\def \rr {\mathbb{R}}
\def \nn {\mathbb{N}}
\def \beqn {\begin{equation}}
\def \eeqn {\end{equation}}
\def \bi {\begin{itemize}}
\def \ei {\end{itemize}}
\def \ben {\begin{enumerate}}
\def \een {\end{enumerate}}
\def \beq {\begin{eqnarray*}}
\def \eeq {\end{eqnarray*}}
\def \beqn {\begin{eqnarray}}
\def \eeqn {\end{eqnarray}}
\def\coz{z}
\def\mz2a{\pabs \coz\pabd^{2\al}}
\newcommand{\ba}{\begin{array}}
\newcommand{\ea}{\end{array}}
\newcommand{\beqa}{\begin{eqnarray}}
\newcommand{\eeqa}{\end{eqnarray}}
\newtheorem{theo}{Theorem}[section]
\newtheorem{lemma}[theo]{Lemma}
\newtheorem{prop}[theo]{Proposition}
\newtheorem{coro}[theo]{Corollary}
\newtheorem{remark}[theo]{Remark}
\begin{document}

\title{On non-topological solutions for planar Liouville Systems of Toda-type}

\author{Arkady Poliakovsky \thanks{Department of Mathematics, Ben-Gurion University of the Negev, P.O.B. 653, Beer-Sheva 84105, Israel. Email: poliakov@math.bgu.ac.il} \quad Gabriella Tarantello \thanks{Dipartimento di Matematica.
Universit\`a degli Studi di Roma "Tor Vergata", Via della Ricerca Scientifica, 00133 Rome, Italy. Email: tarantel@mat.uniroma2.it} 
\thanks{Supported by PRIN09 project: \textit{Nonlinear elliptic problems in the study of vortices and related topics},  PRIN12 project: \textit{Variational and Perturbative Aspects of Nonlinear Differential Problems} and  FIRB project: \textit{Analysis and Beyond.}}}


\maketitle

\begin{abstract}
Motivated by the study of non-abelian Chern Simons vortices of non-topological type in Gauge Field Theory, see e.g. \cite{gud1, gud2},\cite{dunne}, we analyse the solvability of the following (normalised) Liouville-type system in presence of singular sources:

\begin{equation*} 
(1)_\tau \begin{cases}
-\Delta u_1 = e^{u_1} -  \tau  e^{u_2} - 4N\pi \, \delta_0,\\
-\Delta u_2 = e^{u_2} -  \tau  e^{u_1}, \\
\beta_1 = \frac{1}{2\pi} \int_{\mathbb{R}^2} e^{u_1} \; \text{ and } \; \beta_2 = \frac{1}{2\pi} \int_{\mathbb{R}^2} e^{u_2}, 
\end{cases}
\end{equation*}

with $\tau>0$ and $N>0$.\\
We identify \underline{ necessary and sufficient} conditions on the parameter $\tau$ and the "flux" pair: $(\beta_1, \beta_2),$  which ensure the \underline{radial} solvability of $(1)_\tau.$\\
Since for $\tau=\frac{1}{2},$ problem $(1)_\tau$ reduces to the (integrable) 2 X 2 Toda system, in particular we recover the existence result of  \cite{lwy} and \cite{jw}, concerning this case.\\
Our method relies on a blow-up analysis  for solutions of $(1)_\tau$, which (even in the radial setting) takes new turns compared to the single equation case.\\
We mention that our approach permits to handle also the non-symmetric case, where in each of the two equations in $(1)_\tau$, the parameter $\tau$   is replaced  by two different parameters $\tau_1 > 0$ and $\tau_2 > 0$ respectively, and when also the second equation in $(1)_\tau$  includes a Dirac measure supported at the origin.
\end{abstract}

2010 \textit{Mathematics Subject classification:} 35J61, 35K45, 35K57, 35K58.

{\bf Keywords:} Liouville systems, Non-abelian vortices, blow-up analysis.

\section{Introduction}\label{intro}
\setcounter{equation}{0}

In recent years there has been a growing interest towards the understanding of non-abelian vortex configurations supported by Chern-Simons Gauge Field Theories, also in view of their connection to the delicate issue of "monopole confinement". See \cite {gud1, gud2} and references therein.

In this context, a successful way to detect vortices is to identify the BPS-sector of the theory, since in such regime vortex configurations simply correspond  to  (static) solutions of the so called \underline{self-dual equations} of Bogomolnyi type, and  saturate the minimal energy allowed by the system. For this reason, it has been useful to invoke "duality" and  formulate  the theory within the general framework of  $\mathcal{N}=2$ Supersymmetric (SUSY) Field Theory, in this direction see for example: \cite {efgknnov, efgno, gre, gud1, gud2, lmms, sy} for more details . 

We observe that, when the Chern-Simons Lagrangian is taken into account  then the theory can attain 
self-duality only with the help of a \underline{six-order scalar potential field}, see \cite{dunne, tar2, y}, in place of the more familiar quadratic (double-well) potential of the Maxwell-Higgs model, see \cite{jt}.  As a consequence, the system acquires additional vacua states, in the sense that now, the unbroken vacuum must coexist together with broken and partially broken vacua states. In turn, this feature give rise to new classes of self-dual vortices, each supported (asymptotically) by different vacua states.


On the other hand, in the non-abelian setting, it is quite difficult to handle self-dual vortices with mathematical rigour,  since even the reduced first order self-dual equations present serious analytical difficulties yet to be overcome. Nonetheless, it is possible to focus  on special classes of vortices satisfying some useful and physically consistent ansatz,  which permit to formulate the self-dual  Chern-Simons vortex equations into a form suitable for the approach in \cite{jt} to apply.  In this way, one is lead to the so called "Master equations", which govern such class of vortex configurations, and take the form of planar elliptic systems involving exponential nonlinearities and involve Dirac measures supported at the vortex points, see for example \cite{nt, ht, hlty, chls, tar3, y, dunne}. 


By means of the Master equations and their variational structure, it is possible to construct planar \underline{topological} vortices (asymptotically gauge equivalent to the \underline{unbroken} vacuum) by a minimisation procedure, see \cite{y, hlty, chls}. 
Similarly, after the variational approach introduced in \cite {cy, tar1} for the abelian case, it is possible to use variational methods also to establish multiple  \underline{periodic} non-abelian Chern-Simons vortices for various theories,  see e.g. \cite{nt, ht, hlty, chls}.\\

On the contrary,  the study of  planar \underline{non-topological vortices}, asymptotically gauge equivalent to the \underline{broken} vacuum, still remains unsatisfactory, in spite of the rather complete description available for the abelian case, see \cite{sy, cfl, ci, ckl}.  
Indeed, even when all the vortex points are superimposed, say at the origin, and one seeks radially symmetric vortices (about the origin), the corresponding \underline{radial} differential vortex problem turns out to be quite involved.
In this direction, we mention the contributions in \cite{hl1}, \cite{hl2}, \cite {ckl1} and \cite{chl} which provide useful general information about the nature of entire radial vortex solutions, and reveals their rich structure compared with the abelian situation. \\
On the basis of such results and as a way to progress further towards the understanding of  non-topological vortices, here we propose to adopt  the "perturbation"  strategy introduced by Chae-Imanuvilov in \cite{ci} for the abelian case,  see also  \cite{ct1, ct2} for non-abelian settings.

 
Actually,  to carry out such "perturbation" approach, the first important step is to provide rather accurate informations about the solvability of  Liouville - systems involving Dirac measures,  which occur as  "`limiting"' problems in the perturbation argument. 

Interestingly, the relevance of such class of  Liouville systems has emerged already in several other contexts, see \cite {ck2, k1,k2, k3, kl, csw, jow2} and references therein. Their study has concerned mainly the so called "cooperative" case, where all the entries of the coupling  matrix are assumed positive, and we refer to  \cite {ck1, ck2, csw, sw1, sw2, w, lz1, lz2, lz3, pot1, pot2}, for rather complete results about this situation. 


However, for the non-abelian models considered here, the corresponding Liouville systems are of "competitive" - type, and their solvability is far from understood, except for the very special case of the integrable Toda  system. We recall that, the  (m x m)-Toda system is characterised by its coupling matrix being the Cartan matrix of the (gauge) group $SU(m+1)$, and it describes vortices for a relativistic Chern-Simons model, see \cite{dunne}. \\
Lin-Wei-Ye in \cite{lwy} have identified  all the  (conformal) invariances enjoyed by the "singular" Toda system, and used them to obtain a full description of the corresponding solution set and of their linearised problem. The classification result in \cite{lwy} was motivated and inspired by the analogous classification result obtained by Jost-Wang \cite{jow2} for the  "regular"  Toda system, where the Dirac measures were neglected.\\
 Also note that for $m=1$ (corresponding to the abelian case  discussed in \cite{ci}), the result in \cite{lwy} just reduces to the well known classification result for solutions of "singular" Liouville equations obtained in \cite{pt} (see also \cite{cl1, cl2}), and corresponding linearised problem discussed in \cite{dem}.

The complete analysis provided by in \cite{lwy} for the singular Toda system has allowed  Ao-Lin-Wei in \cite{alw1, alw2}  to pursue the "`perturbation" approach of \cite{ci}, and construct non-topological vortices for the Chern-Simons model proposed by Dunne in \cite{dunne}, with any gauge group of rank 2.

The main purpose of this paper is to analyse the solvability of (not necessarily integrable) planar "`singular"' Liouville systems, as they occur in various Chern-Simons vortex problems, see e.g. \cite{gud1, gud2, lmms}, or in other physical context decribed for example in  \cite {k1, k2, k3, kl}, and which include the Toda system as a particular case. \\
In our situation, the systems enjoys only a scale invariance property, and so the admissible "flux-pair" corresponding to a solution is no longer a fixed value (as in the conformal / integrable case) but now it describes a curve, which we will be able to describe completely in the radially symmetric case. 

More precisely, we provide \underline{necessary and sufficient} conditions for the radial solvability of a general singular 2 X 2 Liouville system of Toda type,  in terms of the "flux pair" of a solution (given by certain integral values), which relates to the total energy carried by the corresponding vortex, see Theorem \ref{teoA} for the precise statement. In certain cases, our results are sharp also when the radial assumption is dropped, see Corollary \ref{corA2} . \\
In certain sense, our analysis should be considered  the continuation of that in \cite{pot2}  (see also \cite{pot1}) of  "cooperative" systems.\\

To simplify notation and  to better clarify  the arguments involved, we shall work mainly in the symmetric case, although it will be clear by our discussion how to modify the given arguments in order to treat non-symmetric systems as well. We mention also that, \cite{ft}, \cite{cht} and \cite{ggw} contain some particular cases of our result.\\
A central aspect of our proof is to provide an accurate blow up analysis for solutions of problem $(1)_\tau$, which relies in a crucial way upon the radial symmetric assumption. In fact, to obtain a similar blow-up description for general (non-radial) solutions would be much harder to attain, as we can see already for the integrable Toda system analysed first in \cite{jow1} and more recently in  \cite{lwz}  (see also references therein), or for a degenerate system given by a Cosmic Strings equation discussed in \cite{tar4}. \\
In the next section, we provide the necessary preliminaries and the precise statements of our results. 

\section{ Preliminaries and Statement of the Main Results }\label{main}  
\setcounter{equation}{0}

Motivated by the delicate issue of "`quark confinement"', Gudnason in \cite{gud1,gud2} introduced a non-abelian Chern-Simons model formulated within a  $\mathcal{N}=2$ Supersymmetric (SUSY) Field Theory, with a general gauge group of the type: $G=U(1) \times G'$ allowing solutions with orientational modes. For the model in \cite{gud1, gud2}, the author identifies the BPS-sector of the theory and the corresponding self-dual equations. In particular, when  $G' = SO(2)$ or $G' = U S_p (2)$, Gudnason in \cite{gud1,gud2}
introduced some meaningful physical ansatz on the structure of the vortex solutions, by which  (as in \cite{jt}) the corresponding self-dual equations reduced to the following set of  Master's equations: 
\begin{equation} \label{1.1} 
\begin{cases}
\Delta u = \alpha^2 (e^{u+v}+e^{u-v}-\xi)(e^{u+v}+e^{u-v})+\alpha \beta (e^{u+v}-e^{u-v})^2+ 4 \pi \sum_{j=1}^{n_1} \delta_{q_1,j}\\
\Delta v = \alpha \beta (e^{u+v}+e^{u-v}-\xi)(e^{u+v}-e^{u-v})+ \beta^2 (e^{2(u+v)}-e^{2(u-v)})^2+ 4 \pi \sum_{j=1}^{n_2} \delta_{q_2,j}
\end{cases}
\end{equation}

with $\alpha=\frac{\pi}{k_1}$ and $\beta=\frac{\pi}{k_2}$ and where $k_1$ and $k_2$ are respectively the coupling constants of the  $U(1)$ and the $G'$ part of the gauge field; moreover $\xi>0$ is known as the Fayet-Iliopoulus parameter.

The set of points (repeated with multiplicity):

\begin{equation}\label{12}
S_i = \{ q_{i,j}, j=1,..,n_i \} \quad i=1,2
\end{equation}  

where the Dirac measures are supported, are known as the \underline{vortex points}, and they carry a relevant physical meaning, see \cite{gud1, gud2}. 
In order to avoid singularities in the component $u-v$, we need to require that, 

\begin{equation}\label{13}
n_2 \leq n_1 \, \text{ and } \, S_2 \subset S_1.
\end{equation} 

For more details about the model and the derivation of \eqref{1.1} we refer to \cite{gud1,gud2} and \cite{hlty}.

Next, we observe that we can express \eqref{1.1}, in a form more familiar within the context of self-dual non-abelian Chern-Simons vortices, see \cite{dunne, tar3, y}.

To this purpose, we let,

\begin{equation}\label{14}
\lambda = \left( \xi \frac{\alpha}{2} \right)^2 \, \text{ and }\, k = \frac{\beta}{\alpha} = \frac{k_1}{k_2}
\end{equation}

and we introduce the new unknowns:

$$u_1= u + v \quad,\quad u_2 = u - v$$

in terms of which, the system \eqref{1.1} takes the form:

\begin{equation} \label{15} 
\begin{cases}
\Delta u_1 + \lambda^2 \left(\sum_{i=2}^{2} k_{1,i} e^{u_i} \left(1- \sum_{j=2}^{2} k_{i,j} e^{u_j} \right)\right) = 4 \pi \sum_{j=1}^{N_1} \delta_{p_1,j}\\
\Delta u_2 + \lambda^2 \left(\sum_{i=2}^{2} k_{2,i} e^{u_i} \left(1- \sum_{j=2}^{2} k_{i,j} e^{u_j} \right)\right) = 4 \pi \sum_{j=1}^{N_2} \delta_{p_2,j}
\end{cases}
\end{equation}

with coupling matrix $K=(k_{i,j})$ given as follows:

\begin{equation}\label{16}
K = \frac{1}{2} \left( \begin{array}{cc} 1+k & 1-k\\
                                  1-k & 1+k\\
																	\end{array} \right)
\end{equation}

and (by recalling \eqref{12}):

$$N_1 = n_1+n_2 \quad \text{ and }  \; \{ p_{1,j}, j=1,..,N_1 \}:= S_1 \cup S_2,$$
$$N_2 = n_1-n_2 \quad \text{ and } \; \{ p_{2,j}, j=1,..,N_2 \}:= S_1 \setminus S_2.$$

Observe that, for $\bf{k=3}$, the coupling matrix \eqref{16} coincides exactly with the Cartan matrix of the group SU(3) given as follows:

\begin{equation}\label{17}
K =  \left( \begin{array}{cc} 2 & -1\\
                                  -1 & 2\\
																	\end{array} \right)
\end{equation}

Therefore, in this case,  \eqref{15} reduces to the well known (integrable) 2 X 2 - Toda system, whose solutions describes $SU(3)$-vortices for the relativistic Chern-Simons- model proposed by Dunne \cite{dunne}. More generally, the model in \cite{dunne} is formulated in terms of any general (semi-simple) gauge group $G$,  with $\lambda = \left(\frac{1}{2 \mu}\right)^2$ and $\mu>0$ the Chern-Simons coupling parameter, see also \cite{y, tar3} for additional details. 

In the more general framework of the  $\mathcal{N}=2 -SUSY,$ we mention also the model proposed in \cite{lmms} concerning a non-abelian Chern-Simons theory with "flavors" and gauge group: $U(1)\times SU(N)$, where a system of the type \eqref{15} occurs as the  Master equations governing the corresponding self-dual vortex configurations  with coupling matrix:

\begin{equation}\label{18}
K = \frac{1}{N} \left( \begin{array}{cc} N-1+k & 1-k\\
                                  (N-1)(1-k) & 1+(N-1)k\\
                                																	\end{array} \right), \quad N>1
\end{equation}

where again $k=\frac{k_1}{k_2}$ and $k_1>0$, $k_2>0$ are respectively the coupling constants of the $U(1)$ and the $SU(N)$ part of the gauge fields, and $\lambda = \left(\frac{1}{2 k_1}\right)^2$.

Notice that, in all the given examples, and actually in other examples of physical interest, the corresponding coupling matrix $K$ is characterised  by the following property:

\begin{equation}\label{19}
k_{i,i}>0 \quad \text{ and } \quad k_{1,1}+k_{1,2} = k_{2,1}+k_{2,2} = 1.
\end{equation}

To construct planar vortices, we seek solutions of \eqref{15} in $\mathbb{R}^2$, satisfying the following integrability condition:

\begin{equation}\label{110}
\sum_{i=1}^{2} k_{l,i} e^{u_i} \left(1- \sum_{j=1}^{2} k_{i,j} e^{u_j} \right) \in L^1(\mathbb{R}^2),  \; l=1,2;
\end{equation}

which ensures that, the corresponding vortex admits \underline{ finite energy }, see \cite{dunne, gud1, gud2, lmms}.

By keeping in mind  \eqref{19}, from  \eqref{110} we identify the following suitable boundary conditions, $\text{ as } |x| \to \infty:$

\begin{equation}\label{111}
\text{ \underline{topological}: } \quad\quad  u_i (x) \to 0   \quad i=1,2,
\end{equation}

\begin{equation}\label{112}
\text{ \underline{non-topological}: } \quad \quad u_i (x) \to -\infty   \quad i=1,2,
\end{equation}

\begin{equation}\label{113}
\text{ \underline{mixed type}:}  \begin{array}{c} \quad \quad u_1 (x) \to -\infty \text{ and } u_2 (x) \to - \log{k_{2,2}},\\    
                                             \text{or}\\
                                              \quad \quad          u_2 (x) \to -\infty \text{ and } u_1 (x) \to - \log{k_{1,1}}.
\end{array}																												
\end{equation}

From the physical point of view, we mention that, topological solutions give rise to vortices asymptotically (at infinity) gauge equivalent to the unbroken vacuum state, non-topological solutions give rise to vortices asymptotically gauge equivalent to the broken vacuum state, and finally mixed-type solutions allow for vortices asymptotically gauge equivalent to partially broken vacua, see \cite{dunne}, \cite{y, tar2} for details.\\

To further substantiate the physical pertinence of the boundary conditions specified above, we observe that (when  \eqref{19} holds) every solution of \eqref{15} satisfying either one of the boundary conditions \eqref{111}, \eqref{112} and \eqref{113},  verifies:
 $u_i < 0 \text{ in } \mathbb{R}^2 \; \text{for every} \; i=1,2$,  consistently with the physical applications. \\ 
Furthermore, in the case of interest here, namely:
$$k_{i,i} > 0 \text{ and } k_{i,j} <  0 \text{ with }  i\ne j =1,2,$$
and when $p_{i,l} = 0$ for every $ l=1,...,N_i \; \text{ and } i=1,2,$  then Huang-Lin in \cite{hl1, hl2} have shown that every entire radially symmetric solution of  \eqref{15} must satisfy either one of the boundary conditions above,  and in particular the integrability condition \eqref{110} automatically  holds in this case.\\  

The construction of topological solutions for \eqref{15} has been carried out first by Yang in  \cite{y}, and more recently in \cite{hlty} and \cite{chls} for more general choices of coupling matrices including \eqref{16} and \eqref{18} as particular cases.  We also mention that the existence of multiple \underline{periodic} solutions for \eqref{15} was established first by Nolasco-Tarantello in \cite{nt} for the choice of coupling matrix \eqref{17}, and more recently in \cite {ht},  \cite{hlty} and \cite{chls} for more general choices including  \eqref{16} and \eqref{18} respectively.\\ 

In this paper, we focus on the construction of \underline{non-topological} solutions for \eqref{15}.

To this purpose, we recall what happens for the embedded abelian configurations, corresponding to the situation where,

\begin{equation*}
N=N_1=N_2 \text{ and (after relabelling) } p_{1,j} = p_{2,j} = p_j, \, j=1,..,N,
\end{equation*}
and
$$u_1=u_2=u$$
with $u$ a solution of the \underline{abelian} Chern-Simons vortex equation (cfr. \cite{hkp, jw}) :

\begin{equation}\label{114}
\Delta u + \lambda e^u (1-e^u) = 4 \pi \sum_{j=1}^{N} \delta_{p_j} \quad \text{ in } \mathbb{R}^2,
\end{equation}

(recall \eqref{19}). For a detailed study of \eqref{114} we refer to \cite{tar3}.

In particular, we know that a non-topological abelian vortex will correspond to a solution $u$ of \eqref{114} satisfying: $e^u \in L^1(\mathbb{R}^2)$. To construct such class of solutions, we let:
$$ u_\varepsilon (x) = u \left( \frac{x}{\varepsilon}\right) + 2 \log \frac{1}{\varepsilon}, \; \quad\, \varepsilon>0;$$

and observe that $ u_\varepsilon$ satisfies:

\begin{equation}\label{115}
-\Delta u_\varepsilon = \lambda e^{u_\varepsilon} - \lambda \varepsilon^2 e^{2 u_\varepsilon} - 4 \pi \sum_{j=1}^{N} \delta_{\varepsilon p_j} \quad \text{ in } \mathbb{R}^2,
\end{equation}
and
\begin{equation}\label{115tris}
\int_{\mathbb{R}^2} e^u = \int_{\mathbb{R}^2} e^{u_\varepsilon}.
\end{equation}

Therefore, (at least formally) non-topological solutions of  \eqref{114} can be sought as  "perturbation" of solutions of the following "limiting" problem, obtained  by letting $\varepsilon \to 0$ in \eqref {115}, namely:
 \begin{equation}\label{116}
-\Delta u = \lambda e^{u} - 4 \pi N \delta_{0} \quad \text{ in } \mathbb{R}^2,
\end{equation}
and,
\begin{equation}\label{117}
e^u \in L^1(\mathbb{R}^2). 
\end{equation}

Solutions of \eqref{116}, \eqref{117} are explicitly known (see \eqref{128} below, and  \cite{pt},  \cite{cl1,cl2}), and they were used by Chae-Imanuvilov in \cite{ci} together with suitable "perturbation" techniques to obtain a family of non-topological solutions for \eqref{114}, we refer to \cite{ci} and \cite {tar2} fo details.\\
Actually, the subsequent analysis in \cite{cfl} and \cite{ckl} has provided a rather complete description about the whole set of non-topological solutions for the single equation \eqref{114}.\\



Unfortunately, the analysis of non-topological solutions for the non-abelian vortex system \eqref{15} is far from complete, in spite of the results in \cite{ alw1, alw2, hl1, hl2, ckl1}. 

For example, one we can perform a similar scaling  for a solution $(u_1,u_2)$ of \eqref{15} as follows:
\begin{equation}\label{120bis}
u_{1, \varepsilon} (x) = u_1 \left( \frac{x}{\lambda\varepsilon}\right) + 2 \log \frac{1}{\varepsilon} \quad , \quad u_{2,\varepsilon} (x) = u_2 \left( \frac{x}{\lambda\varepsilon}\right) + 2 \log \frac{1}{\varepsilon}
\end{equation}

and realise that, in order to pursue a "`perturbation"' approach similar to \cite{ci}, it is important to understand the complete solvability of the following  \underline{singular Liouville system}:

\begin{equation} \label{120} 
\begin{cases}
-\Delta u_1 = k_{1,1} e^{u_1} + k_{1,2} e^{u_2} - 4 \pi N_1 \delta_{0}\\
-\Delta u_2 = k_{2,1} e^{u_1} + k_{2,2} e^{u_2} - 4 \pi N_2 \delta_{0}\\
e^{u_1}, e^{u_2} \in L^1 (\mathbb{R}^2)
\end{cases}
\end{equation}

which "formally" we derive  by letting  \ $\varepsilon \to 0$ in the system satisfied by $(u_{1,\varepsilon}, u_{2,\varepsilon})$.\\

The analysis of \eqref{120} is interesting in its own, since similar Liouville systems (in absence of Dirac sources, i.e. $N_1=N_2 = 0$) have emerged in other contexts, such for example in statistical mechanics, see \cite{k1, k2, k3, kl} and  \cite{ck1, ck2}, population dynamics and plasma physics, see \cite{csw, sw1, sw2} and references therein.

More precisely, concerning \eqref{120}, we address the question of identifying the pairs: $(\beta_1, \beta_2)$ for which problem \eqref{120} admits a solution satisfying:

\begin{equation}\label{121}
\beta_1 = \frac{1}{2\pi} \int_{\mathbb{R}^2} e^{u_1} \, dx, \quad \, \quad \beta_2 = \frac{1}{2\pi} \int_{\mathbb{R}^2} e^{u_2} \, dx. 
\end{equation}

This is a natural question to investigate also from the physical point of view, since the values $\beta_1$ and $\beta_2$ indicate the energy level of the corresponding vortex configuration.

We also notice that, in the context of radial solutions, problem \eqref{120}, \eqref{121} can be turned conveniently into a more standard boundary value problem. 

Actually, in the radial setting, we have a complete answer about the question of solvability for the so called \underline{cooperative systems}, even for general $m \times m-systems,$ where the coupling matrix is assumed to satisfy the following:

\begin{equation}\label{122}
K=(k_{i,j})_{i,j=1,..,m} \, \text{ is symmetric, irreducible with non-negative entries.}
\end{equation}

Solutions of cooperative systems in absence of  Dirac measures are always radially symmetric, see \cite{csw}, and they  have been investigated in  \cite{ck1,ck2, csw, lz1, lz2, sw1, sw2, w, jow1, jow2}, also in connection with sharp versions for systems of the Moser-Trudinger inequality, see \cite{w}, \cite{jow1}, or via duality, by invoking  sharp versions of the $log-HLS$ inequality for the Free Energy, (see \cite{csw,sw1, sw2}).\\

More recently, those results have been extended by Poliakovsky-Tarantello in \cite{pot1, pot2}, to  describe the sharp \underline{radial} solvability for "singular" Liouville  systems including the Dirac measures and with coupling matrix $K$ satisfying  \eqref{122}.\\
It should be noticed that in this case, non-radial solutions do occur, as exhibited in \cite{pot1}. 
Furthermore,  \cite{pot1, pot2} cover also the  \underline{degenerate} case, (unlike \cite{csw} and \cite{lz1, lz2}) which includes a class of Liouville-type equations arising (as a degenerate 2x2-system) in the study of self-dual Cosmic Strings, see \cite{pot1, pot2} and \cite{tar4} for details.\\

For problem \eqref{120}, \eqref{121} the results \cite{pot1, pot2} can be summarised as follows:

\begin{theo}[\cite{ck1, ck2, csw, pot2}]\label{teo1} Assume that,

\begin{equation}\label{123a}
k_{i,j}=k_{j,i} \ne 0,\text{ for } i \ne j \in \{ 1,2 \},
\end{equation}

and let $N_i > -1$, $i=1,2$. Then the following conditions: 

\begin{equation}\label{124a}
k_{1,1} \beta_{1}^{2} + k_{2,2} \beta_{2}^{2} + 2 k_{1,2} \beta_{1} \beta_{2} - 4 (N_1 + 1) \beta_1 - 4 (N_2 + 1) \beta_2 = 0
\end{equation}

\begin{equation}\label{124b}
\frac{k_{1,2}}{|k_{1,2}|} (k_{i,i} \beta_i - 4(N_i +1)) < 0, \quad i=1,2
\end{equation}

are \underline{necessary} for the existence of a \underline{radial} solution to \eqref{120}, \eqref{121}.\\

Furthermore, if (Cooperative Systems)

\begin{equation}\label{123b}
k_{i,i}\geq 0 \quad , \quad k_{i,j}=k_{j,i} > 0, \quad i \ne j \in \{ 1,2 \} 
\end{equation}

then \eqref{124a}, \eqref{124b} are also \underline{sufficient} for the radial solvability of \eqref{120}, \eqref{121}.\\
In addition, if $-1 < N \leq 0$ then every solution of \eqref{120}, \eqref{121} is indeed radially symmetric.
\end{theo}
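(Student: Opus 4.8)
The statement collects three essentially independent claims, which I would attack separately: the \emph{necessity} of \eqref{124a}--\eqref{124b}, the \emph{sufficiency} in the cooperative case \eqref{123b}, and the \emph{radial symmetry} assertion for $-1<N\le 0$.

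\textbf{Necessity.} In the radial setting $u_i=u_i(r)$ the system \eqref{120} becomes the ODE $(r u_i')'=-r\sum_j k_{i,j}e^{u_j}$, and I would first record the two asymptotic regimes forced by $e^{u_i}\in L^1$. Near the origin the Dirac source gives $e^{u_i}\sim r^{2N_i}$, integrable exactly when $N_i>-1$; at infinity, integrating the equation over $B_r$ yields the flux relation
\[
r u_i'(r)=2N_i-\sum_j k_{i,j}m_j(r),\qquad m_j(r):=\int_0^r e^{u_j(s)}\,s\,ds\ \to\ \beta_j,
\]
so that $u_i(r)\sim-\gamma_i\log r$ with $\gamma_i:=\sum_j k_{i,j}\beta_j-2N_i$, and $e^{u_i}\in L^1$ near infinity forces $\gamma_i>2$. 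The heart of the matter is the per-component integration by parts $\int_0^\infty r^2 e^{u_i}u_i'\,dr=-2\beta_i$, where $N_i>-1$ kills the boundary term at $0$ and $\gamma_i>2$ kills it at infinity. Substituting the flux relation and using $m_i'=r e^{u_i}$ gives, for $j\neq i$,
\[
\tfrac12 k_{i,i}\beta_i^2+k_{i,j}\int_0^\infty m_i'\,m_j\,dr=2(N_i+1)\beta_i .
\]
Summing the two identities and using $\int_0^\infty(m_1'm_2+m_2'm_1)\,dr=\beta_1\beta_2$ produces exactly the Pohozaev identity \eqref{124a}. For \eqref{124b} I would then note that $\int_0^\infty m_i'm_j\,dr>0$ strictly, so the displayed per-component identity forces $k_{i,j}\big(k_{i,i}\beta_i-4(N_i+1)\big)<0$, i.e. \eqref{124b}.

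\textbf{Sufficiency in the cooperative case.} I would first strip the singularity by writing $u_i=2N_i\log|x|+w_i$, turning \eqref{120} into the regular system $-\Delta w_i=\sum_j k_{i,j}|x|^{2N_j}e^{w_j}$ with mass constraints $\int_{\mathbb{R}^2}|x|^{2N_i}e^{w_i}=2\pi\beta_i$. Under \eqref{123b} the matrix $K$ is symmetric with positive off-diagonal entries, so the system is variational: its solutions are critical points of a free-energy functional and the radial ones correspond to constrained minimizers. Following \cite{csw, pot2} I would set this functional on the appropriate weighted space and invoke the sharp system Moser--Trudinger / $\log$-HLS inequality to show that a minimizer exists precisely when $(\beta_1,\beta_2)$ lies on the curve \eqref{124a} and in the range cut out by \eqref{124b}; the residual scale invariance noted in the Introduction is what collapses the admissible fluxes onto a curve and fixes the Lagrange multiplier to $1$ after the substitution $u_i\mapsto u_i+c_i$. \emph{This existence step is the main obstacle:} one must prove attainment exactly at the threshold given by \eqref{124a}--\eqref{124b}, i.e. rule out both concentration and loss of mass at infinity, which is precisely the sharp-constant analysis of \cite{csw, pot2}.

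\textbf{Radial symmetry for $-1<N\le 0$.} Here I would run the method of moving planes about the origin. The cooperative sign condition $k_{i,j}>0$ makes the linearized system cooperative, so the maximum principle applies to both components simultaneously; the asymptotics recorded in the necessity step allow the procedure to start from infinity, while the mild singularity $N\le 0$ (no singularity when $N=0$) places no obstruction at the origin. Sliding planes in every direction then yields radial symmetry. The borderline $N=0$ is exactly the regular cooperative case of \cite{csw}, and the hypothesis $N\le 0$ cannot be relaxed, since for $N>0$ genuinely non-radial solutions exist, as exhibited in \cite{pot1}.
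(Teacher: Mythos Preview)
Your necessity argument is correct and coincides with the paper's: the per-component identity $\tfrac12 k_{i,i}\beta_i^2+k_{i,j}\int_0^\infty m_i'm_j\,dr=2(N_i+1)\beta_i$ that you obtain is exactly identity \eqref{223} of Lemma~\ref{lem23} evaluated at $r=\infty$ (your $m_i$ is the paper's $f_i$), and the paper likewise deduces \eqref{124a}--\eqref{124b} by letting $r\to\infty$ in \eqref{222}--\eqref{223}. For sufficiency in the cooperative case and for radial symmetry when $-1<N_i\le 0$, the paper gives no argument of its own but simply cites \cite{ck1,ck2,csw,pot2}; your variational sketch and moving-planes outline match the methods of those references.
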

\qed
\begin{remark}\label{rmk11}
Observe that, the case $k_{1,2}=k_{2,1}=0$ is not interesting, as \eqref{120} decouples into two singular Liouville equations of the type \eqref{116}, and it is solvable if and only if   $k_{i,i} \beta_i = 4 (N_i+1)$, $i=1,2$.
\end{remark}

The conditions: \eqref{124a}-\eqref{124b} rely in a crucial way on the scale invariance of problem \eqref{120}, \eqref{121} under the following transformation:

\begin{equation}\label{125}
u_{i,\lambda} (x) = u_i (\lambda x) + 2 \log \lambda, \quad i = 1,2 \text{ and } \lambda > 0;
\end{equation}
in the sense that,  
$$(u_1,u_2) \, \text{ satisfies } \eqref{120}, \eqref{121} \Longleftrightarrow (u_{1,\lambda}, u_{2,\lambda}) \, \text{ satisfies } \eqref{120}, \eqref{121}; \quad \forall \;  \lambda > 0.$$

The following uniqueness result holds:

\begin{theo}[\cite{lz1,pot2}]\label{teo2}
Assume  \eqref{123b}, then for every $(\beta_1,\beta_2)$ satisfying \eqref{124a}, \eqref{124b} there correspond a \underline{unique} radial solution of \eqref{120}, \eqref{121} modulo the transformation \eqref{125}.
\end{theo}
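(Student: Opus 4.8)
The plan is to use the radial symmetry to convert \eqref{120}, \eqref{121} into an autonomous second-order ODE system in which the scaling \eqref{125} is exactly a time translation, so that ``uniqueness modulo \eqref{125}'' becomes uniqueness of the connecting orbit realizing a prescribed flux pair. Concretely, I would set $t=\log|x|$ and $w_i(t)=u_i(e^t)+2t$; the source $-4\pi N_i\delta_0$ is then encoded in the behaviour $w_i(t)=2(N_i+1)t+c_i+o(1)$ as $t\to-\infty$, while for finite $t$ the system becomes $\ddot w_i=-\sum_j k_{ij}e^{w_j}$, $i=1,2$. Under \eqref{123b} all $k_{ij}\ge 0$, so each $w_i$ is strictly concave, hence a single ``bump'' tending to $-\infty$ at both ends. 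One computes $\beta_i=\int_{\mathbb{R}}e^{w_i}\,dt$ and, integrating the $i$-th equation, $\dot w_i(+\infty)=2(N_i+1)-\sum_j k_{ij}\beta_j$; evaluating the conserved Hamiltonian $H=\tfrac12\,\dot w^{\,T}K^{-1}\dot w+\sum_i e^{w_i}$ at $t=\pm\infty$ (when $K$ is invertible) reproduces exactly the Pohozaev relation \eqref{124a}, and the necessary conditions \eqref{124a}, \eqref{124b} of Theorem \ref{teo1} fix the admissible asymptotic slopes.

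Next I would set up the shooting. Writing $u_i=2N_i\log|x|+v_i$ with $v_i$ regular, a radial solution is determined by the pair $(v_1(0),v_2(0))$ (equivalently by the subleading constants $(c_1,c_2)$), and \eqref{125} acts as $(c_1,c_2)\mapsto(c_1+2(N_1+1)s,\,c_2+2(N_2+1)s)$, $s=\log\lambda$, i.e.\ translation along the fixed direction $(N_1+1,N_2+1)$. Since each $\beta_i$ is invariant under \eqref{125}, the flux map descends to a map $\bar\Phi$ from the one-dimensional quotient, parametrized by a transversal coordinate $\sigma$, into the conic \eqref{124a}. The existence half of Theorem \ref{teo1} (from \cite{pot2}) guarantees that $\bar\Phi$ covers the arc of \eqref{124a} cut out by \eqref{124b}, so Theorem \ref{teo2} is equivalent to the \emph{injectivity} of $\bar\Phi$.

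The core, and the step I expect to be hardest, is this injectivity, which I would obtain from the non-degeneracy of the radial linearized operator at every solution, its kernel being spanned solely by the scaling generator $x\cdot\nabla u_i+2$ (that is, $\dot w_i$). Differentiating the family in $\sigma$ produces a Jacobi field $\psi_i=\partial_\sigma w_i$ solving $\ddot\psi_i=-\sum_j k_{ij}e^{w_j}\psi_j$, with $\partial_\sigma\beta_i=\int_{\mathbb{R}}e^{w_i}\psi_i\,dt$. Here the cooperative sign $k_{12}=k_{21}>0$ in \eqref{123b} is decisive: it renders the linearized system cooperative, so a maximum-principle/comparison argument can control the nodal structure of $\psi_i$, rule out any kernel direction beyond the scaling field, and thereby force $(\partial_\sigma\beta_1,\partial_\sigma\beta_2)\ne(0,0)$ throughout the family; a continuation argument, using the behaviour at the endpoints of the admissible $\sigma$-range, then upgrades this immersion to global injectivity. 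The genuine difficulty is precisely this linearized analysis, namely pinning down the sign/nodal behaviour of the Jacobi field and its decay as $t\to\pm\infty$ so that the boundary terms in the comparison identities vanish. An alternative that avoids the ODE analysis is to use \eqref{123b} to pass to the dual free-energy formulation and realize the solution as the unique minimizer of a strictly convex functional modulo dilations, along the log-HLS route of \cite{csw, sw1, sw2}; either way, combining injectivity with the existence part of Theorem \ref{teo1} gives exactly one radial solution for each admissible $(\beta_1,\beta_2)$ modulo \eqref{125}.
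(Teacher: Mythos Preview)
The paper does not actually prove Theorem~\ref{teo2}: it is quoted from \cite{lz1,pot2}, and the only indication given in the text is the sentence immediately following the statement, namely that ``the above uniqueness result is based upon a non-degeneracy property for the linearised problem around a given radial solution, in the sense that (in the radial framework) it admits as only degeneracies those originated by the scale invariance \eqref{125}.'' Your plan is exactly this: reduce to a shooting map on the one-dimensional quotient by dilations and show injectivity via the radial non-degeneracy, i.e.\ that the only element of the kernel of the linearised operator is the scaling field. So at the level of strategy you are aligned with the approach the paper attributes to \cite{lz1,pot2}.

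That said, what you have written is a roadmap, not a proof, and you are candid about this: the step you flag as ``the genuine difficulty'' --- controlling the nodal structure of the Jacobi field $\psi$ and showing $(\partial_\sigma\beta_1,\partial_\sigma\beta_2)\neq(0,0)$ along the whole family --- is precisely the content of the argument in \cite{lz1,pot2}, and it is not supplied here. The cooperativity hypothesis \eqref{123b} does make the linearised system amenable to comparison/monotonicity arguments, but turning this into an actual non-degeneracy proof requires a concrete identity or a Sturm-type oscillation count, together with the decay of $\psi$ at $t\to\pm\infty$; none of this is carried out. Your alternative suggestion (the dual free-energy route via strict convexity modulo dilations, in the spirit of \cite{csw,sw1,sw2}) is a genuinely different line and would bypass the ODE analysis, but it too is only named, not executed. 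In short: right idea, same as the cited approach, but the hard analytic core is still missing.
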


The above uniqueness result is based upon a non-degeneracy property for the linearised problem around a given radial solution, in the sense that (in the radial framework) it admits as only degeneracies those originated by the scale invariance \eqref{125}; we refer to \cite{lz1, pot2} for details. This fact extends what we know to happen for the decoupled system \eqref{120}, with $k_{1,2} = k_{2,1} = 0$.\\

Clearly, Theorem \ref{teo1} and Theorem \ref{teo2} apply to Gudnason' s model (with coupling matrix  \eqref{16} and gauge group $G=U(1)\times G'$), when the strength  $k_2$ of the $G'$-component of the gauge field overpowers the strength  $k_1$ of the $U(1)$-component,  that is: $k= \frac{k_1}{k_2} < 1.$  So, on the basis of the uniqueness (and non-degeneracy) property of radial solutions given by Theorem \ref{teo2}, in this case it should be  possible to use bifurcation techniques in order to obtain non-topological (radial)  solutions for the whole system \eqref{15}.\\

As already observed, in case $N_1>0$ or $N_2>0$, then we no longer expect every solution of \eqref{120}, \eqref{121} to be radially symmetric. Such a symmetry breaking phenomenon already occurs for the single Liouville equation \eqref{116}-\eqref{117}, where every solution $u$ must satisfy:
\begin{equation}\label{128a}
\int_{\mathbb{R}^2} e^{u} \, dx = 4(N+1),
\end{equation}

and (in complex notation) takes the following form:

\begin{equation}\label{128}
u(z) = \log \left[\frac{8(N+1)^2 \mu |z|^{2N}}{\lambda(1+\mu|z^{N+1}+b|^2)^2}\right]
\end{equation}

with  $\mu > 0$ and $b \in \mathbb{C}$ satisfying: $b \ne 0$ if and only if $N \in \mathbb{N}$ (see \cite{pt}). In particular, we see that only for $N \notin \mathbb{N}$, \underline{all} solutions to \eqref{116}-\eqref{117} are radially symmetric.\\

On the other hand, it seems reasonable to expect that \eqref{124a}, \eqref{124b} still provide sharp necessary and sufficient conditions for the solvability of \eqref{120}, \eqref{121}, also beyond the radial situation. 
However, while the Pohozaev-type identity \eqref{124a} remains valid for non-radial solutions of  \eqref{120}-\eqref{121}, as it relies only upon the scaling invariance \eqref{125} and the asymptotic behaviour of  solutions at infinity (see Proposition \ref{pro12}  below), it is less obvious to verify whether the condition \eqref{124b} holds also for non-radial solutions.

More precisely, by letting: 

\begin{equation}\label{129}
\beta_{i}^{\infty} = k_{i,i} \beta_i + k_{i,j} \beta_j, \quad i \ne j \in \{ 1,2 \}
\end{equation}

the following holds:
\begin{prop}\label{pro12}
Let $u_i \in L_{loc}^{1} (\mathbb{R}^2)$, $i=1,2$, and suppose that $(u_1,u_2)$ satisfies \eqref{120}, \eqref{121} (in the sense of distributions) for a given pair $(\beta_1,\beta_2).$ We have:\\
(i) \begin{eqnarray}
\label{130a} &u_i (x) = - (\beta_{i}^{\infty}+2 N_i) \log |x| + O(1), \quad \text{ for } |x| \geq 1\\ 
\label{130b} &r \partial_r u_i (x) \to - (\beta_{i}^{\infty}+2 N_i), \quad \partial_\theta u_i (x) \to 0 \quad \text{ as } r= |x| \to +\infty
\end{eqnarray}
with $\beta_{i}^{\infty}$ given in \eqref{129}, $i=1,2;$ and $(r,\theta)$ the polar coordinates in $\mathbb{R}^2$.\\
In particular, there holds:
\begin{equation}\label{132}
\beta_{i}^{\infty} = k_{i,i} \beta_i + k_{i,j} \beta_j > 2(N_i +1), \quad \forall i \ne j \in \{ 1,2 \}.
\end{equation}
(ii) If $K=(k_{i,j})_{i,j=1,2}$ satisfies:
\begin{equation}\label{133}
k_{1,2} \cdot k_{2,1} \geq 0,
\end{equation}
then the pair $(\beta_1,\beta_2)$ verifies the following identity:
\begin{equation}\label{134}
k_{1,1} |k_{2,1}| \beta_{1}^{2} + k_{2,2} |k_{1,2}| \beta_{2}^{2} +  2 k_{1,2} |k_{2,1}| \beta_{1} \beta_{2} - 4 (N_1 + 1) |k_{2,1}| \beta_1 - 4 (N_2 + 1) |k_{1,2}| \beta_2 = 0.
\end{equation}
\end{prop}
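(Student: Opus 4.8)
The plan is to derive the integral identity \eqref{134} from the asymptotic information in part (i) via a Pohozaev-type argument tuned to the (possibly non-symmetric) coupling. For part (i), I would exploit that $e^{u_1},e^{u_2}\in L^1(\mathbb{R}^2)$ implies the smooth source $G_i:=k_{i,1}e^{u_1}+k_{i,2}e^{u_2}$ lies in $L^1(\mathbb{R}^2)$ with $\int_{\mathbb{R}^2}G_i\,dx=2\pi\beta_i^\infty$, $\beta_i^\infty$ as in \eqref{129}. Introducing the renormalised Newtonian potential
\[
w_i(x)=\frac{1}{2\pi}\int_{\mathbb{R}^2}\log\frac{|y|+1}{|x-y|}\,G_i(y)\,dy ,
\]
which solves $-\Delta w_i=G_i$, one has that $u_i-2N_i\log|x|-w_i$ is harmonic on all of $\mathbb{R}^2$. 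The first substantial point is to upgrade the local integrability of $u_i$ to a two-sided logarithmic bound (a Brezis--Merle / Chen--Li type estimate, using $e^{u_j}\in L^1$), so that this harmonic function grows at most logarithmically and is therefore constant by Liouville's theorem. This must be carried out without any radial assumption.

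Once $u_i=2N_i\log|x|+w_i+\mathrm{const}$, I would split $\int_{\mathbb{R}^2}\log\frac{|x-y|}{|x|}\,G_i(y)\,dy$ over $\{|y|<|x|/2\}$, $\{|x|/2<|y|<2|x|\}$, $\{|y|>2|x|\}$ and use $\int_{|y|>R}|G_i|\to0$ to get $w_i(x)=-\beta_i^\infty\log|x|+O(1)$; hence \eqref{130a} holds with logarithmic rate equal to the total mass $\tfrac{1}{2\pi}\int_{\mathbb{R}^2}(-\Delta u_i)\,dx=\beta_i^\infty-2N_i$. Differentiating the representation and estimating by the same splitting yields the radial and angular limits \eqref{130b}. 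Finally \eqref{132} is \emph{forced} by integrability at infinity: from \eqref{130a}, $e^{u_i}\sim|x|^{-(\beta_i^\infty-2N_i)}$, and $e^{u_i}\in L^1$ near infinity demands the exponent exceed $2$, i.e. $\beta_i^\infty>2(N_i+1)$.

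For part (ii) the crucial device is to symmetrise the coupling. Under \eqref{133} (and with $k_{1,2},k_{2,1}\neq0$; if one vanishes the system decouples into a single Liouville equation) I set $d_1=|k_{2,1}|$, $d_2=|k_{1,2}|$, $D=\mathrm{diag}(d_1,d_2)$, so that $DK$ is \emph{symmetric}, i.e. $d_1k_{1,2}=d_2k_{2,1}$ — precisely what \eqref{133} permits. The point is that the naive multiplier $x\cdot\nabla u_i$ produces bulk cross terms $\int(x\cdot\nabla u_1)e^{u_2}$ that cannot be read off from the masses. Instead I would test \eqref{120} with the combination $\sum_j(DK^{-1})_{ij}\,(x\cdot\nabla u_j)$, chosen so that the nonlinearity collapses to a pure divergence: since $\sum_i(DK^{-1})_{ij}G_i=d_j e^{u_j}$, the right-hand side becomes $\sum_j d_j\,x\cdot\nabla e^{u_j}=\sum_j d_j\big(\mathrm{div}(x\,e^{u_j})-2e^{u_j}\big)$, which integrates to $-4\pi\sum_j d_j\beta_j$ (the boundary contributions vanishing by the rates from part (i)). Thus the entire quadratic dependence on $(\beta_1,\beta_2)$ is transferred to the boundary terms at infinity.

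On the left, since $A:=DK^{-1}$ is symmetric, the polarised Pohozaev--Rellich identity in $\mathbb{R}^2$ (where the bulk Dirichlet term drops) converts $\sum_{i,j}A_{ij}\int_{B_R\setminus B_\varepsilon}(x\cdot\nabla u_j)(-\Delta u_i)$ into boundary integrals over $\partial B_R$ and $\partial B_\varepsilon$. With $\gamma_i:=\beta_i^\infty-2N_i$ and $r\partial_r u_i\to-\gamma_i$, $\partial_\theta u_i\to0$ from \eqref{130b}, the outer boundary contributes $-\pi\sum_{i,j}A_{ij}\gamma_i\gamma_j$, while the inner one, governed by $u_i\sim2N_i\log|x|$, contributes $4\pi\sum_{i,j}A_{ij}N_iN_j$ as $\varepsilon\to0$. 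Equating sides and substituting $\gamma=K\beta-2N$, the relation $K^{T}D=DK$ makes the inverse-matrix contributions and the $N^{T}AN$ terms cancel, leaving exactly $\sum_{i,j}(DK)_{ij}\beta_i\beta_j-4\sum_i d_i(N_i+1)\beta_i=0$, which is \eqref{134}. I expect the main obstacle to be twofold: in part (i) the a priori control that rules out spurious logarithmic growth of $u_i$ and forces the harmonic remainder to be constant; and in part (ii) the rigorous passage to the limits $\varepsilon\to0$, $R\to\infty$, which rests on the uniform convergence in \eqref{130b} and the sharp rates $\gamma_i>2$, $N_i>-1$ guaranteeing that the unwanted boundary terms vanish — the algebraic heart being the choice of multiplier $DK^{-1}$, which is where \eqref{133} is genuinely used.
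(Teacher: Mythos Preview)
Your treatment of part~(i) coincides with the paper's: after passing to the regular part $v_i=u_i-2N_i\log|x|$, the paper too invokes the Chen--Li potential-theoretic argument to get $v_i^+\in L^\infty$, the representation via the Newtonian potential, the logarithmic asymptotics, and then reads off $\beta_i^\infty>2(N_i+1)$ from integrability at infinity.

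For part~(ii) your route and the paper's differ in organisation, though both rest on the same observation that $k_{1,2}k_{2,1}\ge0$ permits symmetrisation. The paper does \emph{not} introduce a matrix multiplier: instead it makes the elementary substitution $\hat v_i=v_i+\log|k_{j,i}|$ ($i\ne j$), which converts the non-symmetric system into one with the symmetric coupling matrix having entries $k_{1,1}/|k_{2,1}|$, $k_{2,2}/|k_{1,2}|$ on the diagonal and $\pm1$ off it; the Pohozaev argument is then carried out once, in the symmetric setting, on the \emph{regular} functions $v_i$, so no inner boundary at $\varepsilon$ ever appears. Your approach with the multiplier $A=DK^{-1}$ is correct and the algebra you sketch (using $K^TD=DK$ to collapse $\gamma^TA\gamma-4N^TAN$ to $\beta^TDK\beta-4N^TD\beta$) does yield \eqref{134}; but it is heavier, since you must control the $\partial B_\varepsilon$ contribution via $u_i\sim2N_i\log|x|$, and you need $K$ invertible even to write down the multiplier.

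That last point is a genuine gap. The proposition covers the degenerate cooperative case $k_{i,j}>0$ for all $i,j$ with $\det K=0$, where solutions exist (the system collapses to a single weighted Liouville equation via $v_2=(k_{1,2}/k_{1,1})v_1$) but your $K^{-1}$ is unavailable. The paper handles this separately: a Pohozaev identity on the scalar equation gives $\beta^2=4(N_1+1)k_{1,1}\beta_1+4(N_2+1)k_{1,1}\beta_2$ with $\beta=k_{1,1}\beta_1+k_{1,2}\beta_2$, and one checks directly that under $k_{1,1}k_{2,2}=k_{1,2}^2$ this is exactly \eqref{134}. You should either add this case, or note (as the paper does in Lemma~\ref{lem1}) that $\det K=0$ is compatible with solvability only in the fully cooperative situation, and treat it there.
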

\qed




Clearly \eqref{134} reduces to \eqref{124a} when $K$ is symmetric, and in fact in such case, Proposition \ref{pro12} is essentially established in \cite{ck1, ck2,  csw}. For completeness we shall indicate its proof in the following section.\\
Furthermore, if one of the off diagonal entries of $K$ vanishes, (so that  \eqref{133} holds with equal sign), then \eqref{134} simply gives back what we already know about the single Liouville equation  \eqref{116}. 
Actually in this case, one is left to analyse a single planar equation, which can be casted as a weighted Liouville equation and in this way it can be handled as in \cite{lin}, \cite{pot1}, \cite{det}.\\

So the true interesting case to investigate occurs when $k_{1,2} \cdot k_{2,1} >0.$\\

\begin{remark}\label{rmk3}
Condition \eqref{132} does not appear for the  \underline{cooperative} system \eqref{123b}, since in this case it is a consequence of \eqref{124a} and \eqref{124b}. Indeed if by contradiction we assume for example that,

$$k_{1,1} \beta_1 + k_{1,2} \beta_2 \leq 2(N_1 +1),$$

then $2 k_{1,1} \beta_{1}^{2} + 2 k_{1,2} \beta_{1} \beta_2 \leq 4(N_1 +1) \beta_1$ and from \eqref{124a} we see that necessarily $k_{2,2}\beta_2 \geq 4(N_2 +1),$ in contradiction with \eqref{124b}.
\end{remark}








The main focus of this paper is to investigate the solvability of \eqref{120}-\eqref{121} in the more delicate "competitive" situation, where we assume:

\begin{equation}\label{134bis}
k_{i,i}>0 \quad \text{ and } \quad k_{i,j}<0 \quad \text{ for } i \ne j \in \{ 1,2\}
\end{equation}

as it occurs for \eqref{16} when $k=\frac{k_1}{k_2}>1$.\\
In particular \eqref{134bis} covers the 2  X  2 -Toda system  \eqref{17}, which is the \underline{only} situation where, so far,  we can describe  the whole solution set of \eqref{120}. Indeed, it has been shown in \cite{lwy}, that in this case the system gains full conformal invariance and, in analogy with the single Liouville equation \eqref{116}, (cfr. \cite{pt}),  it is possible to identify explicitly the corresponding solutions. In particular, we may conclude the following about the values  $\beta_1$ and $\beta_2$ in \eqref{121}:

\begin{equation}\label{134*}
 \eqref{120}-\eqref{121} \text {with K in  \eqref{17} is solvable if and only if }  \beta_1=\beta_2= 2(N_1 + N_2 + 2). 
\end{equation}

We refer to  \cite {jow2} and  \cite{lwy} for a detailed description of the   Toda system and its relevant conserved quantities.\\

Under the assumption \eqref{134bis}, the necessary condition \eqref{132} plays a central role towards the solvability of \eqref{120}-\eqref{121} and it implies the following:  

 \begin{coro}\label{cor2a}
If $k_{i,j}\leq 0$ for $i \ne j \in \{ 1,2 \}$, then
\begin{equation}\label{136}
k_{i,i}>0, \; \ i \in \{ 1,2 \} \quad \text{ and } \quad det\,( K )> 0,
\end{equation}
are \underline{necessary} conditions for the solvability of \eqref{120}, \eqref{121}.
In particular in the symmetric case, the solvability of \eqref{120}, \eqref{121} requires that $K$ is \underline{strictly positive definite}.
\end{coro}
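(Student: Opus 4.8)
The plan is to deduce everything from the necessary condition \eqref{132} of Proposition \ref{pro12}, combined with the elementary fact that any solution of \eqref{120}, \eqref{121} has $\beta_i>0$ for $i=1,2$ (indeed $e^{u_i}>0$ pointwise and $e^{u_i}\in L^1(\mathbb{R}^2)$, so $\beta_i=\frac{1}{2\pi}\int_{\mathbb{R}^2}e^{u_i}\,dx>0$). Under the standing hypothesis $k_{1,2}\leq 0$ and $k_{2,1}\leq 0$, writing $k_{i,j}=-|k_{i,j}|$ for the off-diagonal entries, condition \eqref{132} becomes
\[
k_{1,1}\beta_1-|k_{1,2}|\beta_2 > 2(N_1+1)>0,\qquad k_{2,2}\beta_2-|k_{2,1}|\beta_1 > 2(N_2+1)>0,
\]
recalling $N_i>-1$. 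These two inequalities carry all the information needed.

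First I would establish the diagonal positivity. From the first inequality, $k_{1,1}\beta_1>|k_{1,2}|\beta_2\geq 0$; since $\beta_1>0$ this forces $k_{1,1}>0$. Symmetrically, the second inequality gives $k_{2,2}\beta_2>|k_{2,1}|\beta_1\geq 0$, whence $k_{2,2}>0$. This yields the first assertion in \eqref{136}.

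Next I would obtain the determinant condition by multiplying the two inequalities. Both left-hand members $k_{1,1}\beta_1$ and $k_{2,2}\beta_2$ are strictly positive, while $|k_{1,2}|\beta_2$ and $|k_{2,1}|\beta_1$ are nonnegative; and from $a>b\geq 0$, $c>d\geq 0$ (hence $a,c>0$) one has $ac>bd$, since $ac-bd=c(a-b)+b(c-d)>0$. Applying this with $a=k_{1,1}\beta_1$, $b=|k_{1,2}|\beta_2$, $c=k_{2,2}\beta_2$, $d=|k_{2,1}|\beta_1$, I get
\[
k_{1,1}k_{2,2}\,\beta_1\beta_2 > |k_{1,2}|\,|k_{2,1}|\,\beta_1\beta_2 .
\]
Dividing by $\beta_1\beta_2>0$ and using $|k_{1,2}|\,|k_{2,1}|=k_{1,2}k_{2,1}$ (a product of two nonpositive numbers), I conclude $\det(K)=k_{1,1}k_{2,2}-k_{1,2}k_{2,1}>0$, which is the second assertion in \eqref{136}. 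Finally, in the symmetric case $k_{1,2}=k_{2,1}$, the pair of conditions $k_{1,1}>0$ and $\det(K)>0$ is precisely Sylvester's criterion for a symmetric $2\times 2$ matrix, so $K$ is strictly positive definite.

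The whole argument is a short manipulation once \eqref{132} is available, so there is no genuine obstacle; the only point requiring care is tracking strict versus non-strict inequalities through the multiplication step, to guarantee that the resulting determinant inequality is \emph{strict}, and correctly translating the sign hypothesis $k_{i,j}\leq 0$ into the absolute-value form used above.
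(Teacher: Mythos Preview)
Your proof is correct and follows essentially the same approach as the paper: both derive the conclusion directly from the necessary condition \eqref{132} of Proposition~\ref{pro12} via elementary algebra. The only cosmetic difference is in the determinant step, where the paper takes a linear combination of the two inequalities (multiplying the first by $k_{2,2}$, the second by $|k_{1,2}|$, and adding) to obtain the sharper relation $(\det K)\beta_1 > 2(N_1+1)k_{2,2}+2(N_2+1)|k_{1,2}|$, while you multiply the inequalities directly; both manipulations yield $\det K>0$ immediately.
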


From \eqref{136}, we see in particular that \eqref{134bis} and \eqref{123b} are truly complementary and fully account for \eqref{123a}.\\
In order to proceed further, we observe that under the assumption \eqref{134} we can make an useful normalisation, simply by setting:

\begin{equation}\label{137}
u_i (x) = v_i (x) + 2 N_i \log |x| - \log k_{i,i}, \quad i=1,2;
\end{equation}

and in terms of the new unknowns $(v_1,v_2)$, we obtain the following problem:

\begin{equation} \label{138} 
\begin{cases}
-\Delta v_1 = |x|^{2N_1} e^{v_1} -  \tau_1 |x|^{2N_2} e^{v_2}\\
-\Delta v_2 = |x|^{2N_2} e^{v_2} -  \tau_2 |x|^{2N_1} e^{v_1}\\
\frac{1}{2\pi} \int_{\mathbb{R}^2} |x|^{2N_i} e^{v_i} = \beta_i k_{i,i}, \quad i=1,2
\end{cases}
\end{equation}

with,

\begin{equation}\label{139}
\tau_1= -\frac{k_{1,2}}{k_{2,2}}>0 \quad \text{ and } \quad \tau_2= -\frac{k_{2,1}}{k_{1,1}}>0.
\end{equation}

\begin{remark}\label{rmk4}
For the coupling matrix given in \eqref{16}, problem \eqref{138} occurs with, 

\begin{equation}\label{139bis}
\tau_1= \tau_2= \frac{k-1}{k+1}>0, \;  \text{ for } k>1.
\end{equation}

In particular, the singular \underline{Toda system} (where $k=3$) leads to \eqref{138} with,

\begin{equation}\label{140}
\tau_1= \tau_2= \frac{1}{2} \quad \text{ and } \quad \frac{1}{2\pi} \int_{\mathbb{R}^2} |x|^{2N_1} e^{v_1} = \frac{1}{2\pi} \int_{\mathbb{R}^2} |x|^{2N_2} e^{v_2} = 4(N_1+N_2+2),
\end{equation}
see \eqref{134*} and \eqref{138}.
\end{remark}

To simplify notations and to minimise technicalities, in this paper we shall focus on the study of  \eqref{138} in the \underline{symmetric} case, where we assume:

\begin{equation}\label{141}
\tau_1= \tau_2 := \tau > 0,
\end{equation}

and we shall give indications on how to deal with the non-symmetric case: $\tau_1 \ne \tau_2$ and $\tau_1\tau_2>0$, which will be discussed in details in a forthcoming paper.

For the original system \eqref{120}, the validity of \eqref{141} amounts to assume the following stronger version of \eqref{134}:

\begin{equation}\label{142}
k_{i,i}>0 \quad \text{ and } \quad \frac{k_{i,j}}{k_{j,j}} = \frac{k_{j,i}}{k_{i,i}} \quad \text{ for } \ i \ne j \in \{ 1,2 \}.
\end{equation}

We mention that, a system of the type \eqref{120} for which \eqref{142} holds, is referred to as a \underline{collaborating system}, see \cite{sw1, sw2}.

Since our original motivation was to analyse Gudnason' s model, w.l.o.g. we can further assume $N_1 \geq N_2$; and again to reduce useless technicalities, we shall specify the following:

\begin{equation}\label{143}
N_1=N>0 \quad \text{ and } \quad N_2=0
\end{equation}

Therefore, from now on we shall focus on the solvability of the following problem:

\begin{equation*} 
(P)_\tau \begin{cases}
-\Delta v_1 = |x|^{2N} e^{v_1} -  \tau  e^{v_2}\\
-\Delta v_2 = e^{v_2} -  \tau  |x|^{2N} e^{v_1}\\
\beta_1 = \frac{1}{2\pi} \int_{\mathbb{R}^2} |x|^{2N} e^{v_1}, \;  \; \beta_2 = \frac{1}{2\pi} \int_{\mathbb{R}^2} e^{v_2}, 
\end{cases}
\end{equation*}

with $\tau>0$ and $N>0$.

According to the discussion above, already we may claim the following about the solvability of $(P)_\tau$:

\begin{coro}\label{cor2}
If $\tau>0$ and $ \beta_i > 0, \; i=1,2,$  then the following conditions are \underline{necessary} for the solvability of $(P)_\tau$:

\begin{eqnarray}
\label{144} &\tau \in (0,1)&\\
\label{145} &\beta_{1}^{2} + \beta_{2}^{2} - 2 \tau \beta_{1} \beta_{2} - 4(N+1) \beta_1 - 4 \beta_2 = 0,&\\
\label{146} &\beta_1 - \tau \beta_2 > 2(N+1) \quad \text{ and } \quad \; \beta_2 - \tau \beta_1 > 2.&
\end{eqnarray}
In addition the condition:
\begin{equation}\label{147}
\beta_1>4(N+1) \quad  \text{ and  } \quad \beta_2 >4, 
\end{equation}
is \underline{necessary} for the existence of a \underline{radial} solution.
\end{coro}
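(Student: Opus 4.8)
The plan is to recognize $(P)_\tau$ as a particular instance of the general singular system \eqref{120}--\eqref{121} and then to read off all four assertions from Proposition \ref{pro12}, Corollary \ref{cor2a} and Theorem \ref{teo1}. First I would absorb the weight $|x|^{2N}$ into a Dirac source: setting $w_1 = v_1 + 2N\log|x|$ and keeping $v_2$ unchanged, one has $|x|^{2N}e^{v_1} = e^{w_1}$ and $-\Delta w_1 = -\Delta v_1 - 4\pi N\,\delta_0$, so that $(w_1,v_2)$ solves \eqref{120} with the symmetric coupling matrix
\[
\hat K = \begin{pmatrix} 1 & -\tau \\ -\tau & 1 \end{pmatrix}, \qquad N_1 = N,\quad N_2 = 0,
\]
and with the very same flux pair $(\beta_1,\beta_2)$ appearing in $(P)_\tau$, since $\frac{1}{2\pi}\int_{\mathbb{R}^2} e^{w_1} = \frac{1}{2\pi}\int_{\mathbb{R}^2}|x|^{2N}e^{v_1} = \beta_1$ and $\frac{1}{2\pi}\int_{\mathbb{R}^2} e^{v_2} = \beta_2$. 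The off-diagonal entries satisfy $k_{1,2}=k_{2,1}=-\tau\ne 0$ and $k_{1,2}k_{2,1}=\tau^2>0$, so hypotheses \eqref{123a} and \eqref{133} both hold and all the quoted results apply.

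With this identification, \eqref{145} is simply the Pohozaev-type identity \eqref{134} of Proposition \ref{pro12}(ii) written for $\hat K$: using $|k_{1,2}|=|k_{2,1}|=\tau$ and $N_1=N$, $N_2=0$, identity \eqref{134} becomes $\tau\beta_1^2+\tau\beta_2^2-2\tau^2\beta_1\beta_2-4(N+1)\tau\beta_1-4\tau\beta_2=0$, and dividing by $\tau>0$ gives exactly \eqref{145}. Likewise \eqref{146} is the necessary condition \eqref{132} of Proposition \ref{pro12}(i) for the two indices: $\beta_{1}^{\infty}=\beta_1-\tau\beta_2>2(N_1+1)=2(N+1)$ and $\beta_{2}^{\infty}=\beta_2-\tau\beta_1>2(N_2+1)=2$. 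I stress that both hold for arbitrary, not necessarily radial, solutions, since Proposition \ref{pro12} makes no symmetry assumption.

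For \eqref{144} I would argue directly from \eqref{146}: the two inequalities give $\beta_1>\tau\beta_2\ge 0$ and $\beta_2>\tau\beta_1\ge 0$; multiplying them (all quantities being nonnegative, as $\beta_i>0$ and $\tau>0$) yields $\beta_1\beta_2>\tau^2\beta_1\beta_2$, hence $(1-\tau^2)\beta_1\beta_2>0$ and therefore $\tau\in(0,1)$. Equivalently, one may invoke Corollary \ref{cor2a}, which, since $k_{1,2}=k_{2,1}=-\tau\le 0$, forces $\det\hat K=1-\tau^2>0$ for solvability. Finally, the radial statement \eqref{147} follows from the necessary condition \eqref{124b} of Theorem \ref{teo1}, applicable because $N_1,N_2>-1$ and $k_{1,2}=k_{2,1}\ne 0$: here $k_{1,2}/|k_{1,2}|=-1$, so \eqref{124b} reads $-(k_{i,i}\beta_i-4(N_i+1))<0$, i.e. $\beta_1>4(N+1)$ and $\beta_2>4$.

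The argument is therefore purely a matter of specializing established identities, and no genuine analytic obstacle remains at this stage. The only points requiring care are bookkeeping ones: verifying that the flux pair $(\beta_1,\beta_2)$ of $(P)_\tau$ coincides exactly with that of the reduced system \eqref{120}, so that no hidden rescaling of the $\beta_i$ is introduced in passing from the weight $|x|^{2N}$ to the Dirac mass; and tracking the sign $k_{1,2}/|k_{1,2}|=-1$ correctly when reading off \eqref{124b}, since it is precisely this sign that converts the abstract condition into the sharp lower bounds $\beta_1>4(N+1)$, $\beta_2>4$. The substantive analytic content — the asymptotic expansion \eqref{130a}--\eqref{130b} underlying \eqref{132}, and the Pohozaev identity — is imported from Proposition \ref{pro12}, whose proof is deferred to the next section.
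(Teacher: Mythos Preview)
Your proposal is correct and follows precisely the approach the paper intends: the corollary is stated without proof in the paper because it is a direct specialization of Proposition~\ref{pro12}, Corollary~\ref{cor2a}, and Theorem~\ref{teo1} to the coupling matrix $\hat K=\begin{pmatrix}1&-\tau\\-\tau&1\end{pmatrix}$ with $N_1=N$, $N_2=0$, exactly as you carry out. Your bookkeeping on the fluxes, the sign $k_{1,2}/|k_{1,2}|=-1$ in \eqref{124b}, and the derivation of $\tau\in(0,1)$ from \eqref{146} (or equivalently from $\det\hat K>0$) are all accurate.
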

\qed

In contrast to the case where $\tau<0$ (see part b) of Theorem \ref{teo1}), we do not expect in general that conditions \eqref{144}-\eqref{146} (and \eqref{147}) are also sufficient for the (radial) solvability of $(P)_\tau$. 

Indeed, it suffices to recall what happens for the Toda system, where by \eqref{140} and \eqref{143} we have:  
\begin{equation}\label{147*}
\text{ if } \tau=\frac12  \text{ then } \beta_1=\beta_2=4(N+2), 
\end{equation}
and we see clearly that \eqref{147*} is a much more stringent condition than \eqref{144}-\eqref{147}.\\

On the other hand, by virtue of part a) in Theorem \ref{teo1}, which  applies to $(P)_\tau$, and by keeping in mind part b) (which applies to $(P)_\tau$ only when $\tau<0$), by way of continuity, we expect that \eqref{145} and \eqref{147} should still provide necessary and sufficient conditions for the \underline{radial} solvability of $(P)_\tau$, when $\tau>0$ is sufficiently small.

We establish this fact and prove the following:

\begin{theo}\label{teo3}
Let $N>0$, then for any,

\begin{equation}\label{148}
0<\tau\leq \frac{1}{N+1 + \sqrt{(N+1)^2+4}} := \tau_0 (N)
\end{equation}

conditions \eqref{145} and \eqref{147} are necessary and sufficient for the radial solvability of $(P)_\tau$.
\end{theo}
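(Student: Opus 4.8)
Necessity of \eqref{145} and \eqref{147} is already contained in Corollary \ref{cor2}, so the whole content of the theorem is the \emph{sufficiency}. The first thing I would do is describe the target set geometrically. For $\tau\in(0,1)$ the quadratic form in \eqref{145} is positive definite, so \eqref{145} is an ellipse, and the constraints \eqref{147} cut out an arc of it joining the two boundary points
\begin{equation*}
P_1=\big(4(N+1),\,4+8\tau(N+1)\big)\quad\text{and}\quad P_2=\big(4(N+1)+8\tau,\,4\big),
\end{equation*}
obtained by intersecting \eqref{145} with $\beta_1=4(N+1)$ and with $\beta_2=4$ respectively. A direct check shows that along this arc the remaining necessary inequalities \eqref{146} are automatically satisfied precisely when $4\tau^2+2(N+1)\tau-1\le 0$, i.e. exactly when $\tau\le\tau_0(N)$ as in \eqref{148}; the binding condition is $\beta_2-\tau\beta_1\ge 2$ at the endpoint $P_2$, which holds with equality at $\tau=\tau_0(N)$. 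This explains the role of the threshold and tells me that for $\tau\le\tau_0(N)$ I must realise every point of the \emph{open} arc from $P_1$ to $P_2$ by a radial solution.

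For the construction I would pass to the radial ODE system and use a shooting argument. Writing $v_i=v_i(r)$, integrating $(rv_i')'$ and setting $m_1(r)=\int_0^r s^{2N+1}e^{v_1}\,ds$ and $m_2(r)=\int_0^r s\,e^{v_2}\,ds$, one has $rv_1'=-m_1+\tau m_2$ and $rv_2'=-m_2+\tau m_1$, so that $\beta_i=m_i(\infty)$. I would solve the initial value problem with data $v_i(0)=a_i$, $v_i'(0)=0$ (regularity at the origin forces $v_i'(0)=0$ since the right-hand sides are bounded near $0$), and then exploit the scale invariance \eqref{125}, which on the initial data acts by $a_1\mapsto a_1+(2N+2)\log\lambda$, $a_2\mapsto a_2+2\log\lambda$ and leaves $(\beta_1,\beta_2)$ unchanged. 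Hence the flux depends on a single transverse parameter, say $a:=a_1-(N+1)a_2$, and (normalising $a_2=0$ and letting $t=a_1$) I obtain a curve $\gamma(t)=(\beta_1(t),\beta_2(t))$ which, by the Pohozaev identity \eqref{145} valid for every finite-mass solution, lies on the ellipse. The analytic preliminaries here are global existence in $r$, finiteness of the masses $m_i(\infty)$, and the asymptotics \eqref{130a}--\eqref{130b} of Proposition \ref{pro12}; it is exactly the restriction $\tau\le\tau_0(N)$ that keeps the decay rates required by \eqref{146} compatible along the whole family, so that the shooting solutions are genuine finite-mass solutions of $(P)_\tau$.

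To conclude surjectivity I would combine continuity with a limiting analysis at the two ends $t\to\pm\infty$. Continuous dependence on the shooting parameter makes $\gamma$ a continuous arc on the ellipse, so by connectedness it suffices to show that its two endpoints are exactly $P_1$ and $P_2$; the intermediate value theorem on the ellipse then forces $\gamma$ to sweep out the entire open arc between them, which for $\tau\le\tau_0(N)$ is precisely the admissible set described by \eqref{147}. The endpoints correspond to the regime in which one of the two components decouples, in the limit, into a standalone singular Liouville equation carrying its minimal admissible mass $4(N+1)$ or $4$ (the $\tau=0$ values), while the other component adjusts through \eqref{145}. Identifying these limits is the heart of the matter and is where I expect the main obstacle: as $t\to\pm\infty$ the solutions concentrate, and one must carry out a careful blow-up analysis to show that mass is neither lost nor distributed into a spurious profile, and that the limiting fluxes are exactly the coordinates of $P_1$ and $P_2$. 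This step relies crucially on the radial symmetry, which reduces the blow-up to an ODE phenomenon and lets one track $m_1,m_2$ monotonically, and on the competitive coupling being damped by $\tau\le\tau_0(N)$; controlling the competitive interaction so as to exclude oscillation and partial blow-up, and pinning down the exact limiting masses, is the decisive and most delicate part of the argument.
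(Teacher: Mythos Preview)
Your outline follows the paper's strategy: one-parameter shooting (via scale invariance), continuity of $(\beta_1(\alpha),\beta_2(\alpha))$ on the Pohozaev ellipse, and identification of the endpoints by blow-up as $\alpha\to\pm\infty$. Two points in your description are off and would mislead you when filling in details.

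First, the restriction $\tau\le\tau_0(N)$ plays no role in making the shooting solutions global with finite mass. For every $\tau\in(0,1)$ and every initial datum, the pointwise identity \eqref{39} gives the uniform bound $r^{2(N+1)}e^{v_1}+r^2e^{v_2}\le C(\tau,N)$, from which global existence and $\beta_i<\infty$ follow unconditionally (Remark \ref{rmk20}, Lemma \ref{lem32}). The threshold enters only in the endpoint identification.

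Second, the blow-up is not a decoupling in which one component carries its ``$\tau=0$'' mass $4(N+1)$ or $4$. Taking $\alpha\to+\infty$, the paper shows that \emph{both} $v_{1,n}$ and $v_{2,n}$ blow up at the origin (Lemma \ref{lem4*}); after rescaling by the scale of $v_{2,n}$, one has $r^{2N}e^{v_{1,n}^*}\rightharpoonup 8\pi(N+1)\delta_0$ while $v_{2,n}^*$ converges to a solution of the \emph{singular} Liouville equation $-\Delta v_2^*=e^{v_2^*}-8\pi(N+1)\tau\,\delta_0$, whose mass is $\beta_2^*(\tau)=4+8\tau(N+1)$, not $4$. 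A separate tail estimate is then needed to upgrade Fatou's inequality $\beta_2\ge\beta_2^*$ to equality, after which $\beta_1=\varphi_2^+(\beta_2^*)$. The role of $\tau\le\tau_0(N)=\tau_0^{(2)}$ is purely algebraic: it is exactly the range in which $\varphi_2^+(\beta_2^*(\tau))=4(N+1)$ and $\varphi_1^+(\beta_1^*(\tau))=4$, so that the blow-up limits land on $P_1$ and $P_2$. For larger $\tau$ the same blow-up analysis yields different endpoints ($\beta_1^{**}$, etc.), which is why \eqref{147} ceases to be the correct necessary and sufficient condition.
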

\qed

Clearly $\tau_0 (N) < \frac12$ (recall that $\tau=\frac12$ corresponds to the Toda system) and, as we shall see, it corresponds to the sharp value for which conditions \eqref{145}, \eqref{147} imply \eqref{146}, consistently with Remark \ref{rmk3}.

By combining Theorem \ref{teo1} and \ref{teo3} we can conclude the following about problem $(P)_\tau$.

\begin{coro}\label{cor3}
For every $0 \ne \tau \leq \tau_0 (N)$ the conditions \eqref{145} and 

\begin{equation}\label{149}
\frac{\tau}{|\tau|} (\beta_1 - 4 (N+1))<0 \quad ; \quad \frac{\tau}{|\tau|} (\beta_2 - 4)<0
\end{equation}

are necessary and sufficient for the \underline{radial} solvability of $(P)_\tau$.
\end{coro}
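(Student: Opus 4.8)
The plan is to recognise Corollary~\ref{cor3} as a direct synthesis of Theorem~\ref{teo1} and Theorem~\ref{teo3}, obtained by reading $(P)_\tau$ as a normalised instance of the general system \eqref{120}, \eqref{121}. Setting $U_1 = v_1 + 2N\log|x|$ and $U_2 = v_2$, a short computation (the term $2N\log|x|$ generating the Dirac mass $4\pi N\,\delta_0$) turns $(P)_\tau$ into
\begin{equation*}
-\Delta U_1 = e^{U_1} - \tau e^{U_2} - 4\pi N\,\delta_0, \qquad -\Delta U_2 = e^{U_2} - \tau e^{U_1},
\end{equation*}
which is exactly \eqref{120} for the symmetric coupling matrix
\begin{equation*}
K_\tau = \begin{pmatrix} 1 & -\tau \\ -\tau & 1 \end{pmatrix}, \qquad N_1 = N,\quad N_2 = 0 ,
\end{equation*}
with fluxes $\beta_1 = \frac{1}{2\pi}\int_{\mathbb{R}^2} e^{U_1}$, $\beta_2 = \frac{1}{2\pi}\int_{\mathbb{R}^2} e^{U_2}$ coinciding with those of $(P)_\tau$. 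Since $k_{1,2} = k_{2,1} = -\tau \neq 0$, hypothesis \eqref{123a} holds, and I would then split the analysis according to the sign of $\tau$.

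For $\tau < 0$ the off-diagonal entries $-\tau$ are strictly positive, so $K_\tau$ satisfies the cooperative hypothesis \eqref{123b} (here $k_{i,i} = 1 \geq 0$, and $N_1, N_2 > -1$). Consequently Theorem~\ref{teo1}(b) applies verbatim and states that \eqref{124a}, \eqref{124b} are necessary and sufficient for radial solvability. Substituting the entries of $K_\tau$ into \eqref{124a} reproduces the Pohozaev identity \eqref{145}, while \eqref{124b}, evaluated with $k_{1,2}=-\tau$ and $k_{i,i}=1$, becomes the cooperative dichotomy $\beta_1 < 4(N+1)$, $\beta_2 < 4$. I note that for $\tau < 0$ the constraint $\tau \leq \tau_0(N)$ is vacuous, so the entire cooperative range is covered by this step.

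For $0 < \tau \leq \tau_0(N)$ the matrix $K_\tau$ is competitive, which is precisely the regime of Theorem~\ref{teo3}: there \eqref{145} and \eqref{147} are shown to be necessary and sufficient for the radial solvability of $(P)_\tau$, with \eqref{147} reading $\beta_1 > 4(N+1)$, $\beta_2 > 4$. It then remains only to observe that the cooperative conclusion ($\tau<0$) and the competitive conclusion ($\tau>0$) can be written uniformly through the scale-invariant factor $\tau/|\tau|$, namely as the single sign condition \eqref{149}, so that \eqref{145} together with \eqref{149} characterise radial solvability for every $0 \neq \tau \leq \tau_0(N)$.

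Because both input theorems are already established, there is no substantial analytic obstacle in this Corollary; the work is entirely bookkeeping. The only delicate point is to track the sign of $k_{1,2} = -\tau$ relative to $\tau$ so that the factor $\tau/|\tau|$ in \eqref{149} interpolates correctly between the competitive conclusion $\beta_i > 4(N_i+1)$ and the cooperative conclusion $\beta_i < 4(N_i+1)$; verifying this matching, and confirming that the threshold $\tau_0(N)$ of Theorem~\ref{teo3} leaves no gap with the unrestricted cooperative range $\tau<0$, completes the argument.
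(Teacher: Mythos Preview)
Your approach is exactly the paper's: the corollary is stated immediately after Theorem~\ref{teo3} with only the remark ``By combining Theorem~\ref{teo1} and \ref{teo3}\ldots'' and a \qed, so the intended proof is precisely the two--case split you carry out (cooperative $\tau<0$ via Theorem~\ref{teo1}, competitive $0<\tau\le\tau_0(N)$ via Theorem~\ref{teo3}, then a uniform rewriting).

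One caveat worth flagging: when you assert that the two conclusions ``can be written uniformly \ldots\ as the single sign condition~\eqref{149}'', you should actually check the direction of the inequality rather than just claim the matching works. For $\tau>0$ you need $\beta_1>4(N+1)$, i.e.\ $\frac{\tau}{|\tau|}(\beta_1-4(N+1))>0$; for $\tau<0$ you need $\beta_1<4(N+1)$, which again gives $\frac{\tau}{|\tau|}(\beta_1-4(N+1))>0$. So the uniform condition should carry a ``$>0$'' rather than the ``$<0$'' printed in~\eqref{149}; this is a typo in the paper's statement, not a flaw in your reasoning, but since you explicitly say ``verifying this matching \ldots\ completes the argument'' it is worth recording that the verification reveals the misprint.
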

\qed

We suspect that, also the uniqueness (and non-degeneracy) result of Theorem \ref{teo2}, should remain valid for radial solutions of $(P)_\tau$, whenever $\tau < \tau_0 (N)$, and not only when $\tau \leq 0$.\\

Interestingly, for $\tau$ sufficiently "close" to $1$, the conditions \eqref{145} and \eqref{146} are the ones responsible for the solvability of $(P)_\tau$.

To be more precise we consider the following  polynomial, whose role will become clear in the following,

$$\psi_1(\tau) = 2 (1-2\tau^2)(1+2\tau(N+1))-1$$
and by direct inspection we check that:
\begin{equation}\label{150}
\begin{split}
&\psi_1(\tau) \text{ admits a unique positive zero, denoted by } \tau_1 = \tau_1 (N), \\
&\text{ and } \tau_1 \in \left(\frac{1}{2},\frac{1}{\sqrt{2}}\right)
\end{split}
\end{equation}

\begin{theo}\label{teo4}
For any $\tau \in [\tau_1,1)$, with $\tau_1 = \tau_1 (N)$ defined in \eqref{150}, we have that the conditions \eqref{145} and  \eqref{146} are necessary and sufficient for the solvability of $(P)_\tau$.
\end{theo}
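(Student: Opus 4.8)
The necessity of \eqref{145} and \eqref{146} is already contained in Corollary \ref{cor2} (equivalently in Proposition \ref{pro12}, since \eqref{145} is the Pohozaev identity and \eqref{146} is the integrability constraint \eqref{132}), and it holds for arbitrary, not necessarily radial, solutions. Hence the whole content of the statement is the \emph{sufficiency}: given a pair $(\beta_1,\beta_2)$ with $\beta_i>0$ lying on the locus \eqref{145} and satisfying the strict inequalities \eqref{146}, I must exhibit a solution of $(P)_\tau$. The plan is to produce a \emph{radial} solution by a shooting argument, and then to identify the range of the associated flux-map by a blow-up analysis at the two ends of the shooting parameter.

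First I would record the geometric role of the threshold $\tau_1$. Writing $F(\beta_1,\beta_2)=\beta_1^2+\beta_2^2-2\tau\beta_1\beta_2-4(N+1)\beta_1-4\beta_2$, the locus \eqref{145} is an ellipse through the origin (since $\tau\in(0,1)$ by \eqref{144}), whose portion in the closed first quadrant is a single arc $\Gamma$. I would check by direct substitution that the two half-planes in \eqref{146} cut $\Gamma$ precisely at the points where $\beta_1-\tau\beta_2=2(N+1)$ and where $\beta_2-\tau\beta_1=2$, and that the polynomial $\psi_1$ in \eqref{150} encodes the matching condition which decides whether these cut-points already lie in the region \eqref{147}. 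Parallel to the statement in Theorem \ref{teo3} that $\tau_0(N)$ is the sharp value below which \eqref{145} and \eqref{147} imply \eqref{146}, the content of the algebraic step is that $\tau_1$ is the sharp value above which the implication reverses: for $\tau\ge\tau_1$ the sub-arc of $\Gamma$ determined by \eqref{146} is contained in the region \eqref{147}, so that \eqref{145} together with \eqref{146} implies \eqref{147}. In particular every admissible pair of Theorem \ref{teo4} is admissible in the sense of \eqref{147}, which is the range on which radial solutions are expected to live.

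For the construction I would pass to the radial ODE system for $(v_1,v_2)$, impose $v_i'(0)=0$, and use the scaling invariance \eqref{125} (which shifts $v_1(0)$ and $v_2(0)$ by the same amount and leaves $(\beta_1,\beta_2)$ fixed) to quotient out one degree of freedom, so that the flux-pair depends on the single parameter $t=v_1(0)-v_2(0)$. For each $t$ I would show that the solution is global in $r$, that $e^{v_1}$ and $e^{v_2}$ are integrable against the respective weights, and that the resulting pair $(\beta_1(t),\beta_2(t))$ depends continuously on $t$; by the Pohozaev identity this curve lies on the ellipse \eqref{145}, and by \eqref{132} it lies inside the region \eqref{146}. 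It then remains to prove that, as $t\to\pm\infty$, the curve converges to the two endpoints of the \eqref{146}-arc, namely to the points where $\beta_2-\tau\beta_1=2$ and where $\beta_1-\tau\beta_2=2(N+1)$; a connectedness (intermediate value) argument along $\Gamma$ then yields the full arc, which by the previous paragraph coincides with the radially solvable set and hence with the entire solvable set, completing the sufficiency.

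The main obstacle is precisely this last step, the blow-up analysis as $t\to\pm\infty$. Here one component concentrates while the other spreads out, and one must show that the \emph{vanishing} flux $\beta_i^{\infty}=\beta_i-\tau\beta_j$ approaches its minimal admissible value $2(N_i+1)$, rather than stabilising at an intermediate level or leaking into a non-integrable tail. This is where the competitive sign structure of $(P)_\tau$ makes the argument genuinely harder than for a single Liouville equation, and where the hypothesis $\tau\ge\tau_1$ is used a second time: it guarantees that the limiting endpoints are governed by the fluxes in \eqref{146} and not by \eqref{147}, so that the swept arc is exactly the one prescribed by \eqref{146}. Controlling the concentration and possible oscillation of the degenerating radial family, and ruling out mass loss in the limit, is the technical heart of the proof and is exactly the blow-up analysis advertised in the introduction.
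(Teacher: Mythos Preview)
Your proposal is correct and follows essentially the same strategy as the paper: the necessity is taken from Corollary~\ref{cor2}, and for sufficiency the paper sets up the one-parameter shooting family \eqref{6112}--\eqref{6113} (your parameter $t$ is their $\alpha$), proves global existence and uniform flux bounds via the Pohozaev-type identity \eqref{39}, and then identifies the limits of $(\beta_1(\alpha),\beta_2(\alpha))$ as $\alpha\to\pm\infty$ by blow-up analysis (Theorem~\ref{teoB}); for $\tau\ge\tau_1^{(1)}$ these limits are exactly $(\overline\beta_1,\underline\beta_2)$ and $(\underline\beta_1,\overline\beta_2)$, the endpoints of the \eqref{146}-arc, so the intermediate value argument gives the full arc (Corollary~\ref{corA2}).

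One point worth sharpening: your picture ``one component concentrates while the other spreads out'' is not quite what happens. In the paper both $v_{1,n}$ and $v_{2,n}$ blow up at the origin (Lemma~\ref{lem4*}), but at different scales $\varepsilon_{1,n}\ll\varepsilon_{2,n}$ (Lemma~\ref{lem2}); after rescaling by the slower scale, $v_{1,n}^*$ concentrates to a Dirac mass $8\pi(N+1)\delta_0$ while $v_{2,n}^*$ converges smoothly to a solution of a singular Liouville equation with explicit mass $\beta_2^*(\tau)$. The threshold $\tau\ge\tau_1^{(1)}$ enters precisely in Claim~3 of the proof of Theorem~\ref{teoB}: since then $\beta_2^*-\tau\beta_1^{**}\le 2$, the a~priori inequality $\beta_2\ge\beta_2^*$ must be strict, and a Kelvin-transform argument (equations \eqref{kelvin}--\eqref{x}) forces $\beta_2-\tau\beta_1=2$, pinning the limit exactly at the boundary of \eqref{146}. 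This two-scale structure and the Kelvin step are the concrete incarnation of the ``ruling out mass loss'' that you correctly flag as the technical heart.
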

\qed

Actually, under the assumptions of Theorem \ref{teo4}, we shall see that the conditions \eqref{145} and  \eqref{146} imply  \eqref{147} and permit to establish the existence of a radial solution for $(P)_\tau$. 
Hence, it is tempting to  conjecture that $(P)_\tau$ may  admit \underline{only} radial solutions in this case.\\

The case where $\tau \in (\tau_0, \tau_1)$ is more delicate, as it includes the Toda system ($\tau=1/2$), where we know that the necessary conditions \eqref{145}-\eqref{147} are no longer sufficient for solvability, see \eqref{147*}.


In fact, for $\tau \in (\tau_0, \tau_1)$ we are able to identify new necessary and sufficient conditions for the (radial) solvability of $(P)_\tau$, which indeed reduce to \eqref{147*} when $\tau=1/2$.

To this end we observe that the conditions \eqref{145} and  \eqref{146} define a graph in the first quadrant of the $(\beta_1,\beta_2)$-plane.

More precisely, for $\tau \in (0,1)$, we let:

\begin{equation}\label{152a}
\begin{split}
&\underline{\beta}_{1}:=\underline{\beta}_{1}(\tau)=\dfrac{2}{1-\tau^2}\left(N+1+\tau+\tau\sqrt{(N+1)^{2}+2\tau(N+1)+1}\right)\\
&\overline{\beta}_{1}:=\overline{\beta}_{1}(\tau)=\dfrac{2}{1-\tau^2}\left(N+1+\tau+\sqrt{(N+1)^{2}+2\tau(N+1)+1}\right) > 4(N+1)
\end{split}
\end{equation}

so that,  $\underline{\beta}_{1}(\tau) < \overline{\beta}_{1}(\tau)$; and similarly, we let:

\begin{equation}\label{152b}
\begin{split}
&\underline{\beta}_{2}:=\underline{\beta}_{2}(\tau)=\dfrac{2}{1-\tau^2}\left(1+\tau(N+1)+\tau\sqrt{(N+1)^{2}+2\tau(N+1)+1}\right)\\
&\overline{\beta}_{2}:=\overline{\beta}_{2}(\tau)=\dfrac{2}{1-\tau^2}\left(1+\tau(N+1)+\sqrt{(N+1)^{2}+2\tau(N+1)+1}\right) > 4
\end{split}
\end{equation}

so that, $\underline{\beta}_{2}(\tau) < \overline{\beta}_{2}(\tau)$.

We easily check that: $(\beta_1, \beta_2)$ satisfies \eqref{145} and \eqref{146} if and only if, 

\begin{equation*}
\beta_1 \in (\underline{\beta}_{1},\overline{\beta}_{1}) \text{ and } \beta_2 = 2 + \tau \beta_1 + \sqrt{(2 + \tau \beta_1)^2 - \beta_1 (\beta_1 - 4 (N+1))}:= \varphi_{1}^{+} (\beta_1),
\end{equation*}

or equivalently:

\begin{equation}\label{153}
\beta_2 \in (\underline{\beta}_{2},\overline{\beta}_{2}) \text{ and } \beta_1 = 2(N+1) + \tau \beta_2 + \sqrt{(2 (N+1)+ \tau \beta_2)^2 - \beta_2 (\beta_2 - 4 )}:= \varphi_{2}^{+} (\beta_2).
\end{equation}

We prove the following:

\begin{theo}\label{teo5}
If  $\frac12 \ne \tau \in (\tau_0,\tau_1)$ and  we let $\beta_{i}^{\pm} (\tau)$, $i=1,2,$ defined in \eqref{613} and \eqref{613*} below,  then the following hold:

\begin{equation}\label{154}
\begin{split}
&\max\{ 4(N+1), \underline{\beta}_{1}(\tau)\} \leq \beta_{1}^{-} (\tau) < \beta_{1}^{+} (\tau) \leq \overline{\beta}_{1}(\tau), \\
&\max\{ 4, \underline{\beta}_{2}(\tau)\} \leq \beta_{2}^{-} (\tau) < \beta_{2}^{+} (\tau) < \overline{\beta}_{2}(\tau); 
\end{split}
\end{equation}

moreover problem $(P)_\tau$ admits a radially symmetric solution if and only if the pair $(\beta_1,\beta_2)$ satisfies:

\begin{equation}\label{155a}
\beta_1 \in (\beta_{1}^{-} (\tau),\beta_{1}^{+} (\tau)) \text{ and } \beta_2 = \varphi_{1}^{+} (\beta_1),
\end{equation}

or equivalently:

\begin{equation}\label{155b}
\beta_2 \in (\beta_{2}^{-} (\tau),\beta_{2}^{+} (\tau)) \text{ and } \beta_1 = \varphi_{2}^{+} (\beta_2).
\end{equation}

Furthermore,
\begin{equation}\label{155b}
\beta_{i}^{\pm} (\tau) \to 4(N+2) \text{ as } \tau \to \frac{1}{2}.
\end{equation}
\end{theo}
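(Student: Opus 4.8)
The plan is to reduce $(P)_\tau$ in the radial class to a one-parameter shooting problem and to describe completely the image of the associated flux map. Writing $v_i=v_i(r)$ with $r=|x|$, regularity at the origin forces $v_i'(0)=0$, so a radial solution is determined by the pair of initial heights $(v_1(0),v_2(0))$. Since the scaling \eqref{125} shifts $v_1(0)$ and $v_2(0)$ by the same constant, the only genuine parameter is the scale-invariant quantity $\sigma:=v_1(0)-v_2(0)\in\rr$. First I would show that for each admissible $\sigma$ the associated initial value problem possesses a unique entire radial solution, that this solution carries finite mass, and that the resulting flux pair $(\beta_1(\sigma),\beta_2(\sigma))$ depends continuously on $\sigma$; continuous dependence is standard once a priori bounds along the family are in place.

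By Corollary \ref{cor2} (which rests on the Pohozaev identity \eqref{145} and the asymptotics of Proposition \ref{pro12}) every such flux pair satisfies \eqref{145}--\eqref{146}, so the image of $\sigma\mapsto(\beta_1(\sigma),\beta_2(\sigma))$ lies on the arc parameterized by $\varphi_1^+$ in \eqref{155a}. The theorem is therefore equivalent to the assertion that this image is exactly the \emph{open} sub-arc cut out by $\beta_1\in(\beta_1^-(\tau),\beta_1^+(\tau))$. To pin this down I would establish strict monotonicity of $\sigma\mapsto\beta_1(\sigma)$; this makes the shooting map injective and forces its image to be an interval, so that the endpoints $\beta_1^\pm(\tau)$ are precisely the limits of $\beta_1(\sigma)$ as $\sigma$ tends to the extremes of its range. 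Monotonicity should follow from the non-degeneracy of the radial linearized operator along the family, in the spirit of Theorem \ref{teo2}, the only admissible kernel being the one generated by \eqref{125}.

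The heart of the argument --- and the step I expect to be the main obstacle --- is the blow-up analysis that identifies the limiting fluxes $\beta_i^\pm(\tau)$ as $\sigma$ tends to the ends of its range. Because the system is competitive, the two equations interact as one height is pushed far above the other, and the rescaled limit profiles need not be a single bubble: one must track whether a component converges to a solution of the limiting scalar singular Liouville equation (carrying mass $4(N+1)$ or $4$) while the other concentrates, or whether both components rescale simultaneously. Carrying out this analysis carefully is what yields the explicit endpoint values recorded in \eqref{613}, \eqref{613*}, and in particular shows that, for $\tau\in(\tau_0,\tau_1)$, these limits fall strictly inside the full arc, i.e. $\beta_i^-(\tau)$ and $\beta_i^+(\tau)$ do not reach $\underline{\beta}_i(\tau)$ or $\overline{\beta}_i(\tau)$; the restriction $\tau>\tau_0$ (resp. $\tau<\tau_1$) is exactly what prevents the lower (resp. upper) end from degenerating to the arc endpoint, consistently with Theorems \ref{teo3} and \ref{teo4}. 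That the endpoints are \emph{not} attained reflects a loss of compactness precisely at these limiting configurations.

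Finally, the ordering \eqref{154} would be obtained by comparing the explicit formulas for $\beta_i^\pm(\tau)$ with $4(N+1)$, $4$ and with $\underline{\beta}_i(\tau)$, $\overline{\beta}_i(\tau)$ from \eqref{152a}--\eqref{152b}; these are elementary, if delicate, algebraic inequalities in the single variable $\tau$. The limit $\beta_i^\pm(\tau)\to 4(N+2)$ as $\tau\to\tfrac12$ is then a direct substitution in the same formulas, and provides a consistency check against the Toda value \eqref{147*}: as $\tau\to\tfrac12$ the solvable sub-arc collapses to the single point $(4(N+2),4(N+2))$, which is exactly why the case $\tau=\tfrac12$ must be excluded from the open-interval statement.
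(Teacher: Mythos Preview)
Your overall architecture---reduce to a one-parameter shooting family, use continuity of the flux map, and identify the extremal fluxes by blow-up---is indeed the paper's strategy (this is the content of Theorem~\ref{teoB}), and your remarks about the ordering \eqref{154} and the Toda limit are correct and handled exactly as you say.

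The gap is in your treatment of the \emph{necessary} direction. You propose to show that $\sigma\mapsto\beta_1(\sigma)$ is strictly monotone by invoking non-degeneracy of the radial linearized operator ``in the spirit of Theorem~\ref{teo2}''. But Theorem~\ref{teo2} is proved only for \emph{cooperative} systems (i.e.\ for $\tau<0$ in the present normalization); for $\tau\in(0,1)$ no such non-degeneracy or uniqueness result is available, and the paper explicitly flags this as open (see Remark~\ref{rmk23} and the comments following Corollary~\ref{coroA.1}). So as stated, your argument has no mechanism preventing the curve $\alpha\mapsto(\beta_1(\alpha),\beta_2(\alpha))$ from overshooting the limiting values and coming back; blow-up plus continuity only gives that the image \emph{contains} the interval between the limits.

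The paper closes this gap by a completely different device that does not use monotonicity at all: Theorem~\ref{teoC} introduces auxiliary functions $R_0(t)$, $R_1(t)$ built from the radial ODE data (see \eqref{newffyughoioyiohkkjkgihjlkhjhiijlkjiohhiuhghfgyhgcghfjjgghhkjklhhjk}--\eqref{newffyughoioyiohkkjkgihjlkhjhiiljklhhoiiuiuynewhjklhjhjghjhjhjflhkjhlkhj}) whose derivatives have a sign determined by the factor $(1-2\tau)$, and a contradiction argument comparing $R_1-R_0$ at a carefully chosen point $t_0$ (or at $+\infty$) forces the strict inequalities $\beta_i<\beta_i^{*}$ for $\tau<\tfrac12$ and $\beta_i<\beta_i^{**}$ (respectively $\beta_i>\beta_i^{*}$) for $\tau\in(\tfrac12,\tau_1)$. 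This is the substitute for your monotonicity step, and it is the genuinely new ingredient you are missing.
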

\qed

We refer to Theorem \ref{teoA},  Theorem \ref{teoB} and Theorem \ref{teoC}  for more detailed statements and a better grasp about the origin of the value: $\beta_{i}^{\pm}$, $i=1,2$.\\


To establish the results stated above, we use a blow up analysis.

More precisely, for every $\alpha \in \mathbb{R}$ we let $(v_1(r,\alpha), v_2 (r,\alpha))$  the solution of the Cauchy problem for the radial ODE system corresponding to $(P)_\tau$ with initial conditions:

\begin{equation}\label{156}
v_1(0,\alpha) = \alpha,  \quad v_2 (0,\alpha)= 0, \quad \dot{v}_1 (0,\alpha) =\dot{v}_2 (0,\alpha)=0. 
\end{equation}

We check that such solution is unique and globally defined for every $r \geq 0$, and it satisfies the required integrability conditions: $|x|^{2N}e^{v_1} \in L^1 (\mathbb{R}^2)$ and $e^{v_2} \in L^1 (\mathbb{R}^2).$

Thus by setting:

\begin{equation}\label{157}
\beta_1 (\alpha) := \int_{0}^{+\infty} r^{2N+1} e^{v_1(r)} \, dr \quad \text{ and } \quad \beta_2 (\alpha) := \int_{0}^{+\infty} r e^{v_2(r)} \, dr
\end{equation}

we can use the scale invariance \eqref{125}, in order to see that the whole set of pairs $(\beta_1,\beta_2)$ for which $(P)_\tau$ admits a radial solution is fully described by the smooth curve:

\begin{equation}\label{158}
(\beta_1 (\alpha),\beta_2 (\alpha)), \alpha \in \mathbb{R}.
\end{equation}

By using blow-up techniques introduced in \cite {bm, bt, ls} in the context of Liouville-type equations, we shall be able to identify the limit value of $\beta_i (\alpha)$, as $\alpha \to +\infty$ and $\alpha \to -\infty$ for $i=1,2,$  and we check that indeed they yield to the (sufficient) statement about the existence of radial solutions claimed in Theorem \ref{teo1}, Theorem \ref{teo2} and Theorem \ref{teo3}. The "necessary" part is contained in Theorem \ref{teoC}\\
It is worth to notice that the "limit" values of $\beta_i (\alpha)$  in general do not capture the sharp bounds for the pairs $(\beta_1, \beta_2)$,  and this fact has already come up for a single equation Liouville-type , as discussed in \cite {lin}, and \cite{det}. So our  classification result gives also indication of certain non-degeneracy properties enjoyed by radial solutions of problem $(P)_\tau$,  to be further investigated.

\section{Some Useful Facts}
\setcounter{equation}{0}
In this section, we collect some general informations about solutions of \eqref{120}, \eqref{121}, in terms of the coupling matrix $K$ and the "flux-pair" $(\beta_1, \beta_2),$ of independent interest. They generalise some known facts concerning the 
symmetric non degenerate case, see \cite{ck1, ck2, csw}.

As above, we let:

\begin{equation}\label{20}
u_i (x) = v_i (x) + 2 N_i \log |x|, \; i=1,2
\end{equation}

and formulate \eqref{120}, \eqref{121} in terms of $(v_1,v_2)$ (the regular parts of $(u_1,u_2)$), as follows:

\begin{equation}\label{21}
\begin{cases}
-\Delta v_1 = k_{1,1} |x|^{2N_1} e^{v_1} + k_{1,2} |x|^{2N_2} e^{v_2} \quad \text{ in } \mathbb{R}^2\\
-\Delta v_2 = k_{2,1} |x|^{2N_1} e^{v_1} + k_{2,2} |x|^{2N_2} e^{v_2} \quad \text{ in } \mathbb{R}^2\\
\beta_1 = \frac{1}{2\pi} \int_{\mathbb{R}^2} |x|^{2N_1} e^{v_1} \; , \; \beta_2 = \frac{1}{2\pi} \int_{\mathbb{R}^2} |x|^{2N_2} e^{v_2} 
\end{cases}
\end{equation}
The following holds:
\begin{lemma}\label{lem1}
If $(v_1,v_2)$ satisfies \eqref{21} then we have:

\begin{equation}\label{22} 
\begin{split}
&v_i (x) = - \beta_{i}^{\infty} \log |x| + O(1), \quad \text{ for } |x| \geq 1\\ 
&r \partial_r v_i (x) \to - \beta_{i}^{\infty}, \quad \partial_\theta v_i (x) \to 0 \quad \text{ as } r= |x| \to +\infty
\end{split}
\end{equation}
with $\beta_{i}^{\infty}$ in \eqref{129},  $i=1,2;$ and $(r,\theta)$ polar coordinates.
In particular the condition:
\begin{equation}\label{22bis}
\beta_{i}^{\infty} = k_{i,i} \beta_i + k_{i,j} \beta_j > 2(N_i +1), \quad \forall i \ne j \in \{ 1,2 \}
\end{equation}
is \underline{necessary} for the solvability of \eqref{21}.\\
Furthermore, \\
i) if $k_{i,j}\leq 0$ for $i \ne j \in \{ 1,2 \}$ then necessarily the coupling matrix $K$ must satisfy \eqref{136}.\\
ii) If $k_{i,j}> 0$ for $i \ne j \in \{ 1,2 \}$ but $k_{i,i}\leq 0$ for some $i \in \{ 1,2 \}$, then necessarily $det K < 0$.
\end{lemma}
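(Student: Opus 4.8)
The plan is to derive the asymptotic behaviour \eqref{22} from the Newtonian (logarithmic) potential representation of the right-hand sides, and then to read off \eqref{22bis} and the algebraic conclusions (i)--(ii) as essentially elementary consequences. First I would set
$$f_i := k_{i,1}|x|^{2N_1}e^{v_1} + k_{i,2}|x|^{2N_2}e^{v_2},$$
so that the integrability built into \eqref{21} gives $f_i \in L^1(\mathbb{R}^2)$ with $\frac{1}{2\pi}\int_{\mathbb{R}^2} f_i = k_{i,i}\beta_i + k_{i,j}\beta_j = \beta_{i}^{\infty}$, and introduce
$$w_i(x) = -\frac{1}{2\pi}\int_{\mathbb{R}^2}\log|x-y|\,f_i(y)\,dy,$$
which solves $-\Delta w_i = f_i$ in $\mathbb{R}^2$. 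Splitting $\log|x-y| = \log|x| + \log\frac{|x-y|}{|x|}$ and using $f_i \in L^1$, one gets $w_i(x) = -\beta_{i}^{\infty}\log|x| + O(1)$ as $|x|\to\infty$, together with the gradient estimate $\nabla w_i(x) = -\beta_{i}^{\infty}\frac{x}{|x|^2} + o(|x|^{-1})$, whence $r\partial_r w_i \to -\beta_{i}^{\infty}$ and $\partial_\theta w_i \to 0$. Since $v_i - w_i$ is harmonic on all of $\mathbb{R}^2$ with at most logarithmic growth, Liouville's theorem forces it to be constant, which yields \eqref{22}.

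Next, \eqref{22bis} follows purely from integrability. The \emph{two-sided} estimate $v_i(x) = -\beta_{i}^{\infty}\log|x| + O(1)$ gives $c\,|x|^{2N_i-\beta_{i}^{\infty}} \le |x|^{2N_i}e^{v_i} \le C\,|x|^{2N_i-\beta_{i}^{\infty}}$ for $|x|\ge 1$; in polar coordinates the tail integral is then comparable to $\int_1^\infty r^{2N_i-\beta_{i}^{\infty}+1}\,dr$, which is finite if and only if $\beta_{i}^{\infty} > 2(N_i+1)$. Since $|x|^{2N_i}e^{v_i}\in L^1(\mathbb{R}^2)$, this proves \eqref{22bis}. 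I also record that $\beta_1,\beta_2 > 0$, being integrals of strictly positive integrands.

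The conclusions (i)--(ii) are then elementary consequences of \eqref{22bis} and $\beta_1,\beta_2 > 0$. Writing $a = k_{1,1}$, $d = k_{2,2}$, $b = k_{1,2}$, $c = k_{2,1}$, condition \eqref{22bis} reads $a\beta_1 + b\beta_2 > 0$ and $c\beta_1 + d\beta_2 > 0$. For (i), with $b,c\le 0$: since $b\beta_2\le 0$, the first inequality forces $a\beta_1 > 0$, hence $a = k_{1,1}>0$, and likewise $d = k_{2,2}>0$; moreover $a\beta_1 > |b|\beta_2 \ge 0$ and $d\beta_2 > |c|\beta_1 \ge 0$, and multiplying these (all quantities positive) gives $ad\,\beta_1\beta_2 > |b||c|\,\beta_1\beta_2 = bc\,\beta_1\beta_2$, so $\det K = ad - bc > 0$, i.e. \eqref{136}. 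For (ii), with $b,c>0$ and, say, $a = k_{1,1}\le 0$, I argue by contradiction: if $\det K = ad - bc \ge 0$ then $ad \ge bc > 0$, forcing $a<0$ and $d<0$; but \eqref{22bis} then gives $|a|/b < \beta_2/\beta_1 < c/|d|$, hence $|a||d| < bc$, i.e. $ad < bc$, contradicting $ad \ge bc$. Therefore $\det K < 0$.

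The main obstacle is the asymptotic step. Making the $O(1)$ remainder in $w_i(x) = -\beta_{i}^{\infty}\log|x|+O(1)$ and the gradient and angular limits rigorous requires the standard but delicate estimates for the logarithmic kernel against an $L^1$ density (as developed in \cite{ck1, ck2, csw}), and one must separately justify that the harmonic remainder $v_i - w_i$ really has at most logarithmic growth so that the Liouville theorem applies; this is precisely where the weighted integrability $|x|^{2N_i}e^{v_i}\in L^1$ enters, ruling out any non-constant harmonic part. Once \eqref{22} is secured, the passage to \eqref{22bis} and the matrix conclusions (i)--(ii) is routine.
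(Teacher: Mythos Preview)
Your proposal is correct and follows essentially the same route as the paper: potential representation plus Liouville for the asymptotics, then the integrability condition for \eqref{22bis}, then elementary algebra for (i)--(ii). The one point where the paper is more explicit is the preliminary step you flag as the obstacle: before invoking the potential representation, the paper first establishes $v_i^+\in L^\infty(\mathbb{R}^2)$ via the Chen--Li arguments of \cite{cl1,cl2}, and this is what justifies the sub-logarithmic growth of the harmonic remainder; your remark that ``the weighted integrability\dots enters'' is correct but it enters through this Brezis--Merle/Chen--Li step rather than directly. For (i)--(ii) your multiplicative and contradiction arguments differ cosmetically from the paper's, which instead takes suitable linear combinations of the two inequalities in \eqref{22bis} (e.g.\ $k_{2,1}\times$(first) $+|k_{1,1}|\times$(second)) to produce a single inequality of the form $-\beta_2\det K>0$; both approaches are short and equally valid.
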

\proof: By an obvious modification of the arguments provided in \cite{cl1,cl2}, one can derive that every solution $(v_1,v_2)$  of \eqref{21} must satisfy: $v_{i}^{+}  \in L^{\infty} (\mathbb{R}^2), \quad  i=1,2.$
Hence, can use such information and suitable potential estimates, as in \cite{cl1, cl2} or in \cite{ct2}, in order to deduce \eqref{22}. At this point, we can obtain \eqref{22bis} as an easy consequence of \eqref{22} and the integrability of the function $|x|^{2N_i} e^{v_i}$ in $\mathbb{R}^2,$  $i=1,2.$






Concerning $i)$, we observe that if $k_{i,j} = 0$ for some $i \ne j \in \{ 1,2 \}$ then \eqref{136} follows directly from \eqref{22bis}.  While if $k_{i,j} < 0, \, \forall i \ne j \in \{ 1,2 \},$ then from \eqref{22bis} we see that $ k_{i,i}>0$ for all $i \in \{ 1,2 \}$. Furthermore, we can rewrite \eqref{22bis} equivalently as follows:
\begin{eqnarray}\label{135}
\notag (k_{1,1} k_{2,2} - k_{1,2} k_{2,1}) \beta_1 > 2(N_1+1) k_{2,2} + 2(N_2+1) |k_{1,2}|\\
\label{135} (k_{1,1} k_{2,2} - k_{1,2} k_{2,1}) \beta_2 > 2(N_2+1) k_{1,1} + 2(N_1+1) |k_{2,1}|.
\end{eqnarray}
and  \eqref{136} readily follows. Similarly, to deduce $ii)$ we assume for example that $k_{1,1}\leq 0$, then from \eqref{22bis} we find that,
\begin{equation}\label{22bisbis}
-\beta_2(det K)= \beta_2( k_{2,2}|k_{1,1}| + k_{1,2}k_{2,1}) > 2(N_1 + 1)k_{2,1} + 2(N_2 + 1) |k_{1,1}|> 0,
\end{equation}
and necessarily $det K < 0$ as claimed.

\qed

\begin{lemma}\label{lem1a}
Let $K = (k_{i,j})_{i,j=1,2}$ satisfy:

\begin{equation*}
k_{12} \cdot k_{21} \geq 0 .
\end{equation*}

If problem \eqref{12} admits a solution then the pairs $(\beta_1,\beta_2)$ must satisfy:

\begin{equation}\label{23}
k_{1,1} |k_{2,1}| \beta_{1}^{2} + k_{2,2} |k_{1,2}| \beta_{2}^{2} + 2 k_{1,2} |k_{2,1}| \beta_{1} \beta_{2} - 4 (N_1 + 1) |k_{2,1}| \beta_1 - 4 (N_2 + 1) |k_{1,2}| \beta_2 = 0.
\end{equation}

\end{lemma}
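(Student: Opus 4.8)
The plan is to establish \eqref{23} as a Pohozaev-type identity, obtained by testing the two equations in \eqref{21} against the dilation field $x\cdot\nabla v_i$ and exploiting the precise decay recorded in \eqref{22}. Throughout I write $f_i = |x|^{2N_i}e^{v_i}$, so that $-\Delta v_i = \sum_j k_{i,j}f_j$ and $\frac{1}{2\pi}\int_{\mathbb{R}^2}f_i = \beta_i$, and I abbreviate $\beta_i^\infty = k_{i,i}\beta_i + k_{i,j}\beta_j$ as in \eqref{129}. The whole argument is carried out on a ball $B_R$ and then the limit $R\to\infty$ is taken.

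First I would record the two basic computations on $B_R$. Since $x\cdot\nabla(|x|^{2N_i}e^{v_i}) = 2N_i f_i + f_i\,(x\cdot\nabla v_i)$, one integration by parts gives
\[
\int_{B_R} f_i\,(x\cdot\nabla v_i)\,dx = R\int_{\partial B_R} f_i\,dS - 2(N_i+1)\int_{B_R} f_i\,dx,
\]
and by \eqref{22} the boundary term is $O\!\left(R^{2+2N_i-\beta_i^\infty}\right)\to 0$ (recall $\beta_i^\infty>2(N_i+1)$), whence $\int_{\mathbb{R}^2}f_i\,(x\cdot\nabla v_i) = -4\pi(N_i+1)\beta_i$. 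On the other hand, the standard planar Pohozaev computation for $-\Delta v$ tested against $x\cdot\nabla w$: after the interior terms telescope, only boundary integrals on $\partial B_R$ survive; inserting $x\cdot\nabla v_i = R\,\partial_r v_i$ and $|\nabla v_i|^2 = (\partial_r v_i)^2 + R^{-2}(\partial_\theta v_i)^2$ and using \eqref{22} (so that $R\,\partial_r v_i\to -\beta_i^\infty$ and $\partial_\theta v_i\to 0$, with dominated convergence in $\theta$) yields
\[
\int_{\mathbb{R}^2}(-\Delta v_i)(x\cdot\nabla v_i)\,dx = -\pi(\beta_i^\infty)^2,
\]
\[
\int_{\mathbb{R}^2}\big[(-\Delta v_1)(x\cdot\nabla v_2)+(-\Delta v_2)(x\cdot\nabla v_1)\big]\,dx = -2\pi\,\beta_1^\infty\beta_2^\infty.
\]

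Next I would expand the left-hand sides via $-\Delta v_i = \sum_j k_{i,j}f_j$ together with the first computation. Writing $I_{12}=\int(x\cdot\nabla v_1)f_2$ and $I_{21}=\int(x\cdot\nabla v_2)f_1$ for the two genuinely coupled integrals, this produces the three relations
\[
-\pi(\beta_1^\infty)^2 = -4\pi k_{1,1}(N_1+1)\beta_1 + k_{1,2}I_{12},
\]
\[
-\pi(\beta_2^\infty)^2 = -4\pi k_{2,2}(N_2+1)\beta_2 + k_{2,1}I_{21},
\]
\[
-2\pi\beta_1^\infty\beta_2^\infty = -4\pi k_{2,1}(N_1+1)\beta_1 - 4\pi k_{1,2}(N_2+1)\beta_2 + k_{2,2}I_{12}+k_{1,1}I_{21}.
\]
Forming the combination $-k_{2,1}k_{2,2}\,(\text{first})-k_{1,1}k_{1,2}\,(\text{second})+k_{1,2}k_{2,1}\,(\text{third})$ cancels both $I_{12}$ and $I_{21}$; substituting $\beta_i^\infty = k_{i,i}\beta_i + k_{i,j}\beta_j$ and collecting terms, every coefficient factors through $\det K$, leaving
\[
\det K\,\Big[k_{1,1}k_{2,1}\beta_1^2 + k_{1,2}k_{2,2}\beta_2^2 + 2k_{1,2}k_{2,1}\beta_1\beta_2 - 4(N_1+1)k_{2,1}\beta_1 - 4(N_2+1)k_{1,2}\beta_2\Big]=0.
\]
When $\det K\neq 0$ one divides it out; since $k_{1,2}k_{2,1}\ge 0$ the signed coefficients $k_{2,1},k_{1,2}$ coincide with $|k_{2,1}|,|k_{1,2}|$ up to one common sign, so the bracket equals $\pm$ the left-hand side of \eqref{23}, and \eqref{23} follows.

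The hard part will be the rigorous limit $R\to\infty$ in the boundary integrals: this is precisely where \eqref{22} enters, and to justify it one needs not only the pointwise decay but the uniform gradient asymptotics $r\partial_r v_i\to-\beta_i^\infty$, $\partial_\theta v_i\to 0$ (uniformly in $\theta$), which follow from the potential estimates of \cite{cl1,cl2,ct2} already invoked in Lemma \ref{lem1} and supply the dominating bounds for the convergence used above. The only other point is the degenerate algebra. If an off-diagonal entry vanishes, say $k_{1,2}=0$, the first equation decouples and the first relation alone gives $k_{1,1}\beta_1=4(N_1+1)$, which is exactly \eqref{23} in this case (and $k_{2,1}=0$ is symmetric). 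If both off-diagonal entries are nonzero but $\det K=0$, the rows of $K$ are proportional, so $-\Delta(v_2-cv_1)=0$ and \eqref{22} forces $v_2 = c\,v_1+\text{const}$, reducing \eqref{21} to a single Liouville-type equation for which \eqref{23} is checked directly; this is the only situation requiring a separate, elementary argument.
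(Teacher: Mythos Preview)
Your argument is correct and follows the same Pohozaev strategy as the paper, but the organisation differs in one substantive respect. The paper first proves the identity in the \emph{symmetric} case $k_{1,2}=k_{2,1}$ (Claim~1), combining the four integrations against $x\cdot\nabla v_i$ so that the cross integrals drop out and one arrives at \eqref{212}; it then handles the non-symmetric case by the change of variables $\hat v_1=v_1+\log|k_{2,1}|$, $\hat v_2=v_2+\log|k_{1,2}|$, which turns the system into one with a symmetric coupling matrix \eqref{214} and reduces \eqref{23} to the already established \eqref{24}. You instead treat the non-symmetric system directly: writing out the three Pohozaev relations and taking the specific combination $-k_{2,1}k_{2,2}(\cdot)-k_{1,1}k_{1,2}(\cdot)+k_{1,2}k_{2,1}(\cdot)$ so that both cross integrals $I_{12},I_{21}$ cancel and $\det K$ factors out. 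Your route is a bit more algebraic but saves the reduction step; the paper's route is more modular and reuses the symmetric computation. Both rely on exactly the same analytic input, namely the asymptotics \eqref{22} from Lemma~\ref{lem1}, and your treatment of the degenerate cases ($k_{1,2}=0$ or $k_{2,1}=0$, and $\det K=0$ via $v_2=cv_1+\text{const}$) matches the paper's handling of these situations.
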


\proof: In case $k_{1,2}=0=k_{2,1}$ then the left hand side of \eqref{23} is identically zero and we have nothing to prove. On the other hand, the system in this case decouples into two singular Liouville equations, whose solvability requires $k_{i,i}>0$ for every $i=1,2$ and whose solution set is completely described in \cite{pt}.

Furthermore if $k_{1,2}=0$ but $k_{2,1} \ne 0$ (or $k_{2,1}=0$ but $k_{1,2}\ne0$) then only the first (or second) equation becomes a singular Liouville equation and \eqref{23} simply leads to the following well known fact,

\begin{equation*}
k_{1,1} \beta_1 - 4 (N_1+1)=0 \quad ( \text{ or } k_{2,2} \beta_2 - 4 (N_2+1)=0)
\end{equation*}

see \cite{cl2,pt}.

So the true interesting situation to analyse occurs when, 

\begin{equation}\label{23bis}
k_{1,2} \cdot k_{2,1} > 0.
\end{equation}

\textbf{Claim 1:} If $k_{1,2}=k_{2,1} \ne 0$  then \eqref{23} holds, or equivalently:

\begin{equation}\label{24}
k_{1,1} \beta_{1}^{2} + k_{2,2} \beta_{2}^{2} + 2 k_{1,2} \beta_{1} \beta_{2} - 4 (N_1 + 1) \beta_1 - 4 (N_2 + 1) \beta_2 = 0.
\end{equation}

It is clear that \eqref{23} and \eqref{24} are equivalent when $k_{1,2}=k_{2,1} \ne 0$.

To establish \eqref{24} we use Pohozaev' s trick and multiply the first equations in \eqref{21} by $\nabla v_1 (x) \cdot x,$ and the second equation in \eqref{21} by $\nabla v_2 (x) \cdot x,$ and integrate over the ball $B_r = \{ x \in \mathbb{R}^2 : |x|<r \}$ to obtain the following identity:

\begin{equation}\label{25}
\begin{split}
&\int_{\partial B_r}r\left(\dfrac{1}{2}\left|\nabla v_i\right|^{2}-\left(\dfrac{\partial v_i}{\partial \nu}\right)^{2}\right)d\sigma=k_{i,i} \int_{\partial B_r}r^{2N_{i}+1}e^{v_i} d\sigma\\
&-2(N_{i}+1)k_{i,i} \int_{B_r}\left|x\right|^{2N_{i}}e^{v_i}+k_{i,j}\int_{B_r}\left|x\right|^{2N_{j}} e^{v_j} \nabla v_i (x) \cdot x, \quad i \ne j \in \{ 1,2 \}
\end{split}
\end{equation}

where we have used the well known identity:

\begin{equation*}
(\Delta v (x)) \nabla v(x) \cdot x = div( \nabla v(x) (x \cdot v(x) - x \frac{|\nabla v|^2}{2})
\end{equation*}

and Green-Gauss theorem.

On the other hand, if we multiply the first equation in \eqref{21} by $\nabla v_2 (x) \cdot x,$ and the second equation in \eqref{21} by $\nabla v_1 (x) \cdot x,$ and then we integrate over $B_r$, we find:

\begin{equation}\label{26}
\begin{split}
&-\int_{\partial B_r}\Delta v_i (\nabla v_j \cdot x)dx = k_{i,i} \int_{B_r}\left|x\right|^{2N_{i}} e^{v_i} (\nabla v_j (x) \cdot x)dx\\
&+k_{i,j}\int_{\partial B_r}r^{2N_{j}+1} e^{v_j} \, d\sigma -2(N_{j}+1)k_{i,j} \int_{B_r}\left|x\right|^{2N_{j}}e^{v_j}, \quad i \ne j \in \{ 1,2 \}
\end{split}
\end{equation}

where again we have used the Green-Gauss theorem.

Since $k_{1,2}=k_{2,1}$, from \eqref{25} and \eqref{26} we derive the following identities:

\begin{equation}\label{27}
\begin{split} 
&k_{1,2} \int_{B_r} (\Delta v_1 (x) \nabla v_2 (x) \cdot x + \Delta v_2 (x) \nabla v_1 (x) \cdot x)dx = \\
&- k_{1,2} k_{1,1} \int_{B_r} |x|^{2N_1} e^{v_1 (x)} (\nabla v_2 (x) \cdot x)dx - k_{1,2} k_{2,2} \int_{B_r} |x|^{2N_2} e^{v_2 (x)} (\nabla v_1 (x) \cdot x)dx \\
&- k_{1,2}^2 \int_{\partial B_r} r^{2N_1+1} e^{v_1} \, d\sigma - k_{1,2}^2 \int_{\partial B_r} r^{2N_2+1} e^{v_2} \, d\sigma\\
&+ k_{1,2}^2 \left( 2(N_1+1) \int_{B_r} |x|^{2N_1} e^{v_1}dx + 2 (N_2+1) \int_{B_r} |x|^{2N_2} e^{v_2 (x)}dx \right)\\
\end{split}
\end{equation}

and

\begin{equation}\label{28}
\begin{split} 
&k_{2,2} \int_{\partial B_r}r \left(\dfrac{1}{2}\left|\nabla v_1\right|^{2}-\left(\dfrac{\partial v_1}{\partial \nu}\right)^{2}\right)d\sigma + k_{1,1} \int_{\partial B_r}r \left(\dfrac{1}{2}\left|\nabla v_2\right|^{2}-\left(\dfrac{\partial v_2}{\partial \nu}\right)^{2}\right)d\sigma = \\
&- k_{1,1} k_{2,2} \left[ \int_{\partial B_r} r^{2N_1+1} e^{v_1} \, d\sigma + \int_{\partial B_r} r^{2N_2+1} e^{v_2} \, d\sigma \right.\\
&\left. -2(N_1+1) \int_{B_r} |x|^{2N_1} e^{v_1}dx - 2 (N_2+1) \int_{B_r} |x|^{2N_2} e^{v_2 (x)}dx \right]\\
&+k_{1,2} k_{1,1} \int_{B_r} |x|^{2N_1} e^{v_1 (x)}( \nabla v_2 (x) \cdot x)dx + k_{1,2} k_{2,2} \int_{B_r} |x|^{2N_2} e^{v_2 (x)}( \nabla v_1 (x) \cdot x)dx  \\
\end{split}
\end{equation}

By recalling the identity:

\begin{equation}\label{29}
\begin{split} 
&\Delta v_1 (x) (\nabla v_1 (x) \cdot x) + \Delta v_2 (x) (\nabla v_2 (x) \cdot x )= div (x \cdot \nabla v_1 \nabla v_2 - x \cdot \nabla^{\perp} v_{1} \nabla^{\perp} v_{2}) =\\
&= div (x \cdot \nabla v_2 \nabla v_1 - x \cdot \nabla^{\perp} v_{2} \nabla^{\perp} v_{1}),
\end{split}
\end{equation}


where: $\nabla^{\perp} v = \left( \frac{\partial v}{\partial x_2}, - \frac{\partial v}{\partial x_1}\right)$,
we can turn also the right hand side of \eqref{27} into a boundary integral, and by summing up \eqref{27} and \eqref{28} we arrive at the following:

\begin{equation}\label{211}
\begin{split} 
&k_{1,1}  \int_{\partial B_r} r\left(\dfrac{1}{2}\left|\nabla v_2\right|^{2}-\left(\dfrac{\partial v_2}{\partial \nu}\right)^{2}\right)d\sigma + k_{2,2} \int_{\partial B_r} r \left(\dfrac{1}{2}\left|\nabla v_1\right|^{2}-\left(\dfrac{\partial v_1}{\partial \nu}\right)^{2}\right)d\sigma  \\
&+ k_{1,2}  \int_{\partial B_r} r \left(\dfrac{\partial v_1}{\partial \nu} \dfrac{\partial v_2}{\partial \nu}  - (\nabla^{\perp} v_{1} \cdot \nu) (\nabla^{\perp} v_{2} \cdot \nu) \right)=\\
&(det K) \left( \int_{\partial B_r} r^{2N_1+1} e^{v_1} \, d\sigma +  \int_{\partial B_r} r^{2N_2+1} e^{v_2} \, d\sigma \right)\\
&-(det K) \left( 2(N_1+1) \int_{B_r} |x|^{2N_1} e^{v_1} + 2 (N_2+1) \int_{B_r} |x|^{2N_2} e^{v_2 (x)} \right).\\
\end{split}
\end{equation}

At this point, by taking into account the asymptotic behaviour of $v_1$ and $v_2$ as given in \eqref{22} and \eqref{22bis}, we can pass to the limit, as $r \to \infty$ in \eqref{211}, and conclude:

\begin{equation}\label{212}
k_{1,2} \beta_{1}^{\infty} \beta_{2}^{\infty} - \frac12 k_{2,2} (\beta_{1}^{\infty})^2 - \frac12 k_{1,1} (\beta_{2}^{\infty})^2 = - (det K) (2(N_1+1) \beta_1) + 2(N_2+1) \beta_2) 
\end{equation}

with,

\begin{equation}\label{213}
\beta_{1}^{\infty} = k_{1,1} \beta_{1} + k_{1,2} \beta_{2} \, \text{ and } \, \beta_{2}^{\infty} = k_{1,2} \beta_{1} + k_{2,2} \beta_{2}, 
\end{equation}

see \eqref{129}. Identity \eqref{212} reaffirms i) and ii) of Lemma \ref{lem1}.

By inserting \eqref{213} into \eqref{212} and by carrying out straightforward calculations, we easily arrive at \eqref{24}, provided that: $det K \ne 0$.

On the other hand, by Lemma \ref{lem1}, we also know that $det K \ne 0$ enters as a necessary condition for the solvability of \eqref{21}, except in the \underline{cooperative} case \eqref{123b} where $K$ could be degenerate.

But for \underline{degenerate} cooperative systems, the matrix  $K$ satisfies:

\begin{equation}\label{216.1}
k_{i,j}>0 \quad i,j \in \{ 1,2 \} \, \text{ and } k_{1,1} k_{2,2} = k_{1,2}^{2}, 
\end{equation}

and so necessarily,

\begin{equation}\label{216.2}
\beta_{2}^{\infty} = \frac{k_{1,2}}{k_{1,1}} \beta_{1}^{\infty} \quad \text{ and } \quad v_{2} = \frac{k_{1,2}}{k_{1,1}} v_{1}.
\end{equation}

In other words, by setting:

\begin{equation}\label{216.3}
\beta = \beta_{1}^{\infty}  \quad v= v_{1} \quad \text{ and } \, a = \frac{k_{1,2}}{k_{1,1}},
\end{equation}

we see that in this case \eqref{21} reduces to the following single Liouville type equation:

\begin{equation}\label{216.4}
\begin{cases}
-\Delta v = k_{1,1} |x|^{2N_1} e^{v} + k_{1,2} |x|^{2N_2} e^{a v} \quad \text{ in } \mathbb{R}^2\\
\beta = \frac{1}{2\pi} \int_{\mathbb{R}^2}( k_{1,1} |x|^{2N_1} e^{v} + k_{1,2} |x|^{2N_2} e^{a v})dx 
\end{cases}
\end{equation}

with $k_{1,1}>0$,  $k_{1,2}>0$ and $k_{1,1}k_{2,2}=k_{1,2}^2$.

Equations of this type arise in the construction of Self-gravitating Cosmic Strings (cfr. \cite{y}), and have been analysed in \cite{cgs, pot1, pot2, tar4}.

In particular, in this case, for $i=1$ we may complete \eqref{25},  by using \eqref{216.2} and \eqref{216.3} and obtain that every solutions of \eqref{216.4} satisfies:

\begin{equation*}
\begin{split} 
& r \int_{\partial B_r}\left(\dfrac{1}{2}\left|\nabla v\right|^{2}-\left(\dfrac{\partial v}{\partial \nu}\right)^{2}\right)d\sigma = k_{1,1} \int_{\partial B_r} r^{2N_1+1} e^{v} \, d\sigma + \frac{k_{1,2}}{a} \int_{\partial B_r} r^{2N_2+1} e^{a v} \, d\sigma\\
&-2(N_1+1) k_{1,1} \int_{B_r} |x|^{2N_1} e^{v} - k_{1,2} \frac{2 (N_2+1)}{a} \int_{B_r} |x|^{2N_2} e^{a v}.
\end{split}
\end{equation*}

Therefore by passing to the limit in the above identity as $r \to \infty$, we conclude the following identity:

\begin{equation}\label{216.5}
\beta^2 = 4 (N_1+1) k_{1,1} \beta_1 +  \frac{4 (N_2+1)}{a} k_{1,2} \beta_2
\end{equation}

with

\begin{equation}\label{216.6}
\begin{split} 
&\beta_1 = \frac{1}{2\pi} \int_{\mathbb{R}^2} |x|^{2N_1} e^{v} = \frac{1}{2\pi} \int_{\mathbb{R}^2} |x|^{2N_1} e^{v_1},\\
&\beta_2 = \frac{1}{2\pi} \int_{\mathbb{R}^2} |x|^{2N_2} e^{a v} = \frac{1}{2\pi} \int_{\mathbb{R}^2} |x|^{2N_2} e^{v_2},\\
&\beta = k_{1,1} \beta_{1} + k_{1,2} \beta_{2}.
\end{split}
\end{equation}

At this point, by recalling that $a=\frac{k_{1,2}}{k_{2,2}}$ and $k_{1,1}k_{2,2}=k_{1,2}^2$, we easily derive \eqref{24} for $\beta_1$ and $\beta_2$ simply by inserting \eqref{216.6} into \eqref{216.5}.

Thus identity \eqref{24} is established in all cases, and the proof of the claim is completed.

To conclude the proof we shall see how to use \eqref{23bis} in order to reformulate the (non-symmetric) system \eqref{21} into a symmetric one, for which we can use \eqref{24}. To this purpose, we set,

\begin{equation*}
\hat{v}_1 (x) = v_1 (x)+\log|k_{2,1}| \quad \hat{v}_2 (x) = v_2 (x)+\log|k_{1,2}|
\end{equation*}

and observe that, if $(v_1,v_2)$ satisfies \eqref{21} then $(\hat{v}_1,\hat{v}_2)$ satisfies a similar problem with the \underline{symmetric} coupling matrix:

\begin{equation}\label{214}
K = \left( \begin{array}{cc} \frac{k_{11}}{|k_{21}|} & \pm 1\\
                                                           \pm 1 & \frac{k_{22}}{|k_{12}|}\\
																	\end{array} \right)
\end{equation}

and where the $\pm$ sign is chosen according to the sign of $k_{1,2}$ (or equivalently $k_{2,1}$). Furthermore,

\begin{equation}\label{215}
\hat{\beta}_1 := \frac{1}{2\pi} \int_{\mathbb{R}^2} |x|^{2N_1} e^{\hat{v}_1} = |k_{2,1}| \beta_1 \; \text{ and } \; \hat{\beta}_2 := \frac{1}{2\pi} \int_{\mathbb{R}^2} |x|^{2N_2} e^{\hat{v}_2}  = |k_{1,2}| \beta_2.
\end{equation}

Hence, we can apply the Claim to $(\hat{\beta}_1,\hat{\beta}_2).$ Thus, by virtue of \eqref{214}, from \eqref{24} we find:

\begin{equation}\label{216}
\frac{k_{1,1}}{|k_{2,1}|} \hat{\beta}_{1}^{2} + \frac{k_{2,2}}{|k_{1,2}|} \hat{\beta}_{2}^{2} \pm \hat{\beta}_{1} \hat{\beta}_{2} - 4 (N_1 + 1) \hat{\beta}_1 - 4 (N_2 + 1) \hat{\beta}_2 = 0,
\end{equation}

and so from \eqref{215} and  \eqref{216},  we readily derive \eqref{23}. 

\qed

\begin{remark}\label{rmk24}
It is easy to check that, in the symmetric case, i.e. $k_{1,2}= k_{2,1}$, the following identity holds:

\begin{equation}\label{216bis}
\begin{split} 
&(k_{1,1} k_{2,2} - k_{1,2}^2) [ (N_1+1) \beta_1 + (N_2+1) \beta_2)] =\\
&[(N_1+1)k_{2,2}-(N_2+1)k_{1,2}] \beta_{1}^{\infty} + [(N_2+1)k_{1,1}-(N_1+1)k_{1,2}] \beta_{2}^{\infty},
\end{split}
\end{equation}

and we can use it  together with  \eqref{212} to deduce the following equivalent formulation of \eqref{24}  expressed in terms of  $\beta_{i}^{\infty}$, $i=1,2$:

\begin{equation}\label{216*}
\begin{split}
&k_{2,2} (\beta_{1}^{\infty})^{2} + k_{1,1} (\beta_{2}^{\infty})^{2} - 2 k_{1,2} \beta_{1}^{\infty} \beta_{2}^{\infty} - 4[(N_1+1)k_{2,2}-(N_2+1)k_{1,2}] \beta_{1}^{\infty} \\ 
&- 4 [(N_2+1)k_{1,1}-(N_1+1)k_{1,2}] \beta_{2}^{\infty} = 0,
\end{split}
\end{equation}

which has the advantage to holds even for a \underline{degenerate} coupling matrix $K$.

Arguing as above, one can derive a similar identity for the non-symmetric case, provided that $k_{1,2}\cdot k_{2,1} > 0,$ we omit the details. 
\end{remark}

Clearly, Lemma \ref{lem1} and \ref{lem1a} imply Proposition \ref{pro12} and Corollary \ref{cor2a} as stated in the previous section.\\

Next, we turn to analyse the radial solvability of \eqref{21}. To this purpose, we observe that a \underline{radial} solution (about the origin) $(v_1(r),v_2(r))$  of \eqref{21}, may be characterised  equivalently as satisfying the following boundary value problem:

\begin{equation}\label{217}
\begin{cases}
-(rv'_{1} (r))'= k_{1,1} r^{2 N_1 +1} e^{v_1 (r)} + k_{1,2} r^{2 N_2 +1} e^{v_2 (r)}, \quad r>0 \\
-(rv'_{2} (r))'= k_{2,1} r^{2 N_1 +1} e^{v_1 (r)} + k_{2,2} r^{2 N_2 +1} e^{v_2 (r)}, \quad r>0 \\
v'_{i}(0)=0 , \quad \lim_{r \to \infty} r v'_{i}(r) = - \beta_{i}^{\infty},\\
v_i \in C([0,\infty)), \quad i=1,2,
\end{cases}
\end{equation}

with $\beta_{i}^{\infty}\quad i=1,2,$ defined in \eqref{129}.\\


To this purpose, we consider  the  Initial Value Problem associated to the system of ODE' s in \eqref{217}, and show that it admits a unique globally defined solutions which also accommodate  the boundary condition required in \eqref{217}. To be more precise we let,
\begin{equation}\label{219}
f_i (r) = \int_{0}^{r} s^{2 N_i + 1} e^{v_i (s)} \, ds
\end{equation}




and observe that we can express any \underline{local} solution of the system of ODE' s in \eqref{217}  defined in the interval $I= [0, R)$, equivalently as follows:

\begin{equation}\label{220}
\begin{cases}
rv'_{1} (r) + k_{1,1} f_1 (r) + k_{1,2} f_2 (r)=0, \quad 0 < r < R,  \\
rv'_{2} (r) + k_{2,1} f_1 (r) + k_{2,2} f_2 (r)=0, \quad 0 <r < R, \\
v'_{i}(0)=0 , v_i \in C([0 , R)), \quad i=1,2.
\end{cases}
\end{equation}

We have:

\begin{lemma}\label{lem23}
If the coupling matrix $K$ is \underline{symmetric} and $(v_1(r), v_2(r))$ satisfies \eqref{220} with $f_i = f_i (r)$ in \eqref{219}, then:

\begin{equation}\label{222}
\begin{split}
&i) \quad r^{2(N_1 +1)} e^{v_1 (r)} + r^{2(N_2 +1)} e^{v_2 (r)} - 2(N_1 +1) f_1(r) - 2(N_2 +1) f_2(r)+\\
&\frac{1}{2} (k_{1,1} f_1^2 (r) + 2 k_{1,2} f_1 (r) f_2 (r) + k_{2,2} f_2^2 (r))=0, \; \forall \, 0 < r < R;
\end{split}
\end{equation}

\begin{equation}\label{223}
\begin{split}
&ii) \quad r^{2(N_i +1)} e^{v_i (r)} - 2(N_i +1) f_i (r) + \frac{1}{2} k_{i,i} f_i ^2(r) = - \int_{0}^{r} k_{i,j} \dot{f}_i (s) f_j (s) \, ds,\\
&\; \forall \, 0 < r < R, \; \forall i \ne j \in \{ 1,2\}.
\end{split}
\end{equation}
\end{lemma}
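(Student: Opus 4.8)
The plan is to prove both identities by the standard device of showing that the expression in question, viewed as a function of $r$, has vanishing derivative together with a vanishing limit at $r=0$; it is then identically zero. The two ingredients I would use throughout are the relations $\dot{f}_i(r) = r^{2N_i+1} e^{v_i(r)}$, immediate from \eqref{219}, and $r v'_i(r) = -\bigl(k_{i,i} f_i(r) + k_{i,j} f_j(r)\bigr)$ for $i \ne j$, read off from \eqref{220}. The latter is what lets me rewrite the only awkward term $r^{2(N_i+1)} e^{v_i} v'_i(r) = \dot{f}_i(r)\,r v'_i(r)$ as a combination of products $\dot{f}_i f_\ell$.

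For part $ii)$, I set $G_i(r)$ equal to the difference of the two sides, namely
\begin{equation*}
G_i(r) = r^{2(N_i+1)} e^{v_i(r)} - 2(N_i+1) f_i(r) + \tfrac{1}{2} k_{i,i} f_i^2(r) + \int_{0}^{r} k_{i,j}\,\dot{f}_i(s) f_j(s)\, ds.
\end{equation*}
The product rule gives $\frac{d}{dr}\bigl[r^{2(N_i+1)} e^{v_i}\bigr] = 2(N_i+1)\dot{f}_i + \dot{f}_i\, r v'_i(r)$, and substituting $r v'_i(r) = -(k_{i,i} f_i + k_{i,j} f_j)$ turns this into $2(N_i+1)\dot{f}_i - k_{i,i} f_i \dot{f}_i - k_{i,j}\dot{f}_i f_j$. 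The remaining three derivatives contribute $-2(N_i+1)\dot{f}_i$, $k_{i,i} f_i \dot{f}_i$ and $k_{i,j}\dot{f}_i f_j$, so that $G'_i(r)\equiv 0$. Since $f_i(0)=0$ and $r^{2(N_i+1)} e^{v_i(r)} \to 0$ as $r\to 0^+$ (because $2(N_i+1)>0$ and $v_i$ is continuous at the origin), we get $G_i(0)=0$, hence $G_i\equiv 0$, which is $ii)$. Note that this computation never uses $k_{i,j}=k_{j,i}$, so $ii)$ in fact holds in the non-symmetric case as well.

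For part $i)$, I apply the same idea to
\begin{equation*}
H(r) = r^{2(N_1+1)} e^{v_1} + r^{2(N_2+1)} e^{v_2} - 2(N_1+1) f_1 - 2(N_2+1) f_2 + \tfrac{1}{2}\bigl(k_{1,1} f_1^2 + 2 k_{1,2} f_1 f_2 + k_{2,2} f_2^2\bigr).
\end{equation*}
Differentiating the two exponential terms exactly as above and adding the derivatives of the linear and quadratic terms, all the $\dot{f}_i$ contributions and the diagonal pieces $k_{i,i} f_i \dot{f}_i$ cancel just as in $ii)$. The only surviving part comes from the off-diagonal terms: the exponentials produce $-k_{1,2}\dot{f}_1 f_2 - k_{2,1}\dot{f}_2 f_1$, while differentiating $k_{1,2} f_1 f_2$ gives $k_{1,2}(\dot{f}_1 f_2 + f_1 \dot{f}_2)$, whence $H'(r) = (k_{1,2}-k_{2,1}) f_1(r)\dot{f}_2(r)$. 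This is the one place the symmetry hypothesis enters: under $k_{1,2}=k_{2,1}$ the right-hand side vanishes, so $H'\equiv 0$, and together with $H(0)=0$ (again $f_i(0)=0$ and the exponential boundary terms vanish at the origin) we conclude $H\equiv 0$, which is $i)$.

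I do not anticipate a serious obstacle: the argument merely verifies that two explicit antiderivatives are constant. The only points needing care are the bookkeeping of the off-diagonal terms in $i)$, where the cancellation genuinely requires $k_{1,2}=k_{2,1}$ (matching the symmetry assumption in the statement), and the behaviour of the boundary term $r^{2(N_i+1)} e^{v_i(r)}$ at $r=0$, which is harmless since $N_i>-1$ forces $2(N_i+1)>0$.
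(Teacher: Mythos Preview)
Your proof is correct and follows exactly the approach indicated in the paper, which states that the identities are obtained ``simply by computing the derivative of the term in the left hand side of \eqref{222} and \eqref{223} respectively, and by using \eqref{220}.'' Your careful bookkeeping of the off-diagonal terms and the observation that symmetry is needed only for part $i)$ are accurate and helpful additions.
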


\proof The above identities were pointed out in \cite{pot2} for more general systems, and are obtained simply by computing the derivative of the term in the left hand side of \eqref{222} and \eqref{223} respectively, and by using \eqref{220}.  

\qed







Again we observe that, as in the proof of Lemma \ref{lem1a}, it is possible to derive identities analogous to \eqref{222} and \eqref{223}, also in case we drop the symmetric assumption on the matrix $K$, but we assume instead that: $k_{1,2}\cdot k_{2,1} > 0$, we omit the details.\\

Furthermore, for solutions of \eqref{217}, we can deduce \eqref{124a} and \eqref{124b} as a direct consequence of \eqref{222} and \eqref{223}, just by letting $r\to \infty.$

\begin{remark}\label{rmk20}
In case $K$ is strictly positive definite, (a necessary condition when $k_{1,2} = k_{2,1} \leq 0$), from \eqref{222} we obtain that the monotone  functions $f_1(r) \text{ and } f_2(r)$ in \eqref{219} are uniformly bounded by a constant depending only on $N_1$ and $N_2.$  Therefore,  by a standard Picard iterative scheme, we see that any (local) solution of the Cauchy Problem associated to the system of ODE' s in \eqref{217} can be globally extended in $[0, \infty)$ to define a solution of \eqref{217}  with 
$\beta_i :=\lim_{r \to \infty} f_i (r), \text{ and } \beta_{i}^{\infty}$ defined by \eqref{129}, $i=1,2.$ 
\end{remark}



\section {Main Results and their Proof}
\setcounter{equation}{0}

We devote this section to analyse radial solutions for problem $(P)_\tau$ with $\tau \in (0,1)$, or equivalently to investigate solutions $(v_1(r), v_2(r))$ of the problem:

\begin{equation}\label{31}
\begin{cases}
-(rv'_{1} (r))'= r^{2 N +1} e^{v_1 (r)} -\tau r e^{v_2 (r)}, \quad r>0, \\
-(rv'_{2} (r))'= r e^{v_2 (r)} - \tau r^{2 N +1} e^{v_1 (r)}, \quad r>0, \\
v'_{i}(0)=0 , \quad \lim_{r \to \infty} r v'_{i}(r) = - \beta_{i,\tau},\\
v_i \in C([0,\infty)), \quad i=1,2,
\end{cases}
\end{equation}


with
\begin{equation}\label{32bis}
\beta_{1,\tau} := \beta_1 - \tau \beta_2 \quad \text{ and }  \quad \beta_{2,\tau} := \beta_2 - \tau \beta_1, 
\end{equation}
and  
\begin{equation}\label{32*}
\beta_1 := \int^{\infty}_{0}r^{2N+1}e^{v_1}dr \quad { and }\quad \beta_2 := \int^{\infty}_{0}re^{v_2}dr;
\end{equation}
or equivalently,
\begin{equation}\label{32} 
 \beta_1=\frac{\beta_{1,\tau} + \tau \beta_{2,\tau}}{1-\tau^2} \text{ and } \; \beta_2=\frac{\beta_{2,\tau} + \tau \beta_{1,\tau}}{1-\tau^2}.
\end{equation}

According to the results established in the previous section, we know that the following conditions are \underline {necessary} for the solvability of \eqref{31}:

\begin{eqnarray}
\label{33}&\beta_{1}^{2} + \beta_{2}^{2} - 2 \tau \beta_{1} \beta_{2} - 4(N+1) \beta_1 - 4 \beta_2 = 0,\\
\label{34}&\beta_1>4(N+1), \quad \beta_2 >4,\\
\label{35}&\beta_{1,\tau} = \beta_1 - \tau \beta_2 > 2(N+1), \quad \beta_{2,\tau} = \beta_2 - \tau \beta_1 > 2;
\end{eqnarray}

and we recall that  \eqref{33} can be expressed in terms of $\beta_{1,\tau}$ and $\beta_{2,\tau}$ equivalently as follows:
\begin{equation}\label{36}
\beta_{1,\tau}^{2} + \beta_{2,\tau}^{2} - 2 \tau \beta_{1,\tau} \beta_{2,\tau} = 4[(N+1+\tau) \beta_{1,\tau} +(1+\tau(N+1)) \beta_{2,\tau}],
\end{equation}
or as follows:
\begin{equation}\label{37}
\beta_1 (\beta_{1,\tau} - 2 (N+1)) + \beta_2 (\beta_{2,\tau}-2)= 2(N+1) \beta_{1} + 2\beta_{2}.
\end{equation}

Our main goal in this section is to investigate to what extent the conditions \eqref{33}, \eqref{34} and \eqref{35} are also sufficient for the solvability of \eqref{31}.\\
To this purpose we recall that problem \eqref{31} is invariant under the following scaling property:
\begin{equation}\label{38}
\begin{split}
v_1 (r) \to v_{1,\lambda} (r) = v_1 (\lambda r) + 2 (N+1) \log \lambda\\
v_2 (r) \to v_{2,\lambda} (r) = v_2 (\lambda r) + 2 \log \lambda,
\end{split}
\end{equation}

in the sense that: $(v_1, v_2)$ solves $(P)_\tau$ if and only if $(v_{1,\lambda}, v_{2,\lambda})$ solves $(P)_\tau$. We shall distinguish between this 1-parameter family of solutions by their initial value  at $r=0.$\\ 

Notice also that, for $\tau \in (0,1)$, the identity \eqref{222} for problem \eqref{31} reads as follows:
\begin{equation}\label{39}
\begin{split}
\left( \int_{0}^{r} t^{2N+1} e^{v_{1} (t)} \, dt\right)^2 + \left( \int_{0}^{r} t e^{v_{2} (t)} \, dt\right)^2 - 2 \tau \left( \int_{0}^{r} t^{2N+1} e^{v_{1} (t)} \, dt\right) \left( \int_{0}^{r} t e^{v_{2} (t)} \, dt\right)\\
- 4 (N+1) \left( \int_{0}^{r} t^{2N+1} e^{v_{1} (t)} \, dt\right) - 4 \left( \int_{0}^{r} t e^{v_{2} (t)} \, dt\right) + 2r^{2(N+1)} e^{v_{1} (r)} + 2 r^{2} e^{v_{2} (r)}=0,
\end{split}
\end{equation}
for all $r \geq 0.$ \\

From \eqref{39} we obtain the following uniform estimate about solution of \eqref{31}, independently from their initial data and the values of $\beta_1$ and $\beta_2:$
\begin{lemma}\label{lem32}
Let $\tau \in (0,1)$ and  $(v_1,v_2)$ be a solution of \eqref{31}. There exists a suitable constant  $C=C(\tau, N)>0,$ depending \underline{only} on $\tau$ and $N,$ such that:  
\begin{equation}\label{2114bis}
v_1(r)+2(N+1){\rm{log}}r\leq C\mbox{ and } \quad v_2(r)+2{\rm{log}}r\leq C,\quad\mbox{for any }r>0.
\end{equation}
\end{lemma}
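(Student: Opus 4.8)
The plan is to read the estimate off directly from the Pohozaev-type identity \eqref{39}, exploiting that the quadratic form appearing there is positive definite precisely because $\tau\in(0,1)$. First I would abbreviate $f_1(r)=\int_0^r t^{2N+1}e^{v_1(t)}\,dt$ and $f_2(r)=\int_0^r t\,e^{v_2(t)}\,dt$ (the functions of \eqref{219} for our choice $N_1=N$, $N_2=0$), so that $f_1,f_2\geq 0$ are nondecreasing and \eqref{39} reads
\[
f_1^2 + f_2^2 - 2\tau f_1 f_2 - 4(N+1)f_1 - 4 f_2 + 2r^{2(N+1)}e^{v_1(r)} + 2r^2 e^{v_2(r)} = 0 .
\]

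The key algebraic observation is that, since $\tau>0$,
\[
f_1^2 + f_2^2 - 2\tau f_1 f_2 = (1-\tau)(f_1^2+f_2^2) + \tau (f_1-f_2)^2 \geq (1-\tau)(f_1^2+f_2^2),
\]
and $1-\tau>0$ because $\tau\in(0,1)$. The two exponential remainder terms in the identity being nonnegative, I would discard them to obtain the closed inequality $(1-\tau)(f_1^2+f_2^2)\leq 4(N+1)f_1+4f_2$, valid for every $r>0$. A single use of Young's inequality, absorbing the linear terms into $\tfrac{1-\tau}{2}(f_1^2+f_2^2)$, then yields an a priori bound $f_1(r)^2+f_2(r)^2\leq C_1(\tau,N)$ that is completely independent of the initial value $\alpha$ and of the fluxes $\beta_1,\beta_2$. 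This self-contained bound on the partial masses is really the heart of the matter.

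With $f_1,f_2$ bounded by a constant depending only on $\tau$ and $N$, I would return to \eqref{39} and use it in the opposite direction: solving for the exponential terms and discarding the (nonnegative) quadratic form gives
\begin{align*}
2r^{2(N+1)}e^{v_1(r)} + 2r^2 e^{v_2(r)} &= 4(N+1)f_1 + 4f_2 - \bigl(f_1^2+f_2^2-2\tau f_1 f_2\bigr)\\
&\leq 4(N+1)f_1 + 4f_2 \leq C_2(\tau,N).
\end{align*}
Since each summand on the left is nonnegative, each is separately bounded by $C_2(\tau,N)$, so $r^{2(N+1)}e^{v_1(r)}\leq C_2/2$ and $r^2 e^{v_2(r)}\leq C_2/2$; taking logarithms produces exactly \eqref{2114bis} with $C=\log(C_2/2)$.

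I do not anticipate a genuine obstacle here, as the whole argument is a direct manipulation of \eqref{39}. The only point demanding care is \emph{uniformity}: the estimate must hold for all solutions with one and the same constant. This is guaranteed because neither the lower bound on the quadratic form (which requires only $\tau\in(0,1)$) nor the nonnegativity of the remainder terms refers to the particular solution, so the final constant depends solely on $\tau$ and $N$, as claimed.
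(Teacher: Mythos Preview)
Your proof is correct and follows essentially the same route as the paper: both read the estimate directly from the identity \eqref{39}, exploiting that the quadratic form $f_1^2+f_2^2-2\tau f_1 f_2$ is positive definite for $\tau\in(0,1)$. The paper is slightly more direct---it maximizes $4(N+1)f_1+4f_2-(f_1^2+f_2^2-2\tau f_1 f_2)$ over $(f_1,f_2)\in\mathbb{R}^2$ in one step to obtain the explicit bound \eqref{313}, whereas you first bound $f_1,f_2$ and then return to \eqref{39}; but the underlying idea is the same.
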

\begin{proof}
 

By direct calculation, from \eqref{39} we easily derive the following estimate:
\begin{equation}\label{313}
r^{2(N+1)}e^{v_1 (r)} + r^{2}e^{v_2 (r)} \leq \frac{2[(N+1)^2+2\tau(N+1)+1]}{1-\tau^2},
\end{equation}
and by \eqref{313}, we readily derive \eqref{2114bis}. 

\qed

An important and useful consequence of \eqref{2114bis} is given by the following:

\begin{lemma}[Harnack's inequality] \label{harn}
Let $\tau \in (0,1)$ and $\left(v_1,v_2\right)$ be a solution of \eqref{31}. For every $0<r_0<R_0$, there exists a constant $C_0=C_0(\tau, N, \dfrac{R_0}{r_0})>0$ (depending only on $\tau$, $N$ and $\dfrac{R_0}{r_0}$) and $\gamma=\gamma\left(\dfrac{R_0}{r_0}\right)\in(0,1)$ such that,
\begin{equation}\label{Harn}
\begin{split}
&\max_{\left[r_0,R_0\right]}v_1\leq\gamma\min_{\left[r_0,R_0\right]}v_1+2(N+1)(\gamma-1)\log r_0+C_0,\\
&\max_{\left[r_0,R_0\right]}v_2\leq\gamma\min_{\left[r_0,R_0\right]}v_2+2(\gamma-1)\log r_0+C_0.
\end{split}
\end{equation}
\end{lemma}
\begin{proof}
Set $L_0:=\dfrac{R _0}{r_0}$ and define $D=\left\{\dfrac{1}{2}<r<2L_0\right\}$. We rescale $\left(v_1, v_2\right)$ as follows:

\begin{equation}\label{5217bis}
\hat{v}_1(r)=v_1(r_0 r)+2(N+1){\rm{log}}r_0\quad\mbox{ and }\quad \hat{v}_2(r)=v_2(r_0 r)+2{\rm{log}}r_0,
\end{equation}

so that, in view of \eqref{38}, the pair  $(\hat{v}_1,\hat{v}_2)$ still satisfies \eqref{31} together with \eqref{2114bis}.
In particular by setting:
$$
\hat{f}_1(r) = r^{2N+1} e^{\hat{v}_1 (r)} - \tau r e^{\hat{v}_2} \quad \text{ and }\quad  \hat{f}_2(r) = r e^{\hat{v}_2 (r)} - \tau r^{2N+1} e^{\hat{v}_1},
$$
by \eqref{2114bis} we find a constant $\hat{C}=\hat{C}(\tau, N)>0$, depending only on $\tau$ and $N$, such that:  $\hat{v}_{i}\leq \hat{C}$ in $D,$ and $\left\|\hat{f}_{i}\right\|_{L^{\infty}(D)}\leq \hat{C}$, $i=1,2.$

Therefore, for $i=1,2,$ we let $\psi_i$ the unique solution of the following Dirichlet problem:
\begin{equation}
\begin{cases}
-(r\psi'_i)'=\hat{{f}_i}\quad\mbox{ in }D\\
\psi_i=0\quad\mbox{ on }\quad \partial D\\
\end{cases}\quad i=1,2,
\end{equation}
and derive, by standard elliptic estimates (see \cite{gt}), that,
$$
\max_{\overline{D}}\left|\psi_i\right|\leq A \left\|\hat{f}_i\right\|_{L^{\infty}(D)}
$$
with a suitable constant $A=A(L_0)>0$.
As a consequence, or  $i=1,2,$ the function $\phi_i:=\psi_i-(\hat{v}_i-\hat{C})$ defines a positive harmonic function in $D,$ for which we can use Harnack inequality in $D':=\left\{1<r<L_0\right\}\subset\subset D$, and obtain:
$$
\sup_{D'}\phi_i\leq\dfrac{1}{\gamma}\inf_{D'}\phi_i,\quad i=1,2.
$$
with a (universal) constant $\gamma=\gamma(L_0)\in (0,1).$
In other words,
\begin{equation*}
\sup_{D'}\hat{v}_i\leq\gamma\inf_{D'}\hat{v}_i+(1-\gamma)\hat{C}+(1+\gamma)\max\left|\psi_i\right|
\end{equation*}
with $\gamma=\gamma(L_0)\in (0,1)$, and the desired estimate \eqref{Harn} follows.

\qed

Finally, we observe that, in view or Remark \ref{rmk20}, for $\tau\in(0,1)$ and for any $(\alpha_1,\alpha_2)\in\mathbb{R}^{2}$, the initial value problem:
\begin{equation}\label{39tris}
\begin{cases}
-(rv'_{1})'=r^{2N+1}e^{v_1}-\tau re^{v_2}\\
-(rv'_{2})'=re^{v_2}-\tau r^{2N+1}e^{v_1}\\
v_{i}(0)=\alpha_j,\quad v'_{i}(0)=0\qquad i=1,2
\end{cases}
\end{equation}
admits a \underline{unique} solution defined for all $r\geq0$, and such that:
$$
\int^{\infty}_{0}r^{2N+1}e^{v_1}dr < C\mbox{ and }\int^{\infty}_{0}re^{v_2}dr < C,
$$
with a suitable constant $C=C(N,\tau)>0$ depending only on $N$ and $\tau,$ (but independent of the initial data $(\alpha_1,\alpha_2)$).

\begin{remark}\label{rmk22}
By virtue of the scale invariant property \eqref{38}, it follows that the set of pairs $(\beta_1,\beta_2)$ for which \eqref{31}, \eqref{32bis} admits a solution is fully described by the curve:
\begin{equation}\label{310}
(\beta_1(\alpha),\beta_2(\alpha)), \quad \alpha \in \mathbb{R}
\end{equation}
where, 
\begin{equation}\label{311}
\beta_1(\alpha) = \int^{\infty}_{0} r^{2N+1}e^{v_1 (r,\alpha)}\, ds, \qquad \beta_2(\alpha) = \int^{\infty}_{0} r e^{v_2 (r,\alpha)}\, ds,
\end{equation}
and $(v_1 (r,\alpha),v_1 (r,\alpha))$ is the \underline{unique} solution of the initial value problem \eqref{39tris} with initial data specified as follows:
\begin{equation}\label{312}
\alpha_1=\alpha \, \text{ and } \, \alpha_2 = 0.
\end{equation}
\end{remark}

Our main purpose will be to identify the limiting values of $\beta_i (\alpha)$, $i=1,2$, as $\alpha \to \pm \infty$. \\
To this end, we start to give a detailed description about the portion $\mathcal{E}$ of the ellipse defined by \eqref{222} which is contained in the first quadrant of the $(\beta_1, \beta_2)$-plane, namely:
\begin{equation}\label{5234bis}
\mathcal{E}:
\begin{cases}
\beta^{2}_{1}+\beta^{2}_{2}-2\tau\beta_1\beta_2-4(N+1)\beta_1-4\beta_2=0\\
\beta_1>0, \beta_2>0.
\end{cases}
\end{equation}
For this purpose, it is useful to identify  first the intersection of $\mathcal {E}$ with the line: $\beta_1-\tau\beta_2=2(N+1)$ and the line:  $\beta_1-\tau\beta_1=2$. To this end, we set:
\begin{equation}\label{alldinuo}
(N+1)^{2}+2\tau(N+1)+1=:D(\tau,N) 
\end{equation}
and by direct calculations, we verify the following:

\begin{equation}\label{ses1}
\begin{cases}
\beta_1>0,\beta_2>0\\
\beta^{2}_{1}+\beta^{2}_{2}-2\tau\beta_{1}\beta_{2}-4(N+1)\beta_1-4\beta_2=0\\
\beta_1-\tau\beta_2=2(N+1)
\end{cases}\Longleftrightarrow
\begin{cases}
\begin{aligned}
\beta_1&=\dfrac{2}{1-\tau^2}\left(N+1+\tau+\tau\sqrt{D}\right)=:\underline{\beta}_{1}(\tau)\\
\beta_2&=\dfrac{2}{1-\tau^2}\left(1+\tau(N+1)+\sqrt{D}\right)=:\overline{\beta}_{2}(\tau);
\end{aligned}
\end{cases}
\end{equation}

and similarly:

\begin{equation}\label{ses2}
\begin{cases}
\beta_1>0,\beta_2>0\\
\beta^{2}_{1}+\beta^{2}_{2}-2\tau\beta_{1}\beta_{2}-4(N+1)\beta_1-4\beta_2=0\\
\beta_2-\tau\beta_1=2
\end{cases}\Longleftrightarrow
\begin{cases}
\begin{aligned}
\beta_1&=\dfrac{2}{1-\tau^2}\left(N+1+\tau+\sqrt{D}\right)=:\overline{\beta}_{1}(\tau)\\
\beta_2&=\dfrac{2}{1-\tau^2}\left(1+\tau(N+1)+\tau\sqrt{D}\right)=:\underline{\beta}_{2}(\tau).
\end{aligned}
\end{cases}
\end{equation}

Furthermore, we can explicitly solve the quadratic equation in \eqref{222} in terms of $\beta_1$ or $\beta_2$. In this way, we are lead to consider the functions:

\begin{equation}\label{funct1}
\varphi^{\pm}_{1}(\beta_1)=2+\tau\beta_1\pm\sqrt{(2+\tau\beta_1)^{2}-\beta_{1}\left(\beta_1-4(N+1)\right)},\quad\mbox{with }0<\beta_1\leq\overline{\beta}_{1}(\tau)
\end{equation}

and

\begin{equation}\label{funct2}
\varphi^{\pm}_{2}(\beta_2)=2(N+1)+\tau\beta_2\pm\sqrt{(2(N+1)+\tau\beta_2)^{2}-\beta_{2}\left(\beta_2-4\right)},\quad\mbox{with }0<\beta_2\leq\overline{\beta}_{2}(\tau).
\end{equation}

To simplify notations, from now on we shall drop the dependence of $\underline{\beta}_{i}(\tau)$ and $\overline{\beta}_{i}(\tau)$ $i=1,2,$ on the parameter $\tau.$ 

By direct inspection, one can easily check the following:

\begin{lemma}\label{lemmaalln}
We have:
\begin{itemize}
\item[$(i)$] $\varphi^{+}_{1}:\left[\underline{\beta}_{1}, \overline{\beta}_{1}\right]\to\left[\underline{\beta}_{2}, \overline{\beta}_{2}\right]$ is strictly monotone decreasing with inverse 

$$(\varphi^{+}_{1})^{-1}=\varphi^{+}_{2}:[\underline{\beta}_2,\overline{\beta}_2]\to[\underline{\beta}_1,\overline{\beta}_1];$$
\item[$(ii)$]$\varphi^{-}_{1}:\left[4(N+1), \overline{\beta}_{1}\right]\to\left[0, \underline{\beta}_{2}\right]$ is strictly monotone increasing;
\item[$(iii)$]$\varphi^{-}_{2}:\left[4, \overline{\beta}_{2}\right]\to\left[0, \underline{\beta}_{1}\right]$ is strictly monotone increasing.
\end{itemize}
\end{lemma}
\qed

For later use, we emphasise that,

\begin{equation}\label{all114}
\varphi^{\pm}_{1}(\overline{\beta}_{1})=\underline{\beta}_{2},
\end{equation}
\begin{equation}\label{all114bis}
\varphi^{\pm}_{2}(\overline{\beta}_{2})=\underline{\beta}_{1},
\end{equation}

and moreover, for $(\beta_1,\beta_2)\in\mathcal{E}$ there holds:

\begin{equation}\label{all113}
0<\beta_1\leq\overline{\beta}_{1}\quad\mbox{ and }\quad 0<\beta_2\leq\overline{\beta_{2}}.
\end{equation}

Next, by recalling \eqref{37}, we observe that $(\beta_{1,\tau}-2(N+1))$ and $(\beta_{2,\tau}-2)$ cannot be simultaneously negative. Thus we obtain the following description of $\mathcal{E}$:

\begin{lemma}\label{lemmadueall}
The pair $\left(\beta_1,\beta_2\right)\in\mathcal{E}$ if and only if it satisfies one of the following set of conditions:
\begin{equation}\label{all110}
\begin{split}
(i) \quad &\beta_{1,\tau}>2(N+1) \;\text { and } \; \beta_{2,\tau}>2, \quad \beta_1\in\left(\underline{\beta}_{1},\overline{\beta}_{1}\right)\quad\mbox{ and }\quad \beta_2=\varphi^{+}_{1}(\beta_1)\\
&(\mbox{or equivalently,} \quad \beta_2\in\left(\underline{\beta}_{2},\overline{\beta}_{2}\right)\quad\mbox{ and }\quad \beta_1=\varphi^{+}_{2}(\beta_2));
\end{split}
\end{equation}

\begin{equation}\label{all111}
(ii) \quad \beta_{1,\tau}\leq 2(N+1) \; \text{ and }\; \beta_{2,\tau}>2, \, \beta_2\in(4,\overline{\beta}_{2}]\,\mbox{ and }\,\beta_1=\varphi^{-}_{2}(\beta_2)\in(0,\underline{\beta_1}];
\end{equation}

\begin{equation}\label{all112}
(iii) \quad \beta_{1,\tau}> 2(N+1) \; \text{ and  } \; \beta_{2,\tau}\leq 2, \; \beta_1\in(4(N+1),\overline{\beta}_{1}]\,\mbox{ and }\, \beta_2=\varphi^{-}_{1}(\beta_1)\in(0,\underline{\beta_2}].
\end{equation}

\end{lemma}

\begin{proof}
From \eqref{37} we see that
\eqref{all110}-\eqref{all112} cover all the possibilities, and the relative expression for $(\beta_1,\beta_2)$ can be easily verified by virtue of the definition of $\varphi^{\pm}_{1}$ and $\varphi^{\pm}_{2}$.

\end{proof}

Observe that,  \eqref{all110} accounts for the conditions \eqref{33} and \eqref{35}. In order to account also for \eqref{34}, we search for the intersections of the set $\mathcal{E}$ in \eqref{5234bis} with the lines $\beta_1=4(N+1)$ and $\beta_2=4$ respectively, and find:

\begin{equation}\label{all115}
\begin{cases}
\beta^{2}_{1}+\beta^{2}_{2}-2\tau\beta_{1}\beta_{2}-4(N+1)\beta_1-4\beta_2=0\\
\beta_1=4(N+1) \text{ and } \beta_2>0
\end{cases}\Longleftrightarrow
\beta_2=4+8\tau(N+1)=:\beta^{*}_{2}(\tau)
\end{equation}

and

\begin{equation}\label{all116}
\begin{cases}
\beta^{2}_{1}+\beta^{2}_{2}-2\tau\beta_{1}\beta_{2}-4(N+1)\beta_1-4\beta_2=0\\
\beta_2=4 \text{ and } \beta_2>0
\end{cases}\Longleftrightarrow
\beta_1=4(N+1)+8\tau=:\beta^{*}_{1}(\tau).
\end{equation}

Also, we define:
\begin{equation}\label{all117}
\beta^{**}_{1}(\tau):=2\tau\beta^{*}_{2}(\tau)=8\tau\left(1+2\tau(N+1)\right), \quad \beta^{**}_{2}(\tau):=2\tau\beta^{*}_{1}(\tau)=8\tau\left(N+1+2\tau\right),
\end{equation}

and observe that, not only $(4(N+1),\beta^{*}_{2}(\tau)) \text{ and } (\beta^{*}_{1}(\tau),4)\in\mathcal{E},$ but also:

\begin{equation}\label{all118}
\left(\beta^{**}_{1}(\tau),\beta^{*}_{2}(\tau)\right) \text{ and } \left(\beta^{*}_{1}(\tau),\beta^{**}_{2}(\tau)\right)\in\mathcal{E}.
\end{equation}

We expect such values to play a role in identifying the pairs $\left(\beta_1,\beta_2\right)$ for which  \eqref{31}-\eqref{32*} admits a solution. 

\begin{prop}\label{527gir}
We have:
\begin{itemize}
\item[$(i)$] \begin{equation}\label{all119}
\begin{aligned}
\beta^{**}_{1}(\tau)=4(N+1)&\Longleftrightarrow\underline{\beta}_{1}(\tau)=4(N+1) \text{ and } \beta^{*}_{2}(\tau)=\overline{\beta}_{2}(\tau)\\
&\Longleftrightarrow\tau=\dfrac{N+1}{1+\sqrt{1+4(N+1)^{2}}}=:\tau^{(1)}_0.
\end{aligned} 
\end{equation}
In particular, 
\begin{equation}\label{52511}
\beta^{**}_{1}(\tau)-\tau\beta^{*}_{2}(\tau)=2(N+1)\Longleftrightarrow\tau=\tau^{(1)}_{0}.
\end{equation}
\item[$(ii)$]\begin{equation}\label{all120}
\begin{aligned}
\beta^{**}_{2}(\tau)=&4\Longleftrightarrow\underline{\beta}_{2}(\tau)=4,\; \text{ and } \beta^{*}_{1}(\tau)=\overline{\beta}_{1}(\tau)\\
&\Longleftrightarrow \tau=\dfrac{1}{N+1+\sqrt{(N+1)^{2}+4}}=:\tau^{(2)}_0.
\end{aligned}
\end{equation}
In particular, 
\begin{equation}\label{5252}
\beta^{**}_{2}(\tau)-\tau\beta^{*}_{1}(\tau)=2\Longleftrightarrow\tau=\tau^{(2)}_{0}.
\end{equation}
\end{itemize}
Moreover:
\begin{equation}\label{259bis}
0<\tau^{(2)}_{0}<\tau^{(1)}_{0}<\frac{1}{2}.
\end{equation}
\end{prop}

\begin{proof}
By straightforward calculations we see that, $\beta_1^{**}(\tau)=4(N+1)$ if and only if $\tau=\tau^{(1)}_{0}$. Furthermore recalling that: $\beta^{**}_{1}(\tau)=2\tau\beta^{*}_{2}(\tau),$ we also find that $4(N+1)-\tau\beta^{*}_{2}(\tau)=2(N+1)$ if and only if $\tau=\tau^{(1)}_{0},$ and therefore: $\underline{\beta}_{1}(\tau)=4(N+1)$ and $\beta^{*}_{2}(\tau)=\overline{\beta}_{2}(\tau)$ if and only if $\tau=\tau^{(1)}_{0}$.

So $(i)$ is established, and $(ii)$ follows exactly in the same way.

Finally to check \eqref{259bis}, we observe that the function $f(t)=\dfrac{1}{t+\sqrt{{t}^{2}+4}}$, is strictly decreasing for $t>0$. Hence, for $N>0$, we have:
$$
\tau^{(2)}_{0}=f(N+1)<f\left(\dfrac{1}{N+1}\right)=\tau^{(1)}_{0}.
$$

\end{proof}

As an immediate consequence of Proposition~\ref{527gir} we obtain:
\begin{coro}\label{coro41}
For every $\tau\in(0,1)$ we have:
\begin{itemize}
\item[$(i)$] $0<\beta^{*}_{1}(\tau)\leq\overline{\beta}_{1}(\tau)$ and equality holds if and only if $\tau=\tau^{(2)}_{0}$.
\item[$(ii)$] $0<\beta^{*}_{2}(\tau)\leq\overline {\beta}_{2}(\tau)$ and equality holds if and only if $\tau=\tau^{(1)}_{0}$.
\end{itemize}
\end{coro}

\begin{proof}
In view of \eqref{all113} and \eqref{all114}, we see that properties $(i)$ and $(ii)$ follow easily from Proposition~\ref{527gir} .

\end{proof}

\begin{coro}\label{42} There holds:
\begin{itemize}
\item[$(i)$] \begin{equation}\label{coro5281} \begin{aligned}
\underline{\beta}_{1}(\tau)<4(N+1)&\Longleftrightarrow 4(N+1)-\tau\beta^{*}_{2}(\tau)>2(N+1)>\beta^{**}_{1}(\tau)-\tau\beta^{*}_{2}(\tau)\\
&\Longleftrightarrow\beta^{**}_{1}(\tau)<4(N+1)\Longleftrightarrow\tau\in\left(0,\tau^{(1)}_{0}\right).
\end{aligned}
\end{equation}
Moreover, for every $\tau\in\left(0,\tau^{(1)}_{0}\right)$ we have:
\begin{equation}\label{coro5282}
\begin{aligned}
&\underline{\beta}_{1}(\tau)<4(N+1)=\varphi^{+}_{2}(\beta^{*}_{2}(\tau))\quad(\mbox{or equivalently }\quad \beta^{*}_{2}(\tau)=\varphi^{+}_{1}(4(N+1))<\overline{\beta}_{2}(\tau))\\
&\mbox{ and }\beta^{**}_{1}(\tau)=\varphi^{-}_{2}(\beta^{*}_{2}(\tau))<\underline{\beta}_{1}(\tau).
\end{aligned}
\end{equation}
\item[$(ii)$] \begin{equation} \begin{aligned}
\underline{\beta}_{2}(\tau)<4&\Longleftrightarrow 4-\tau\beta^{*}_{1}(\tau)>2>\beta^{**}_{2}(\tau)-\tau\beta^{*}_{1}(\tau)\Longleftrightarrow\beta^{**}_{2}(\tau)<4\\
&\Longleftrightarrow\tau\in\left(0,\tau^{(2)}_{0}\right).
\end{aligned}
\end{equation}
\end{itemize}
Moreover, for every $\tau\in\left(0,\tau^{(2)}_{0}\right)$ we have:
\begin{equation}
\begin{aligned}
&\underline{\beta}_{2}(\tau)<4=\varphi^{+}_{1}(\beta^{*}_{1}(\tau))\quad(\mbox{or equivalently }\quad \beta^{*}_{1}(\tau)=\varphi^{+}_{2}(4)<\overline{\beta}_{1}(\tau))\\
&\mbox{ and }\beta^{**}_{2}(\tau)=\varphi^{-}_{1}(\beta^{*}_{1}(\tau))<\underline{\beta}_{2}(\tau).
\end{aligned}
\end{equation}
\end{coro}

\begin{proof}
We prove $(i)$ since $(ii)$ follows in a similar way. The properties in \eqref{coro5281} follow by Proposition~\ref{527gir} and the monotonicity of the functions involved.
To show \eqref{coro5282}, we observe first that, for $\beta_1=4(N+1)$ and $\beta_2=\beta^{*}_{2}(\tau)$ we know that,
$\beta_1-\tau\beta_2>2(N+1)$ and we can also check that, $\beta_2-\tau\beta_1>2$, $\forall \tau\in\left(0,\tau^{(1)}_{0}\right)$.
So, we can use Lemma~\ref{lemmadueall} together with the monotonicity of the function $\varphi^{+}_{1}$ and $\varphi^{+}_{2}$ given in Lemma~\ref{lemmaalln}, to conclude that, $\underline{\beta}_{1}(\tau)=\varphi^{+}_{2}(\overline{\beta}_{2}(\tau))<\varphi^{+}_{2}(\beta^{*}_{2})=4(N+1)$, as claimed. The dual statement in \eqref{coro5282} follows similarly. Furthermore, since for $\beta_1=\beta^{**}_{1}$ and $\beta_2=\beta^{*}_{2}$ we know that, $\beta_1-\tau\beta_2<2(N+1)$, $\forall\tau\in\left(0,\tau^{(1)}_{0}\right)$, and at the same time we can check that, $\beta_2-\tau\beta_1> 2$, so we can use again Lemma~\ref{lemmadueall} to derive: $\beta^{**}_{1}=\varphi^{-}_{2}(\beta^{*}_{2})< \underline{\beta}_{1},$ as claimed.

\end{proof}

\begin{remark}\label{rmk23}
From the above discussion, we see that for $\tau\in(0,\tau^{(2)}_{0}]$ the (necessary) conditions \eqref{33}, \eqref{34} imply the integrability condition \eqref{35}, as it happens for the cooperative case $\tau\leq 0$.  Thus, we expect that, when
$\tau\in(0,\tau^{(2)}_{0}],$ then the same uniqueness and non-degeneracy properties (as established in Theorem \ref{teo2} for $\tau < 0$) should remain valid for radial solutions of $(P_{\tau})$. 
\end{remark}

Next, we wish to describe the relations between $\beta^{**}_{1}(\tau)$ and $\beta^{*}_{2}(\tau)$ when $\tau\in\left(\tau^{(1)}_{0},1\right)$, and $\beta^{**}_{2}(\tau)$ and $\beta^{*}_{1}(\tau)$ when $\tau\in\left(\tau^{(2)}_{0},1\right)$.
Since for $\tau\in\left(\tau^{(1)}_{0},1\right)$ we can always guarantee that,
\begin{equation}\label{equat15}
\beta^{**}_{1}(\tau)-\tau\beta^{*}_{2}(\tau)>2(N+1),
\end{equation}
thus, we need to investigate when we can also realise the second integrability condition:
\begin{equation}\label{equat16}
\beta^{*}_{2}(\tau)-\tau\beta^{**}_{1}(\tau)>2.
\end{equation}
Similar considerations can be applied about the relations between $\beta^{*}_{1}(\tau)$ and $\beta^{**}_{2}(\tau)$.

\begin{prop}\label{prop41}
\begin{itemize}
\item[$(i)$] There exists a {\bf{unique}} value $\tau^{(1)}_{1}\in(\frac{1}{2},\frac{1}{\sqrt{2}})$ such that, \\ $\beta^{*}_{2}(\tau)-\tau\beta^{**}_{1}(\tau)=2$, that is: $\beta^{*}_{2}(\tau)=\underline{\beta}_{2}(\tau)$ and $\beta^{**}_{1}(\tau)=\overline{\beta}_{1}(\tau)\Longleftrightarrow \tau=\tau^{(1)}_{1}$.

More precisely,
$$
\beta^{*}_{2}(\tau)-\tau\beta^{**}_{1}(\tau)>2\Longleftrightarrow \tau\in (0,\tau^{(1)}_{1})
$$
and
$$
0<\beta^{**}_{1}(\tau)<\overline{\beta}_{1}(\tau)\quad \forall \tau\in(0,1)\setminus\left\{\tau^{(1)}_{1}\right\}.
$$
\item[$(ii)$]
There exists a {\bf{unique}} value $\tau^{(2)}_{1}\in(\frac{1}{2},\frac{1}{\sqrt{2}})$ such that $\beta^{*}_{1}(\tau)-\tau\beta^{**}_{2}(\tau)=2(N+1),$ that is: $\beta^{*}_{1}(\tau)=\underline{\beta}_{1}(\tau)$ and $\beta^{**}_{2}(\tau)=\overline{\beta}_{2}(\tau)\Longleftrightarrow \tau=\tau^{(2)}_{1}$.

More precisely,
$$
\beta^{*}_{1}(\tau)-\tau\beta^{**}_{2}(\tau)>2(N+1)\Longleftrightarrow \tau\in (0,\tau^{(2)}_{1})
$$
and
$$
0<\beta^{**}_{2}(\tau)<\overline{\beta}_{2}(\tau)\quad \forall \tau\in(0,1)\setminus\left\{\tau^{(2)}_{1}\right\}.
$$
\end{itemize}
Moreover,
\begin{equation*}
\frac{1}{2} <\tau^{(2)}_{1}<\tau^{(1)}_{1}<\frac{1}{\sqrt{2}}.
\end{equation*}
\end{prop}

\begin{proof}
We establish $(i)$ with the help of the function, 

$$\psi_{1}(\tau)=\frac{1}{2}\left(\beta^{*}_{2}(\tau)-\tau\beta^{**}_{1}(\tau)\right)-1=2(1-2\tau^{2})(1+2\tau(N+1))-1.$$

We readily check that, for $\overline{\tau}=\dfrac{\sqrt{1+6(N+1)^{2}}-1}{6(N+1)}\in(0,\frac{1}{2})$, the function $\psi_1$ is increasing in $(0,\overline{\tau}]$ and decreasing in $(\overline{\tau},+\infty)$. Since $\psi_1(\frac{1}{2})=N+1>0$ while $\psi(\frac{1}{\sqrt{2}})=-1,$ we find a {\underline{unique}} $\tau^{(1)}_{1}\in\left(\frac{1}{2},\frac{1}{\sqrt{2}}\right):$ $\psi_{1}(\tau^{(1)}_{1})=0$, and moreover $\psi_{1}(\tau)>0$, $\forall \tau\in(0,\tau^{(1)}_{1})$. 

Finally from \eqref{all113} we know that, $0<\beta^{**}_{1}(\tau)\leq\overline{\beta}_{1}(\tau)$ and $(i)$ is established.

We observe that $(ii)$ follows in a similar way by considering the function 

$$\psi_{2}(\tau)=\dfrac{1}{2}(\beta^{*}_{1}(\tau)-\tau\beta^{**}_{2}(\tau))-2(N+1)=2(1-2\tau^2)(N+1+2\tau)-(N+1),$$ 

we omit the details.

Thus it remains to show that: $\tau^{(2)}_{1}<\tau^{(1)}_{1}$. To this purpose, we easily check that, $\forall\tau\in\left(\dfrac{1}{2},\dfrac{1}{\sqrt{2}}\right)$ there holds: $\psi_{2}(\tau)<\psi_{1}(\tau).$ 

So the desired conclusion follows by recalling that $\tau^{(1)}_{1}$ is the unique positive zero for $\psi_{1}(\tau)$, while $\tau^{(2)}_{1}$ is the unique positive zero for the function: $\psi_{2}(\tau),$ and $\tau^{(i)}_{1}\in\left(\dfrac{1}{2},\dfrac{1}{\sqrt{2}}\right)$, $i=1,2.$

\end{proof}

\begin{coro}\label{411}
\begin{itemize}
\item[$(i)$] If $\tau\in\left(\tau^{(1)}_{0}, \tau^{(1)}_{1}\right)$, then
\begin{equation}\label{corocoro110}
\beta^{**}_{1}(\tau)=\varphi^{+}_{2}\left(\beta^{*}_{2}(\tau)\right)\quad\mbox{ or equivalently }\quad \beta^{*}_{2}=\varphi^{+}_{1}\left(\beta^{**}_{1}(\tau)\right).
\end{equation}
In particular,
\begin{equation}\label{corocoro111}
4(N+1)<\underline{\beta}_{1}(\tau)<\beta^{**}_{1}(\tau)<\overline{\beta}_{1}(\tau)\quad\mbox{ and }\quad \underline{\beta}_{2}(\tau)<\beta^{*}_{2}(\tau)<\overline{\beta}_{2}(\tau).
\end{equation}
\item[$(ii)$] If $\tau\in\left(\tau^{(2)}_{0},\tau^{(2)}_{1}\right)$ then 
\begin{equation}\label{corocoro112}
\beta^{**}_{2}=\varphi^{+}_{1}(\beta^{*}_{1})\quad\mbox{ or equivalently }\quad \beta^{*}_{1}=\varphi^{+}_{2}(\beta^{**}_{2}).
\end{equation}
In particular, 
\begin{equation}\label{corocoro113}
4<\underline{\beta}_{2}(\tau)<\beta^{**}_{2}(\tau)<\overline{\beta}_{2}(\tau)\quad\mbox{ and }\quad \underline{\beta}_{1}(\tau)<\beta^{*}_{1}(\tau)<\overline{\beta}_{1}(\tau).
\end{equation}
\end{itemize}
\end{coro}

\begin{proof}
If $\tau\in\left(\tau^{(1)}_{0}, \tau^{(1)}_{1}\right)$ then we see that \eqref{equat15}, \eqref{equat16} hold simultaneously, and so we deduce \eqref{corocoro110} as a consequence of \eqref{all110}. Furthermore, as $\tau >\tau^{(1)}_{0}$ then $\beta^{*}_{2}(\tau)<\overline{\beta}_{2}(\tau)$, and so: $\beta^{**}_{1}(\tau)=\varphi^{+}_{2}(\beta^{*}_{2}(\tau))>\varphi^{+}_{2}(\overline{\beta}_{2}(\tau))=\underline{\beta}_{1}(\tau)$. Similarly, as $\tau < \tau^{(1)}_{1}$ then $\beta^{**}_{1}(\tau)<\overline{\beta}_{1}(\tau)$ and so $\beta^{*}_{2}(\tau)=\varphi^{+}_{1}(\beta^{**}_{1}(\tau))>\varphi^{+}_{1}(\overline{\beta}_{1}(\tau))=\underline{\beta}_{2}(\tau)$, and also \eqref{corocoro111} is established.
Part $(ii)$ follows exactly in the same way, with the obvious modifications, we omit the details.  

\end{proof}

Before we discuss our existence results for  problem $(P)_\tau$ we make the following simple observation:
\begin{equation}\label{611}
\beta^{**}_{i}(\tau) < \beta^{*}_{i}(\tau) \Longleftrightarrow \tau \in (0, \frac{1}{2}) \quad i=1,2
\end{equation}
and in particular,
\begin{equation}\label{612}
\beta^{**}_{j}(\tau) = \beta^{*}_{j}(\tau) = 4(N+2) \Longleftrightarrow \tau = \frac{1}{2}.
\end{equation}

In other words, when $\tau = \frac{1}{2}$ and problem $(P)_{\tau=\frac{1}{2}}$ reduces to the 2 X 2 -Toda-system, then for $i=1,2,$ the values $\beta^{**}_{i}$ and $\beta^{*}_{i}$, coincide with the  \underline{only} value allowed by solvablility, see \eqref{147*}.\\
Indeed, we show below that actually, the values $\beta^{**}_{i}$ and $\beta^{*}_{i}$ $i=1,2,$ capture in a crucial way the (radial) solvability for $(P)_\tau$ for any $\tau \in (0,1)$.

To this purpose, and in account of Proposition \ref{527gir}, Proposition \ref{prop41} and \eqref{611}, for $\tau \in (0,1)$, we define:

\begin{equation}\label{613}
\beta_{1}^{-} (\tau) = 
\begin{cases}
4 (N+1), \quad  0 < \tau \leq \tau^{(1)}_{0}\\
\beta^{**}_{1}(\tau), \quad  \tau^{(1)}_{0} < \tau \leq \frac12\\
\beta^{*}_{1}(\tau), \quad  \frac12 < \tau < \tau^{(2)}_{1}\\
\underline{\beta}_{1}(\tau), \quad  \tau^{(2)}_{1} \leq \tau < 1
\end{cases}
 \qquad
\beta_{1}^{+} (\tau) = 
\begin{cases}
\beta^{*}_{1}(\tau), \quad  0 < \tau \leq \frac12\\
\beta^{**}_{1}(\tau), \quad  \frac12 < \tau < \tau^{(1)}_{1}\\
\overline{\beta}_{1}(\tau), \quad  \tau^{(1)}_{1} \leq \tau < 1
\end{cases}
\end{equation}

and similarly,

\begin{equation}\label{613*}
\beta_{2}^{-} (\tau) = 
\begin{cases}
4, \quad  0 < \tau \leq \tau^{(2)}_{0}\\
\beta^{**}_{2} (\tau), \quad  \tau^{(2)}_{0} < \tau \leq \frac12\\
\beta^{*}_{2} (\tau), \quad  \frac12 < \tau < \tau^{(1)}_{1}\\
\underline{\beta}_{2}(\tau), \quad  \tau^{(1)}_{1} \leq \tau < 1
\end{cases}
 \qquad
\beta_{2}^{+} (\tau)= 
\begin{cases}
\beta^{*}_{2} (\tau), \quad  0 < \tau \leq \frac12\\
\beta^{**}_{2} (\tau), \quad  \frac12 < \tau < \tau^{(2)}_{1}\\
\overline{\beta}_{2}(\tau), \quad  \tau^{(2)}_{1} \leq \tau < 1
\end{cases}
\end{equation}

We see that, $\beta_{i}^{\pm}(\tau)$, $i=1,2$ is a continuous functions of $\tau$, and for $\tau \ne \frac{1}{2}$ we have:

\begin{eqnarray}
\label{615} &\max\{ \underline{\beta}_{1}(\tau), 4(N+1)\} \leq \beta_{1}^{-} (\tau)<\beta_{1}^{+} (\tau)\leq\overline{\beta}_{1}(\tau)\\
\label{616} &\max\{ \underline{\beta}_{2}(\tau), 4 \} \leq \beta_{2}^{-} (\tau)<\beta_{2}^{+} (\tau)\leq\overline{\beta}_{2}(\tau),
\end{eqnarray}
while, 
\begin{equation}\label{617}
\text{ if } \tau = \frac{1}{2} \quad \text{ then } \beta^{-}_{i}\left(\frac{1}{2}\right) = \beta^{+}_{i}\left(\frac{1}{2}\right) = 4(N+2), \quad i=1,2.
\end{equation}

We prove the following:

\begin{theo}\label{teoA}
If $\tau\in (0,1)$ and $\tau \ne \frac{1}{2}$, problem $(P)_\tau$ admits a \underline{radial} solution if and only if the pair $(\beta_1, \beta_2)$ satisfies the following:

\begin{equation}\label{618}
\beta_1 \in ( \beta_{1}^{-} (\tau),\beta_{1}^{+} (\tau) ) \quad \text{ and } \quad \beta_{2} = \varphi^{+}_{1} \left( \beta_{1} \right)
\end{equation}

or equivalently,

\begin{equation}\label{619}
\beta_2 \in (\beta_{2}^{-} (\tau),\beta_{2}^{+} (\tau))\quad \text{ and } \quad \beta_{1} = \varphi^{+}_{2}\left(\beta_{2}\right),
\end{equation}

with $\beta_i^{\pm}$ defined in \eqref{613} and \eqref{613*}, $i = 1, 2.$

\end{theo}

The proof of Theorem \ref{teoA} will require several preliminary steps, but first let us point out some of its interesting consequences. Indeed, in view of the definition of $\beta_{j}^{\pm} (\tau)$, $j=1,2$ in \eqref{613} and \eqref{613*}, we deduce 
the following:

\begin{coro}\label{coroA.1}
If $\tau \in (0, \tau^{(2)}_{0}]$ then the conditions \eqref{33}, \eqref{34} imply the integrability condition \eqref{35}, and they  are \underline{necessary} and \underline{sufficient} for the radial solvability of $(P)_\tau$.
\end{coro}

\proof
Simply observe that in this case: $\beta_{1}^{-} (\tau) = 4(N+1)$ and $\beta_{2}^{-} (\tau) = 4$, so the pairs $ (\beta_1, \beta_2)$ satisfying: $\beta_1 \in (\beta_{1}^{-} (\tau),\beta_{1}^{+} (\tau)),\beta_{2} = \varphi^{+}_{1}\left(\beta_{1}\right)$ (or equivalently $\beta_2 \in (\beta_{2}^{-} (\tau),\beta_{2}^{+} (\tau)) , \beta_{1} = \varphi^{+}_{2}\left(\beta_{2} \right)$ simply describe the portion of the ellipse $\mathcal{E}$ in\eqref{5234bis} contained in the quadrant: $\beta_1>4(N+1)$ and $\beta_2>4$.
\end{proof}

As already pointed out in Remark \ref{rmk23}, when $\tau \in (0, \tau^{(2)}_{0})$ we expect that problem $(P)_\tau$ should keep also the same uniqueness and non-degeneracy properties established in \cite{pot2} for cooperative systems, and which apply here 
for $\tau < 0.$

\begin{coro}\label{corA2} 
If $\tau \in [\tau^{(1)}_{1},1)$ then the conditions  \eqref{33}, \eqref{35} imply the condition \eqref{34} and they are necessary and sufficient for the solvability of $(P)_\tau$.
\end{coro}

\proof In this case, Theorem \ref{teoA} ensures the existence  of a (radial) solution for $(P)_\tau$ for any pairs $(\beta_1,\beta_2)$ such that $\beta_1 \in (\underline{\beta}_{1}(\tau),\overline{\beta}_{1}(\tau))$ and $\beta_{2}=\varphi^{+}_{1}(\beta_{1})$ (or equivalently $\beta_2 \in (\underline{\beta}_{2}(\tau),\overline{\beta}_{2}(\tau))$ and $\beta_{1}=\varphi^{+}_{2}(\beta_{2})$), which fulfil exactly the portion of $\mathcal{E}$ in\eqref{5234bis} subject to the constraint \eqref{35}. Furthermore, by Corollary \ref{42}
we see that the condition \eqref{34} is automatically satisfied in this case.

\end{proof}

\begin{remark}\label{rmk23a}
Since for  $\tau\in [\tau^{(1)}_{1},1)$ the condition \eqref{34} is ensured automatically by \eqref{33} and \eqref{35} which also guarantee the existence of a radial solution (see below), thus we suspect that, whenever solvable, problem $(P_{\tau})$ should admits \underline{only} radial solutions.
\end{remark}

To establish the existence of radial solutions as claimed in Theorem \ref{teoA}, we analyse the Cauchy problem: \eqref{39tris}-\eqref{312}. To be more precise, for $\alpha \in \mathbb{R}$ we let $(v_1 (r,\alpha), v_2 (r,\alpha))$ be the unique (global) solution of the Cauchy problem:

\begin{equation}\label{6112}
\begin{cases}
-(rv'_{1})'=r^{2N+1}e^{v_1}-\tau re^{v_2}, \quad r >0\\
-(rv'_{2})'=re^{v_2}-\tau r^{2N+1}e^{v_1}, \quad r >0\\
v_{1}(0)=\alpha \; \quad v'_{1}(0)=0\\
v_{2}(0)=0 \; \quad v'_{2}(0)=0
\end{cases}
\end{equation}

and set,

\begin{equation}\label{6113}
\beta_1 (\alpha) = \int^{\infty}_{0}r^{2N+1}e^{v_1}dr , \quad \beta_2 (\alpha) = \int^{\infty}_{0} re^{v_2}dr.
\end{equation} 

Clearly, the pair $(\beta_1 (\alpha), \beta_2 (\alpha))$ satisfy \eqref{33}, \eqref{34} and \eqref{35}, and in particular, $\beta_i (\alpha)$ is uniformly bounded with respect to $\alpha \in \mathbb{R},$ for $i=1,2.$

On the basis of Proposition \ref{527gir}, Proposition \ref{prop41} and their consequences, we define:

\begin{equation}\label{6113a}
\beta_{1,-\infty} (\tau) = 
\begin{cases}
\beta^{*}_{1}(\tau) \quad \text{ for } \tau \in (0, \tau^{(2)}_{1})\\
\underline{\beta}_{1}(\tau) \quad \text{ for } \tau \in [\tau^{(2)}_{1},1)
\end{cases}
\end{equation}

and,

\begin{equation}\label{6113b}
\beta_{2,-\infty} (\tau) = \varphi^{+}_{1}(\beta_{1,-\infty}(\tau)) =
\begin{cases}
4 \quad \text{ for } \tau \in (0, \tau^{(2)}_{0}]\\
\beta^{**}_{2}(\tau) \quad \text{ for } \tau \in (\tau^{(2)}_{0}, \tau^{(2)}_{1})\\
\overline{\beta}_{2}(\tau) \quad \text{ for } \tau \in [\tau^{(2)}_{1},1)
\end{cases}
\end{equation}

similarly, we let:

\begin{equation}\label{6113c}
\beta_{2,+\infty} (\tau) = 
\begin{cases}
\beta^{*}_{2}(\tau) \quad \text{ for } \tau \in (0, \tau^{(1)}_{1})\\
\underline{\beta}_{2}(\tau) \quad \text{ for } \tau \in [\tau^{(1)}_{1},1)
\end{cases}
\end{equation}

and,

\begin{equation}\label{6113d}
\beta_{1,+\infty} (\tau) = \varphi^{+}_{2}(\beta_{2,+\infty}(\tau)) =
\begin{cases}
4(N+1) \quad \text{ for } \tau \in (0, \tau^{(1)}_{0}]\\
\beta^{**}_{1}(\tau) \quad \text{ for } \tau \in (\tau^{(1)}_{0}, \tau^{(1)}_{1})\\
\overline{\beta}_{1}(\tau) \quad \text{ for } \tau \in [\tau^{(1)}_{1},1)
\end{cases}
\end{equation}

It is important to observe that,

\begin{equation}\label{6113*}
\beta_{i}^{-} (\tau) = \min \{ \beta_{i,-\infty} (\tau),  \beta_{i,+\infty} (\tau) \} \text{ and } \beta_{i}^{+} (\tau) = \max \{ \beta_{i,-\infty} (\tau),  \beta_{i,+\infty} (\tau) \}, \; i=1,2.
\end{equation}

The "existence" part of Theorem \ref{teoA} will be covered by the following:

\begin{theo}\label{teoB}
For all $\tau \in (0,1)$ there holds:
\begin{eqnarray}
\label{6114} (i) \; \lim_{\alpha \to +\infty} \beta_{1,\alpha} = \beta_{1,+\infty} (\tau) \quad \text{ and } \quad \lim_{\alpha \to +\infty} \beta_{2,\alpha} = \beta_{2,+\infty}\\
\label{6115} (ii) \lim_{\alpha \to -\infty} \beta_{1,\alpha} = \beta_{1,-\infty} (\tau) \quad \text{ and } \quad \lim_{\alpha \to -\infty} \beta_{2,\alpha} = \beta_{2,-\infty}
\end{eqnarray}
\end{theo}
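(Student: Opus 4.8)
The plan is to reduce the radial system to an autonomous planar second-order system by the Emden--Fowler change of variables $t=\log r$, $y_1(t)=v_1(r)+2(N+1)\log r$ and $y_2(t)=v_2(r)+2\log r$. A direct computation turns \eqref{6112} into
\[
\ddot y_1=-e^{y_1}+\tau e^{y_2},\qquad \ddot y_2=-e^{y_2}+\tau e^{y_1},
\]
with $e^{y_1}=r^{2(N+1)}e^{v_1}$ and $e^{y_2}=r^2e^{v_2}$, so that $\beta_1=\int_{\mathbb R}e^{y_1}\,dt$ and $\beta_2=\int_{\mathbb R}e^{y_2}\,dt$. The initial conditions \eqref{312} translate into the asymptotics $\dot y_1\to 2(N+1)$, $\dot y_2\to 2$, $y_i\to-\infty$ as $t\to-\infty$, with the parameter $\alpha$ entering only through the relative offset $y_1-2(N+1)t\to\alpha$, $y_2-2t\to 0$; equivalently $\alpha$ fixes the $t$-location $\approx-\alpha/(2(N+1))$ at which $e^{y_1}$ becomes of order one relative to the fixed $O(1)$ activation of $e^{y_2}$ near $t=0$. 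The uniform bound \eqref{313} (i.e.\ $e^{y_1}+e^{y_2}$ bounded) and the Harnack inequality of Lemma~\ref{harn} supply the compactness needed to pass to limits, while the identity \eqref{39}, rewritten with the partial masses $f_1(r)=\int_0^r s^{2N+1}e^{v_1}\,ds$, $f_2(r)=\int_0^r s\,e^{v_2}\,ds$ as $G(r):=f_1^2+f_2^2-2\tau f_1f_2-4(N+1)f_1-4f_2=-2(e^{y_1}+e^{y_2})\le 0$, will serve as the main bookkeeping device: the path $(f_1(r),f_2(r))$ starts at $(0,0)$, stays in $\{G\le 0\}$, and re-touches the conic $\{G=0\}$ exactly where both exponentials vanish.

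For $\alpha\to+\infty$ the two activations separate (the gap between $t\approx-\alpha/(2(N+1))\to-\infty$ and the $y_2$-activation at finite $t$ diverges), so the orbit decomposes into well-separated single--equation Liouville bubbles joined by long ``neck'' intervals on which both exponentials are negligible and the slopes $\dot y_i$ are essentially constant. On each bubble the subordinate exponential is exponentially small, so the profile converges to a solution of $\ddot y=-e^y$; the first integral $\tfrac12\dot y^2+e^y=$const yields the quantization rule that a bubble entered with slope $s>0$ is left with slope $-s$, contributes mass $2s$ to the corresponding $\beta_i$, and kicks the other slope by $\tau\cdot 2s$. Starting from $(\dot y_1,\dot y_2)=(2(N+1),2)$ this generates an alternating cascade $y_1,y_2,y_1,\dots$: the first $y_1$-bubble has mass $4(N+1)$ and raises $\dot y_2$ to $2+4\tau(N+1)$; the ensuing $y_2$-bubble has mass $\beta^*_2(\tau)$ and leaves $\dot y_1=-2(N+1)+\tau\beta^*_2(\tau)$, which is positive exactly when $\tau>\tau^{(1)}_0$, precisely when a second $y_1$-bubble is born and raises the first mass to $2\tau\beta^*_2(\tau)=\beta^{**}_1(\tau)$; the next $y_2$-bubble appears exactly when $\tau>\tau^{(1)}_1$. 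Reading off the slopes at the thresholds \eqref{all119}, \eqref{all120} and Proposition~\ref{prop41}, the cascade terminates at $(4(N+1),\beta^*_2)$ for $\tau\le\tau^{(1)}_0$, at $(\beta^{**}_1,\beta^*_2)$ for $\tau^{(1)}_0<\tau<\tau^{(1)}_1$, and accumulates at the boundary point $(\overline\beta_1,\underline\beta_2)$ of $\mathcal E$ (where $\beta_2-\tau\beta_1=2$, i.e.\ exit slope $\dot y_2\to0$) for $\tau\ge\tau^{(1)}_1$; this is exactly $(\beta_{1,+\infty},\beta_{2,+\infty})$ in \eqref{6113d}, \eqref{6113c}. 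Equivalently, in the $G$-picture the partial-mass path hops through the successive conic points $(0,0),(4(N+1),0),(4(N+1),\beta^*_2),(\beta^{**}_1,\beta^*_2),\dots$ recorded in \eqref{all115} and \eqref{all118}. The case $\alpha\to-\infty$ is symmetric: the $y_1$-activation moves to $t\to+\infty$, so the cascade begins with a $y_2$-bubble of mass $4$, followed by a $y_1$-bubble of mass $\beta^*_1(\tau)$, with a second $y_2$-bubble born at $\tau>\tau^{(2)}_0$ and a further $y_1$-bubble at $\tau>\tau^{(2)}_1$, yielding $(\beta^*_1,4)$, then $(\beta^*_1,\beta^{**}_2)$, and finally $(\underline\beta_1,\overline\beta_2)$, in agreement with \eqref{6113a}, \eqref{6113b}. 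In each regime one argues first along an arbitrary sequence $\alpha_n\to\pm\infty$, extracts the bubble decomposition and identifies the limit; since the limiting pair depends only on $\tau$ and not on the sequence, the full limits \eqref{6114} and \eqref{6115} follow.

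I expect the main difficulty to lie in the rigorous justification of the bubble separation and of the neck estimates: one must show that along each neck no mass is created or lost (so the incoming slope of each bubble equals the outgoing slope of the previous one), that the subordinate exponential is genuinely negligible on each bubble's scale, and that successive concentration scales diverge. This is where the a priori bound \eqref{313}, the Harnack inequality of Lemma~\ref{harn}, and the monotonicity of $f_1,f_2$ together with the sign of $G$ in \eqref{39} must be used quantitatively. The most delicate point is the threshold regime $\tau\ge\tau^{(1)}_1$ (resp.\ $\tau\ge\tau^{(2)}_1$), where the cascade is no longer finite and the orbit approaches a degenerate exit with $\dot y_2\to0^-$ (resp.\ $\dot y_1\to0^-$): rather than summing infinitely many vanishing bubbles, I would argue directly that $\beta_2-\tau\beta_1\to 2$ (resp.\ $\beta_1-\tau\beta_2\to 2(N+1)$), using that $(\beta_1(\alpha),\beta_2(\alpha))\in\mathcal E$ together with the appropriate monotonicity in $\alpha$ to force convergence to the unique boundary point of the admissible arc, namely $(\overline\beta_1,\underline\beta_2)$ (resp.\ $(\underline\beta_1,\overline\beta_2)$).
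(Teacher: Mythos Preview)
Your Emden--Fowler/cascade picture is correct at the heuristic level and is a genuinely different organisation of the argument from the paper's. The paper works by PDE blow-up analysis at \emph{only two} scales: it rescales at the $v_2$-maximum scale $\varepsilon_{2,n}$, shows (via Theorem~\ref{teobt}) that $r^{2N}e^{v^*_{1,n}}\rightharpoonup 8\pi(N+1)\delta_0$ while $v^*_{2,n}\to v^*_2$ solving a singular Liouville equation with mass $\beta^*_2(\tau)$, and then \emph{never} tracks your third $y_1$-bubble explicitly. Instead, for $\tau\in(\tau^{(1)}_0,\tau^{(1)}_1)$ it uses Fatou to get $\beta_2\ge\beta^*_2$, hence $\beta_1=\varphi^+_2(\beta_2)\le\varphi^+_2(\beta^*_2)=\beta^{**}_1$ by the monotonicity of $\varphi^+_2$, and feeds this upper bound on $\beta_1$ into a tail-decay estimate on $v^*_{2,n}$ at infinity (which is exactly your ``no mass on the last neck'') to force $\beta_2=\beta^*_2$; the value of $\beta_1$ then drops out of the Pohozaev constraint. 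So the constraint does the work that in your scheme would require making the third-scale analysis rigorous. Your viewpoint explains more transparently \emph{why} the thresholds $\tau^{(1)}_0,\tau^{(1)}_1$ appear (as birth of successive bubbles), while the paper's is more economical analytically.

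There is, however, a genuine gap in your treatment of the regime $\tau\ge\tau^{(1)}_1$. You propose to ``use the appropriate monotonicity in $\alpha$'' to pin the limit at $(\overline\beta_1,\underline\beta_2)$, but monotonicity of $\alpha\mapsto(\beta_1(\alpha),\beta_2(\alpha))$ is nowhere established, is not claimed by the paper, and even if true would not by itself force the limit to be the \emph{endpoint} of the admissible arc rather than an interior point. The paper's argument here is a contradiction via Kelvin transform: assume $\beta_2-\tau\beta_1>2$, pass to the system for $\hat v_{i,n}(r)=v^*_{i,n}(1/r)+\beta^n_{i,\tau}\log(1/r)$, identify the limiting singular Liouville problem for $\hat v_2$, and deduce a decay $v^*_{2,n}(r)\le -(2+\varepsilon)\log r+C_\varepsilon$ for large $r$, which would again force $\beta_2=\beta^*_2$; but for $\tau>\tau^{(1)}_1$ one already has $\beta_2>\beta^*_2$ strictly (since $(\beta^{**}_1,\beta^*_2)$ violates $\beta_2-\tau\beta_1>2$), a contradiction. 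If you want to stay within your dynamical framework you must either prove the monotonicity you invoke, or show directly that the (now infinite) cascade accumulates at the endpoint with a uniform-in-$n$ control on the tail mass; neither is routine, and the paper's Kelvin-transform contradiction is the cleaner route.
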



By virtue of \eqref{612} and \eqref{147*}, we know already that Theorem \ref{teoB}  holds for the Toda system $\tau=1/2$; so we only need to establish \eqref{6114} and \eqref{6115} when,

$$
\tau \in (0,1) \; \text{ and } \tau \ne \frac{1}{2}.
$$

We start to analyse \eqref{6114}, and then handle \eqref{6115} similarly.  Actually, we are going to establish \eqref{6114} along any sequence: 

$$
\alpha_n \to +\infty,
$$
where we let (up to a subsequence):

\begin{equation}\label{6117}
\beta_{1,n} := \beta_1 (\alpha_n) \to \beta_1 \; \text{ and } \; \beta_{2,n} := \beta_2 (\alpha_n) \to \beta_2,
\end{equation}
with,

\begin{equation}\label{6119a}
\max \{ 4(N+1), \underline{\beta}_1 (\tau) \} \leq \beta_1 \leq \overline{\beta}_1 \; \text{ and } \beta_2 = \varphi_{1}^{+} (\beta_1),
\end{equation}

or equivalently, 

\begin{equation}\label{6119b}
\max \{ 4, \underline{\beta}_2 (\tau) \} \leq \beta_2 \leq \overline{\beta}_2 \; \text{ and } \beta_1 = \varphi_{2}^{+} (\beta_2).
\end{equation}
Also, to simplify notations, we let: 

$$
v_{1,n} (r) := v_1 (r, \alpha_n) \quad \text{ and } \quad v_{2,n} := v_2 (r,\alpha_n),
$$

and recall that,

$$v_{1,n} (0) = \alpha_n \to +\infty, \quad \text{ and } \quad v_{2,n} (0) =0.$$

Therefore, we see that $v_{1,n}$ admits a blow-up point at the origin, in the sense of Brezis-Merle \cite{bm};  see also \cite{bt, tar3} for more details.\\
 Actually, as a first important information, we prove that also $v_{2,n}$ must admit a blow-up point at the origin.

To this purpose, we recall a well known result established in \cite{bt} concerning blow-up sequences for  "`singular"' Liouville equations, which we formulate in a form convenient for our purposes.\\ 
Therefore, for $p>1$, we consider $v_n \in W^{2,p} (B_R)$ and $\sigma_n \in L^{p} (B_R)$, satisfying:

\begin{equation}\label{6119*} 
\begin{cases}
-\Delta v_n = |x|^{2N} e^{v_n} + \sigma_n \quad \text{ in } B_R\\
\sup_{B_R} v_n \to +\infty\\
(\int_{B_R} |x|^{2N} e^{v_n}dx) + \| \sigma_n \|_{L^{p} (B_R)}  \leq C\\
\sup_{\partial B_R} v_n - \inf_{\partial B_R} v_n \leq C
\end{cases}
\end{equation}

with $N > 0$ and a suitable constant $C>0$. Assume in addition that $v_n$ admits the origin as its only blow-up point in $B_R$, in the sense that:

\begin{equation}\label{6119**}
\forall \; \varepsilon > 0 \quad  \exists \; C_\varepsilon > 0 : \quad \sup_{\varepsilon \leq |x| \leq R} v_n \leq C_\varepsilon.
\end{equation}

We have:

\begin{theo}[\cite{bt}] \label{teobt}
Let $N > 0$ and $p>1$. If $v_n$ satisfies \eqref{6119*} and \eqref{6119**} then, along a subsequence, the following holds: 
$$|x|^{2N} e^{v_n} \rightharpoonup 8\pi(N+1)\delta_0  \quad \text { as } n \to +\infty,$$
weakly in the sense of measure in $B_R.$ 
\end{theo}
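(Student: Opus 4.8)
The plan is to run the standard concentration--compactness machinery for singular Liouville equations and then to pin down the concentration mass through a Pohozaev identity. First I would apply the Brezis--Merle alternative \cite{bm} to the sequence $v_n$ solving the first line of \eqref{6119*}, viewing $|x|^{2N}$ as a fixed bounded weight and $\sigma_n$ as an $L^p$-perturbation with $p>1$. Since $\sup_{B_R}v_n\to+\infty$, the "locally bounded" alternative is excluded; the single blow-up hypothesis \eqref{6119**} restricts the blow-up set to $S=\{0\}$; and the boundary oscillation control $\sup_{\partial B_R}v_n-\inf_{\partial B_R}v_n\le C$ prevents any mass from escaping to, or concentrating on, $\partial B_R$. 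This places us in the genuinely singular alternative: along a subsequence $v_n\to-\infty$ in $C^{0}_{loc}(B_R\setminus\{0\})$ and $|x|^{2N}e^{v_n}\rightharpoonup\beta\,\delta_0$ for some $\beta>0$; in particular the absolutely continuous part of the limiting measure vanishes.

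It then remains to identify $\beta=8\pi(N+1)$. The second step is to record the precise behaviour of $v_n$ near $\partial B_\delta$ for small fixed $\delta>0$. Since all the mass concentrates at the origin, the Green representation for $v_n$ on $B_\delta$ together with the boundary oscillation bound yields $v_n(x)=-\tfrac{\beta}{2\pi}\log|x|+O(1)$ with $r\,\partial_r v_n\to-\tfrac{\beta}{2\pi}$ and $\partial_\theta v_n=o(1)$ uniformly on $\partial B_\delta$ as $n\to\infty$. This radial-leading-order control of the full gradient on circles is exactly what makes the boundary terms in the Pohozaev identity computable.

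The third step is the Pohozaev identity itself. Multiplying the first equation in \eqref{6119*} by $x\cdot\nabla v_n$ and integrating over $B_\delta$, using the divergence identity already employed for \eqref{25}, gives
\begin{equation*}
\delta\int_{\partial B_\delta}\left(\tfrac12|\nabla v_n|^2-(\partial_\nu v_n)^2\right)d\sigma=\delta\int_{\partial B_\delta}|x|^{2N}e^{v_n}\,d\sigma-2(N+1)\int_{B_\delta}|x|^{2N}e^{v_n}+\int_{B_\delta}\sigma_n\,(x\cdot\nabla v_n).
\end{equation*}
Letting $n\to\infty$: the left side tends to $-\beta^{2}/4\pi$ by the asymptotics of Step 2; the first term on the right vanishes because $v_n\to-\infty$ on $\partial B_\delta$; the second tends to $-2(N+1)\beta$ since the whole mass $\beta$ sits in $B_\delta$; and the $\sigma_n$-term is negligible thanks to $\|\sigma_n\|_{L^p}\le C$ with $p>1$ together with the gradient bound. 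One is left with $\beta^{2}/4\pi=2(N+1)\beta$, hence $\beta=8\pi(N+1)$, as claimed. As a consistency check and an alternative route to the lower bound $\beta\ge 8\pi(N+1)$, one may rescale at the origin by the weight-adapted factor $2(N+1)$ and pass to an entire solution of $-\Delta w=|y|^{2N}e^{w}$ on $\mathbb{R}^2$, whose total mass is $8\pi(N+1)$ by the classification in \cite{pt}.

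I expect the main obstacle to be Step 2: establishing that, on small circles about the origin, the gradient of $v_n$ is asymptotically radial with the sharp logarithmic coefficient $-\beta/2\pi$ and negligible angular part. Without this the boundary integral in the Pohozaev identity cannot be evaluated and the exact quantization would not follow; controlling it requires combining the concentration of mass at a single interior point with potential (Green's function) estimates and the boundary oscillation bound in order to rule out surviving multipole contributions. A secondary technical point is the uniform smallness of the $\sigma_n$-term, which relies on the subcritical integrability $p>1$.
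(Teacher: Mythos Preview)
The paper does not give its own proof of this statement: Theorem~\ref{teobt} is simply quoted from \cite{bt} (Bartolucci--Tarantello) and closed with a \qed, so there is nothing to compare your argument against within the paper itself. What you have sketched is in fact the strategy of \cite{bt}: run the Brezis--Merle alternative with the singular weight $|x|^{2N}$ and the $L^{p}$-perturbation $\sigma_{n}$ to force concentration at the origin, and then quantize the mass via a Pohozaev-type identity on shrinking balls; the rescaling/classification route you mention at the end is also used in \cite{bt} as a complementary ingredient.

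Your outline is correct in spirit, and your Pohozaev bookkeeping is right. The point you flag as the main obstacle is indeed the crux in \cite{bt}: obtaining, uniformly in $n$, the sharp first-order asymptotics $r\partial_{r}v_{n}\to-\beta/2\pi$ and $\partial_{\theta}v_{n}\to 0$ on $\partial B_{\delta}$, together with a control of $x\cdot\nabla v_{n}$ in $B_{\delta}$ good enough to kill the term $\int_{B_{\delta}}\sigma_{n}\,(x\cdot\nabla v_{n})$. In \cite{bt} this is handled by Green-representation/potential estimates (using the boundary oscillation bound in \eqref{6119*}) which give $x\cdot\nabla v_{n}$ uniformly bounded in $L^{q}(B_{\delta})$ for every $q<\infty$, so that H\"older's inequality with $\|\sigma_{n}\|_{L^{p}}\le C$, $p>1$, makes this term $o_{\delta}(1)$. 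If you present this as a proof rather than a proposal, that is the step where you should supply the details; the rest follows exactly as you wrote.
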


\qed

Going back to our sequence $ (v_{1,n}, v_{2,n}),$  we can use Lemma \ref{lem32} and Lemma \ref{harn}, in order to claim the following uniform estimates:

\begin{equation}\label{6120}
\forall \; r > 0: \quad v_{1,n} (r) + 2(N+1) \log r \leq C_0 \quad v_{2,n} (r) + 2 \log r \leq C_0,
\end{equation}

\begin{equation}\label{6121}
\forall \; 0 < r_0 < R_0 : \sup_{[r_0,R_0]} v_{j,n} (r) \leq \gamma \inf_{[r_0,R_0]} v_{j,n} (r) + C(1+ \log r_0 ), \quad j=1,2,
\end{equation}

with suitable  constants: $C_0=C_0(\tau,N)>0$, $\gamma=\gamma(\frac{r_0}{R_0}) \in (0,1)$ and $C=C(\tau, N,  \frac{r_0}{R_0})>0.$ \\

From \eqref{6120}, we see that $v_{i,n}$ attains its maximum value at some $ R_{i,n}\in [0, \infty),$ and therefore, 

\begin{equation}\label{6121a}
v_{i,n} (R_{i,n}) = \max_{0 \leq r < \infty} v_{i,n} (r), \quad i=1,2, 
\end{equation}

and, by taking also into account \eqref{6120}, we have:

\begin{equation}\label{6121b}
v_{1,n} (R_{1,n}) \to +\infty \; \text{ and } R_{1,n} \to 0, \quad \text{ as } n \to +\infty. 
\end{equation}

We show that the same property holds for $v_{2,n}$.

\begin{lemma}\label{lem4*}
The sequence $v_{2,n}$ admits a blow up point at the origin, and more precisely: 
$$v_{2,n} (R_{2,n}) \to +\infty \text{ and } R_{2,n} \to 0, \quad \text{ as } n \to +\infty.$$
\end{lemma}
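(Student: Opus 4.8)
The plan is to argue by contradiction, after first reducing the statement to the single assertion $v_{2,n}(R_{2,n})\to+\infty$. Indeed, once this is known, the uniform bound \eqref{6120} gives $v_{2,n}(R_{2,n})\le C_0-2\log R_{2,n}$, so that $\log R_{2,n}\le\frac12\big(C_0-v_{2,n}(R_{2,n})\big)\to-\infty$ and hence $R_{2,n}\to0$, exactly as \eqref{6121b} was deduced for $v_{1,n}$. Since $v_{2,n}(R_{2,n})=\max_{r\ge0}v_{2,n}(r)\ge v_{2,n}(0)=0$, the negation of the claim is that, along a subsequence, $v_{2,n}\le M$ on $[0,\infty)$ for some constant $M$; this is the hypothesis I shall contradict.

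Under this hypothesis the coupling term in the first equation of \eqref{6112} becomes harmless. Writing that equation as $-\Delta v_{1,n}=|x|^{2N}e^{v_{1,n}}+\sigma_n$ with $\sigma_n=-\tau e^{v_{2,n}}$, the bound $v_{2,n}\le M$ yields $\|\sigma_n\|_{L^p(B_R)}\le\tau e^{M}|B_R|^{1/p}$ uniformly in $n$. The remaining hypotheses of Theorem \ref{teobt} hold as well: $\int_{B_R}|x|^{2N}e^{v_{1,n}}=2\pi\beta_{1,n}$ is uniformly bounded, $\sup_{B_R}v_{1,n}\to+\infty$ with the origin as the only blow-up point by \eqref{6120} and \eqref{6121b}, while the boundary oscillation $\sup_{\partial B_R}v_{1,n}-\inf_{\partial B_R}v_{1,n}$ vanishes by radial symmetry. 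Hence $|x|^{2N}e^{v_{1,n}}\rightharpoonup 8\pi(N+1)\delta_0$, and consequently, with $f_1(r)=\int_0^r s^{2N+1}e^{v_{1,n}}\,ds$ as in \eqref{219},
\[
f_1(r)=\frac{1}{2\pi}\int_{B_r}|x|^{2N}e^{v_{1,n}}\,dx\longrightarrow 4(N+1)\quad\text{for every fixed }r>0.
\]

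To reach the contradiction I use the radial identity $r\,v_{2,n}'(r)=\tau f_1(r)-f_2(r)$ from \eqref{220} (with $f_2(r)=\int_0^r s\,e^{v_{2,n}}\,ds$), together with $v_{2,n}(0)=0$, so that $v_{2,n}(r)=\int_0^r s^{-1}\big(\tau f_1(s)-f_2(s)\big)\,ds$. Two elementary facts drive the estimate. First, $f_2(s)\le\frac12 e^{M}s^2$, so $f_2$ is uniformly small near the origin. Second, defining $s_n$ by $f_1(s_n)=N+1$ (possible since $f_1$ increases from $0$ to $\beta_{1,n}>4(N+1)$), the concentration just proved forces $s_n\to0$: otherwise $f_1(\varepsilon_0)\le N+1$ along a subsequence for some $\varepsilon_0>0$, contradicting $f_1(\varepsilon_0)\to4(N+1)$. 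Now fix $r_0>0$ small enough that $\frac12 e^{M}r_0^2\le\frac12\tau(N+1)$, and take $n$ so large that $s_n<r_0$. On $[0,s_n]$ I discard the nonnegative term $\tau f_1$ and bound $v_{2,n}(s_n)\ge-\int_0^{s_n}s^{-1}f_2(s)\,ds\ge-\frac14 e^{M}s_n^2\ge-C$; on $[s_n,r_0]$ the monotonicity $f_1\ge f_1(s_n)=N+1$ and the smallness of $f_2$ give $s^{-1}\big(\tau f_1-f_2\big)\ge\frac12\tau(N+1)s^{-1}$, whence $v_{2,n}(r_0)\ge -C+\frac12\tau(N+1)\log(r_0/s_n)\to+\infty$. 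This contradicts $v_{2,n}(r_0)\le M$ and proves the claim.

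The main obstacle is that Theorem \ref{teobt} cannot be applied to $v_{2,n}$ directly, since the coupling term $-\tau|x|^{2N}e^{v_{1,n}}$ is an $L^1$ but concentrating, hence not $L^p$-bounded, source once $v_{1,n}$ blows up. The contradiction hypothesis is precisely the device that renders the analogous coupling term in the $v_1$-equation bounded in $L^p$; this makes the clean concentration $f_1\to4(N+1)$ available, which is then fed back through the very coupling to force $v_{2,n}$ upward. The only points demanding care are the verification of the $L^p$ bound on $\sigma_n$ and the use of $\tau f_1\ge0$ to keep $v_{2,n}$ bounded below at the half-mass scale $s_n$.
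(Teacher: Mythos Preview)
Your proof is correct and proceeds along a genuinely different path from the paper's. Both arguments start the same way: assume $v_{2,n}\le M$ along a subsequence, observe this makes $\sigma_n=-\tau e^{v_{2,n}}$ an $L^p$-bounded source, and invoke Theorem~\ref{teobt} to obtain $|x|^{2N}e^{v_{1,n}}\rightharpoonup 8\pi(N+1)\delta_0$. From there they diverge. The paper uses the Alexandrov--Bol inequality (Theorem~\ref{ab}) to bound $v_{2,n}$ from \emph{below} near the origin, then passes to the limit in the second equation via elliptic estimates to produce a function $v_2$ solving $-\Delta v_2=e^{v_2}-8\pi(N+1)\tau\,\delta_0$ while remaining bounded in $B_1$; the presence of the Dirac source forces a logarithmic singularity, and the contradiction lies there. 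You instead stay entirely at the ODE level: the concentration of $f_1$ near the origin, fed through the radial identity $v_{2,n}(r)=\int_0^r s^{-1}\big(\tau f_1(s)-f_2(s)\big)\,ds$, gives an explicit lower bound $v_{2,n}(r_0)\gtrsim \tau(N+1)\log(r_0/s_n)\to+\infty$, contradicting the upper bound directly.

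Your route is more elementary in that it avoids the Alexandrov--Bol inequality and the compactness/limit-passage machinery altogether, and it is more quantitative, yielding the precise logarithmic growth. The paper's route has the advantage of landing on a clean PDE obstruction (``bounded solution with a Dirac source'') that is a standard endpoint in blow-up analysis and would transplant more readily to a non-radial setting, whereas your argument leans hard on the integrated ODE form \eqref{220} and the explicit control of $f_2$ near $0$.
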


\proof
We argue by contradiction and suppose that, for suitable $r_0>0$ and $C_0>0$ we have:

\begin{equation}\label{6122a}
v_{2,n} (r) \leq C_0, \quad \forall \, 0 < r < r_0.
\end{equation}
Furthermore, we can take $r_0>0$ smaller if necessarily, in order to ensure that, 

$$\int_{0}^{r_0} r e^{v_{2,n} (r)} \, dr \leq 2.$$

To proceed further, we recall the following consequence of the Alexandrov-Bol's inequality (see \cite{ban}), as pointed out by Suzuki in Proposition 4 of \cite{s}:

\begin{theo}\label{ab}
Let $\Omega\subset\mathbb{R}^2$ be a bounded domain with smooth boundary $\partial\Omega$. If $p\in C^{2}(\Omega)\cap C^{0}(\overline{\Omega})$ satisfies
\begin{equation}\label{logp}
-\Delta {\rm{log}}p\leq p\quad \mbox{ in }\quad\Omega,
\end{equation}
and $\Sigma:=\displaystyle{\int_{\Omega}p(x)dx}< 8\pi$, then we have:
\begin{equation}\label{alb}
\max_{\overline{\Omega}}p\leq\left(1-\frac{\Sigma}{8\pi}\right)^{-2}\max_{\partial\Omega}p.
\end{equation}
\end{theo}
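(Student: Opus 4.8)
The plan is to reduce this pointwise bound to a first-order differential inequality for the \emph{area function} of the super-level sets of $p$, and then to integrate it; the curvature hypothesis $-\Delta\log p\leq p$ will enter through the underlying Alexandrov--Bol isoperimetric inequality. First I would set $M:=\max_{\partial\Omega}p$ and $p_{\max}:=\max_{\overline\Omega}p$, and for $t\geq M$ consider the super-level sets
\begin{equation*}
\omega_t:=\{x\in\Omega:\ p(x)>t\},\qquad A(t):=\int_{\omega_t}p(x)\,dx .
\end{equation*}
Since $p\leq M\leq t$ on $\partial\Omega$, each $\omega_t$ is compactly contained in $\Omega$ and (for a.e.\ regular value $t$, by Sard's theorem) has the smooth level curve $\{p=t\}$ as its boundary; moreover $A$ is non-increasing, $A(M)\leq\Sigma<8\pi$, and $A(t)\to0$ as $t\uparrow p_{\max}$.

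The first ingredient is obtained by testing the hypothesis against $\omega_t$: integrating $-\Delta\log p\leq p$ over $\omega_t$ and applying the divergence theorem, while noting that the outward normal derivative of $\log p$ on $\{p=t\}$ equals $-|\nabla p|/t$, gives
\begin{equation*}
\frac1t\int_{\{p=t\}}|\nabla p|\,ds\ \leq\ \int_{\omega_t}p\,dx=A(t).
\end{equation*}
The second ingredient is the co-area formula, which yields $-A'(t)=t\int_{\{p=t\}}|\nabla p|^{-1}\,ds$ for a.e.\ $t$. I would then combine these two facts through the Cauchy--Schwarz inequality on the level curve:
\begin{equation*}
\Big(\int_{\{p=t\}}ds\Big)^{2}\leq\Big(\int_{\{p=t\}}\frac{ds}{|\nabla p|}\Big)\Big(\int_{\{p=t\}}|\nabla p|\,ds\Big)\leq\Big(\frac{-A'(t)}{t}\Big)\big(t\,A(t)\big)=-A'(t)\,A(t).
\end{equation*}

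At this point I would invoke the Alexandrov--Bol inequality in the form $L(t)^{2}\geq\tfrac12\,A(t)\big(8\pi-A(t)\big)$, valid precisely under $-\Delta\log p\leq p$, where $L(t)=\int_{\{p=t\}}\sqrt p\,ds=\sqrt t\int_{\{p=t\}}ds$ is the length of the level curve in the conformal metric $p\,|dx|^{2}$. Writing $L(t)^{2}=t\big(\int_{\{p=t\}}ds\big)^{2}\leq-t\,A'(t)\,A(t)$ and cancelling $A(t)>0$ produces the differential inequality
\begin{equation*}
\frac{-A'(t)}{8\pi-A(t)}\ \geq\ \frac{1}{2t},\qquad t\in(M,p_{\max}).
\end{equation*}
Integrating from $M$ towards $p_{\max}$ and using $A(t)\to0$ gives $\log\tfrac{8\pi}{8\pi-A(M)}\geq\tfrac12\log\tfrac{p_{\max}}{M}$; since $A(M)\leq\Sigma$, exponentiating yields exactly $\max_{\overline\Omega}p\leq(1-\Sigma/8\pi)^{-2}\max_{\partial\Omega}p$.

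The genuine difficulty is the Alexandrov--Bol inequality itself, i.e.\ the curvature-weighted isoperimetric comparison between the conformal length $L(t)$ and the area $A(t)$: this is the geometric heart of the argument and the reason the statement is quoted from the literature (Bandle, Suzuki) rather than reproved here. The remaining steps are largely bookkeeping, but they still require the usual care: restricting to regular values of $p$ via Sard's theorem, handling critical points and possibly multiply-connected level sets (so that the co-area computation and the divergence-theorem identity remain valid on the whole super-level set), and controlling $A(t)$ as $t\uparrow p_{\max}$ so that the boundary term in the final integration genuinely vanishes.
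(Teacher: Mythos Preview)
Your sketch is correct and is exactly the standard level-set/co-area argument due to Suzuki, with the Alexandrov--Bol isoperimetric inequality of Bandle supplying the geometric core. Note, however, that the paper does not actually prove this theorem: it is simply quoted from the literature (as Proposition~4 of \cite{s}, relying on \cite{ban}) and closed with a \qed, so there is no ``paper's own proof'' to compare against. What you have written is essentially the argument one finds in the cited reference, and you have correctly identified the Alexandrov--Bol inequality as the substantive step that is being imported rather than reproved.
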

\qed

Clearly, Theorem \ref{ab} applies for the radial function $p=e^{v_{2,n}}$ in $\Omega=B_{r_0},$ and for every $0< r < r_0,$ it implies that:

\begin{equation}\label{6122b}
1 = e^{v_{2,n} (0)} \leq \max_{B_r} e^{v_{2,n}} \leq \left( 1 - \frac{\int_{0}^{r} t e^{v_{2,n}}}{4} \right)^{-2} e^{v_{2,n} (r)} \leq 4 e^{v_{2,n} (r)}.
\end{equation}

In other words, by combining \eqref{6122a} and \eqref{6122b} with \eqref{6120} and \eqref{6121}, we conclude that,

\begin{equation}\label{6123}
\forall \, R>0 \quad \exists \, C_R>0 : \quad \| v_{2,n} \|_{L^\infty (B_R)} \leq C_R.
\end{equation}

As a consequence of \eqref{6123}, we are in position to apply Theorem \ref{teobt} to the sequence $v_{1,n},$ and along a subsequence, we conclude that:  $r^{2N} e^{v_{1,n}} \to 8 \pi (N+1) \delta_0$, weakly in the sense of measure on compact subsets of $\mathbb{R}^2$.

Furthermore, since $e^{v_{1,n}}$ is uniformly bounded away from the origin (see \eqref{6120}), by well known elliptic estimates, we also deduce that,

$$\forall \, \varepsilon >0 \quad \exists \, C_\varepsilon > 0 : \quad \| v_{2,n} \|_{C^{2,\alpha} (B_{1/\varepsilon} \setminus B_{\varepsilon}}) \leq C_\varepsilon.$$

Thus along a subsequence, $v_{2,n} \to v_2$ uniformly in $C^{2}_{loc} (\mathbb{R}^2 \setminus \{ 0 \})$, with  $v_2$ satisfying, in the sense of distributions, the following problem:

\begin{equation}
\begin{cases}
-\Delta v_2 = e^{v_2} - 8 \pi (N+1) \tau \delta_0, \quad \text{ in } \mathbb{R}^2\\
\| v_{2} \|_{L^\infty (B_1)} \leq C,
\end{cases}
\end{equation}

and this is clearly impossible. Therefore, by taking into account also \eqref{6120}, we may conclude that:  $v_{2,n} (R_{2,n}) \to +\infty  \;\text{ and } R_{2,n} \to 0,  \text{ as } n \to +\infty,$ as claimed. 

\qed

\begin{remark}\label{rmk61} 
A crucial ingredient in the proof of Lemma \ref{lem4*}, is given by the information that ${v_{2,n}}(0)=0$.  In fact,  we can check easily that  the same conclusion would hold under the more general assumption: $v_{2,n}(s_{n}) = C$ with 
$s_{n} \to 0,  \text{ as } n \to +\infty.$  Clearly,  the role between $v_{1,n}$ and $v_{2,n}$ can be interchanged.
\end{remark}

Next, for $R_{i,n} \geq 0$ defined in \eqref{6121a}, we let:

\begin{equation}\label{6124}
\varepsilon_{1,n} = e^{- \frac{v_{1,n} (R_{1,n})}{2(N+1)}} \quad , \quad \varepsilon_{2,n} = e^{- \frac{v_{2,n} (R_{2,n})}{2}}
\end{equation}

so that, from \eqref{6121b} and lemma \ref{lem4*} we know that,

\begin{equation}\label{6125}
R_{i,n} \to 0 \; \text{ and } \; \varepsilon_{i,n} \to 0, \quad \text{ as } n \to \infty; \quad i=1,2.
\end{equation}

We define:

$$v_{1,n}^{*} (r) = v_{1,n} ( \varepsilon_{2,n} r) + 2 (N+1) \log \varepsilon_{2,n},$$

and 

$$v_{2,n}^{*} (r) = v_{2,n} ( \varepsilon_{2,n} r) + 2 \log \varepsilon_{2,n}.$$

We know that, $(v_{1,n}^{*},v_{2,n}^{*})$ satisfies \eqref{6112} and,

\begin{equation}\label{6126}
\begin{split}
&\int_{0}^{+\infty} r^{2N+1} e^{v_{1,n}^{*} (r)} \, dr = \int_{0}^{+\infty} r^{2N+1} e^{v_{1,n} (r)}dr = \beta_{1,n} \to \beta_1 \quad \text{ as } n \to \infty, \\
&\int_{0}^{+\infty} r e^{v_{2,n}^{*} (r)} \, dr = \int_{0}^{+\infty} r e^{v_{2,n} (r)} \, dr = \beta_{2,n} \to \beta_2 \quad \text{ as } n \to \infty.
\end{split}
\end{equation}

Furthermore, we notice that:

\begin{eqnarray}
 \label{6127}& \max_{r \geq 0} [v_{1,n}^{*}(r)] = v_{1,n}^{*} \left( \frac{R_{1,n}}{\varepsilon_{2,n}} \right) = 2 (N+1) \log \left( \frac{\varepsilon_{2,n}}{\varepsilon_{1,n}} \right), \\
\label{6128} &\max_{r \geq 0}[ v_{2,n}^{*}(r)] = v_{2,n}^{*} \left( \frac{R_{2,n}}{\varepsilon_{2,n}} \right) = 0. 
\end{eqnarray}

We have:

\begin{lemma}\label{lem2}
Along a subsequence, the following holds as $n \to \infty:$

\begin{equation}\label{6129}
\frac{R_{1,n}}{R_{2,n}} \to 0 \; , \quad \frac{\varepsilon_{1,n}}{\varepsilon_{2,n}} \to 0 \; , \quad \frac{R_{1,n}}{\varepsilon_{1,n}} \to 0 \; \text{ and } \quad \frac{R_{2,n}}{\varepsilon_{2,n}} \to l
\end{equation}
with suitable $l>0$. In particular $v_{1,n}^{*}$ admits a blow up point at the origin while $v_{2,n}^{*}$ is uniformly bounded in $C^{2,\alpha}_{loc} (\mathbb{R}^2 \setminus \{ 0 \})$.
\end{lemma}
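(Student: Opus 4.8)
The plan is to analyse the two blow-up scales $\varepsilon_{1,n},\varepsilon_{2,n}$ and the two peak locations $R_{1,n},R_{2,n}$ separately, organising everything around the rescaling at the \emph{coarser} scale $\varepsilon_{2,n}$. First I would record the a priori information that is free from the Pohozaev-type identity \eqref{313}: evaluating it at $r=R_{i,n}$ and using $e^{v_{1,n}(R_{1,n})}=\varepsilon_{1,n}^{-2(N+1)}$, $e^{v_{2,n}(R_{2,n})}=\varepsilon_{2,n}^{-2}$ gives
\begin{equation*}
\Big(\tfrac{R_{1,n}}{\varepsilon_{1,n}}\Big)^{2(N+1)}\le C,\qquad \Big(\tfrac{R_{2,n}}{\varepsilon_{2,n}}\Big)^{2}\le C ,
\end{equation*}
so that, along a subsequence, $R_{1,n}/\varepsilon_{1,n}\to l_1\ge0$ and $R_{2,n}/\varepsilon_{2,n}\to l\ge0$ are both finite. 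It then remains to prove $\varepsilon_{1,n}/\varepsilon_{2,n}\to0$, $l_1=0$ and $l>0$; the last relation $R_{1,n}/R_{2,n}\to0$ follows from $R_{1,n}/R_{2,n}=(R_{1,n}/\varepsilon_{1,n})(\varepsilon_{1,n}/\varepsilon_{2,n})(\varepsilon_{2,n}/R_{2,n})$ together with the boundedness of $R_{1,n}/\varepsilon_{1,n}$.

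\textbf{The main obstacle} is the separation of scales $\varepsilon_{1,n}/\varepsilon_{2,n}\to0$, and I would settle it by a soft ODE argument which is the heart of the proof and uses crucially that $v_{2,n}(0)=0$ (cf. Remark \ref{rmk61}). Arguing by contradiction, suppose $\varepsilon_{1,n}/\varepsilon_{2,n}\not\to0$; passing to a subsequence we may assume $\varepsilon_{1,n}/\varepsilon_{2,n}\ge\delta>0$, whence by \eqref{6127} the quantity $\max_{r\ge0}v_{1,n}^{*}=2(N+1)\log(\varepsilon_{2,n}/\varepsilon_{1,n})$ stays bounded above. Consequently, on the bounded interval $[0,R_{2,n}/\varepsilon_{2,n}]$ the radial forcing $g_n(r):=e^{v_{2,n}^{*}(r)}-\tau r^{2N}e^{v_{1,n}^{*}(r)}$ satisfies $|g_n|\le C$, since $e^{v_{2,n}^{*}}\le1$ by \eqref{6128} and $r^{2N}e^{v_{1,n}^{*}}$ is controlled by the bounds on $R_{2,n}/\varepsilon_{2,n}$ and on $\max v_{1,n}^{*}$. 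Integrating $-(r(v_{2,n}^{*})')'=r g_n$ twice from the origin and using $(v_{2,n}^{*})'(0)=0$ yields $|v_{2,n}^{*}(r)-v_{2,n}^{*}(0)|\le C r^{2}/4$ on that interval. Evaluating at the maximum point, and recalling $v_{2,n}^{*}(R_{2,n}/\varepsilon_{2,n})=0$ while $v_{2,n}^{*}(0)=2\log\varepsilon_{2,n}\to-\infty$ (here $v_{2,n}(0)=0$ enters), forces $-2\log\varepsilon_{2,n}\le C$, contradicting $\varepsilon_{2,n}\to0$. Hence $\varepsilon_{1,n}/\varepsilon_{2,n}\to0$; in particular $v_{1,n}^{*}$ blows up at the origin.

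With the scale separation established, I would pass to the limit in the $\varepsilon_{2,n}$-rescaling. By \eqref{6120} both $v_{1,n}^{*}$ and $v_{2,n}^{*}$ are bounded above away from the origin, while $\int_0^{\infty} r e^{v_{2,n}^{*}}=\beta_{2,n}\to\beta_2>0$ prevents $v_{2,n}^{*}$ from degenerating; combining this with the Harnack inequality \eqref{6121} and interior elliptic estimates gives $v_{2,n}^{*}\to V_2$ in $C^{2,\alpha}_{loc}(\mathbb{R}^2\setminus\{0\})$, which already yields the uniform bound asserted in the statement. To identify $V_2$, note that $v_{1,n}^{*}$ has the origin as its only blow-up point (again by \eqref{6120}) and $e^{v_{2,n}^{*}}\le1$, so Theorem \ref{teobt} applies to $v_{1,n}^{*}$ and gives $r^{2N}e^{v_{1,n}^{*}}\rightharpoonup 8\pi(N+1)\delta_0$, the residual non-concentrating mass $\beta_1-4(N+1)$ of $v_1$ escaping to spatial infinity in this rescaling. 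Thus $V_2$ solves $-\Delta V_2=e^{V_2}-8\pi(N+1)\tau\,\delta_0$ on $\mathbb{R}^2$, so $e^{V_2}\sim|x|^{4\tau(N+1)}$ near the origin, whence $V_2\to-\infty$ and $V_2$ is strictly increasing there and attains its maximum at a radius $l>0$. Since $v_{2,n}^{*}$ attains its maximum value $0$ at $R_{2,n}/\varepsilon_{2,n}$, which converges to the maximum point of $V_2$, we obtain $R_{2,n}/\varepsilon_{2,n}\to l\in(0,\infty)$, the finiteness coming from \eqref{313}.

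Finally, to prove $R_{1,n}/\varepsilon_{1,n}\to0$ I would rescale $v_{1,n}$ at its own scale, setting $w_{1,n}(s)=v_{1,n}(\varepsilon_{1,n}s)+2(N+1)\log\varepsilon_{1,n}$, so that
\begin{equation*}
-\Delta w_{1,n}=s^{2N}e^{w_{1,n}}-\tau\,\varepsilon_{1,n}^{2}\,e^{v_{2,n}(\varepsilon_{1,n}s)} .
\end{equation*}
The coupling term is now negligible: bounding $v_{2,n}(\varepsilon_{1,n}s)$ by its global maximum $v_{2,n}(R_{2,n})$ gives $\varepsilon_{1,n}^{2}e^{v_{2,n}(\varepsilon_{1,n}s)}\le(\varepsilon_{1,n}/\varepsilon_{2,n})^{2}\to0$ by the previous step. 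Hence $w_{1,n}$ (bounded above with maximum $0$, bounded below on compacta by \eqref{6121}) converges to the radial finite-mass entire solution of $-\Delta W=s^{2N}e^{W}$, which by the classification \eqref{128} (with $b=0$) attains its maximum at $s=0$; since $w_{1,n}$ attains its maximum at $R_{1,n}/\varepsilon_{1,n}\to l_1$, we conclude $l_1=0$. Collecting the four limits, and deducing $R_{1,n}/R_{2,n}\to0$ from the factorisation noted above, completes the proof. The only genuinely delicate point is the scale-separation step of the second paragraph; every other step is a by-now-standard combination of the blow-up theorem, Harnack's inequality and Liouville classification.
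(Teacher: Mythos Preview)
Your scale--separation argument (second paragraph) is correct and is in fact a clean alternative to the paper's. The paper integrates the equation for $v_{2,n}$ between $0$ and $R_{2,n}$ and reads off $R_{2,n}/\varepsilon_{1,n}\to\infty$ directly (see \eqref{6130}--\eqref{6132}), while you obtain $\varepsilon_{1,n}/\varepsilon_{2,n}\to 0$ by a contradiction based on a double integration of the $v_{2,n}^{*}$--equation together with $v_{2,n}^{*}(0)=2\log\varepsilon_{2,n}\to-\infty$. Both approaches exploit $v_{2,n}(0)=0$ in the same essential way. Your treatment of $R_{1,n}/\varepsilon_{1,n}\to 0$ via the $\varepsilon_{1,n}$--rescaling is also the paper's argument.

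There is, however, a genuine gap in your third paragraph, precisely at the point you need $R_{2,n}/\varepsilon_{2,n}\to l>0$ (equivalently, local compactness of $v_{2,n}^{*}$ away from the origin). The assertion that ``$\int_0^{\infty} r e^{v_{2,n}^{*}}=\beta_{2,n}\to\beta_2>0$ prevents $v_{2,n}^{*}$ from degenerating'' is not sufficient: since $v_{2,n}^{*}\le 0$, no mass can concentrate at the origin, but nothing in your argument rules out that all the mass of $e^{v_{2,n}^{*}}$ escapes to infinity while $v_{2,n}^{*}\to-\infty$ uniformly on compacta of $(0,\infty)$. In that scenario the Harnack inequality \eqref{6121} gives no lower bound (it only says that if the infimum on an annulus tends to $-\infty$ then so does the supremum, which is perfectly compatible with $\max v_{2,n}^{*}=0$ being attained at $R_{2,n}/\varepsilon_{2,n}\to 0$). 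Consequently you cannot pass to a limit $V_2$, and even if you could, the assertion that ``$R_{2,n}/\varepsilon_{2,n}$ converges to the maximum point of $V_2$'' is unjustified when that ratio collapses to $0$, since the $C^{2}_{loc}(\mathbb{R}^{2}\setminus\{0\})$ convergence says nothing there. Your argument is circular on this point: you use the structure of $V_2$ to deduce $l>0$, but you need $l>0$ to obtain $V_2$ in the first place.

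The paper closes this gap \emph{before} passing to any limit, by invoking Remark~\ref{rmk61} (the mechanism of Lemma~\ref{lem4*}): if $R_{2,n}/\varepsilon_{2,n}\to 0$ then $v_{2,n}^{*}(s_n)=0$ with $s_n\to 0$ while $v_{1,n}^{*}$ blows up at the origin; re--running the Alexandrov--Bol argument of Lemma~\ref{lem4*} (with the roles of the two components suitably read) would force $v_{2,n}^{*}$ to blow up as well, contradicting $\max v_{2,n}^{*}=0$. Once $l>0$ is secured, Harnack at the fixed point $l$ yields the $C^{2,\alpha}_{loc}$ bounds you claim, and the rest of your outline goes through.
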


\proof From Lemma \ref{lem4*} we derive in particular that $R_{2,n}> 0,$ and therefore we can use  the second equation in \eqref{6112} to find,

\begin{equation}\label{6130}
\begin{split}
0 &\leq v_{2,n} (R_{2,n}) - v_{2,n} (0) = - \int_{0}^{R_{2,n}} \frac{1}{r} \int_{0}^{r} (t e^{v_{2,n} (t)} - \tau t^{2N+1} e^{v_{1,n} (t)}) \, dt\\
&\leq \tau \int_{0}^{R_{2,n}} \frac{1}{r} \int_{0}^{r}  t^{2N+1} e^{v_{1,n} (t)} \leq \frac{\tau R_{2,n}^{2(N+1)}}{(2(N+1))^2} e^{v_{1,n} (R_{1,n})}
\end{split}
\end{equation}

Thus, if we recall that: $v_{2,n} (0) = 0$, $R_{1,n}^{2(N+1)} e^{v_{1,n} (R_{1,n})} \leq C$ (see \eqref{6120}) and $v_{2,n}(R_{2,n}) \to +\infty$ as $n \to +\infty$, from \eqref {6130} we conclude that, 

\begin{equation}\label{6131}
\frac{R_{1,n}^{2(N+1)}}{R_{2,n}^{2(N+1)}} \to 0, \quad \text{ as } n \to +\infty;
\end{equation}

\begin{equation}\label{6132}
\left( \frac{R_{2,n}}{\varepsilon_{1,n}}\right)^{2(N+1)} = R_{2,n}^{2(N+1)} e^{v_{1,n} (R_{1,n})} \to +\infty, \quad \text{ as } n \to +\infty.
\end{equation}

Furthermore, by  \eqref{6132} and the fact that $R_{2,n}^{2} e^{v_{2,n} (R_{1,n})} \leq C$ (see \eqref{6120}), we also deduce that,

$$
\left( \frac{\varepsilon_{2,n}}{\varepsilon_{1,n}} \right)^{2} = \left( \frac{\varepsilon_{2,n}}{R_{2,n}}\right)^{2} \left( \frac{R_{2,n}}{\varepsilon_{1,n}}\right)^{2} = 
\left( \frac{R_{2,n}}{\varepsilon_{1,n}}\right)^{2} \frac{1}{(R_{2,n})^2 e^{v_{2,n} (R_{2,n})}} \geq \frac{1}{C} \left( \frac{R_{2,n}}{\varepsilon_{1,n}}\right)^{2} \to +\infty, \quad \text{ as } n \to +\infty.
$$

In other words, as $n \to +\infty$ there holds:

$$\frac{\varepsilon_{2,n}}{\varepsilon_{1,n}} \to +\infty,$$ 

and so, 

$$v_{1,n}^{*} \left( \frac{R_{1,n}}{\varepsilon_{2,n}} \right) = 2 (N+1) \log \left( \frac{\varepsilon_{2,n}}{\varepsilon_{1,n}} \right) \to +\infty,$$

$$\frac{R_{1,n}}{\varepsilon_{2,n}}  \leq \; \frac{R_{1,n}}{R_{2,n}} R_{2,n} e^{v_{2,n}(R_{2,n})/2} \leq C \; \frac{R_{1,n}}{R_{2,n}} \to 0.$$

Thus, we have shown that,  $v_{1,n}^{*}$ admits a blow up point at the origin, and by the estimates \eqref {6120} \eqref {6121}, which continue to hold for the pair $(v_{1,n}^{*}, v_{2,n}^{*})$, we see that actually the origin is the only blow-up point for 
$v_{1,n}^{*}.$ \\
On the other hand, from \eqref{6128} we see that  $v_{2,n}^{*}$ is uniformly bounded from above, and $v_{2,n}^{*}(\frac{R_{2,n}}{\varepsilon_{2,n}})=0.$ At this point, by recalling that:  $\frac{R_{2,n}}{\varepsilon_{2,n}} < C$  (see \eqref {6120}), we can use 
Remark \ref{rmk61} in order to conclude that $\frac{R_{2,n}}{\varepsilon_{2,n}}$ must be also bounded from below away from zero. So, along a subsequence, there holds:

\begin{equation}\label{444}
\frac{R_{2,n}}{\varepsilon_{2,n}} \to l > 0, \text{ as } n \to +\infty.
\end{equation}
 
Consequently, we can use \eqref{444} together with the Harnack-type inequality \eqref {6121}, in order to conclude that, for all $\varepsilon >0$ sufficiently small,  $|v_{2,n}^{*}|$ is uniformly bounded in $ [\varepsilon, \frac{1}{\varepsilon}]$.
Since also $v_{1,n}^{*}$ is uniformly bounded from above in $ [\varepsilon, \frac{1}{\varepsilon}]$,  we can use standard elliptic estimates  to obtain that $v_{2,n}^{*}$ is actually uniformly bounded in $C^{2,\alpha}_{loc} (0,+\infty)$ as claimed.

Finally, it remains to show that (along a subsequence) $\frac{R_{1,n}}{\varepsilon_{1,n}}$  tends to zero. To this purpose, we recall first that, by \eqref {6120}, we have:  $\frac{R_{1,n}}{\varepsilon_{1,n}} < C.$ 
Therefore, if we define:

$$\xi_{1,n} (r) = v_{1,n}^{*} \left( \frac{\varepsilon_{1,n}}{\varepsilon_{2,n}} r \right) + 2 (N+1) \log \left( \frac{\varepsilon_{1,n}}{\varepsilon_{2,n}} r \right) = v_{1,n} ( \varepsilon_{1,n} r) + 2 (N+1) \log \varepsilon_{1,n},$$
$$\xi_{2,n} (r) = v_{2,n}^{*} \left( \frac{\varepsilon_{1,n}}{\varepsilon_{2,n}} r \right) + 2 \log \left( \frac{\varepsilon_{1,n}}{\varepsilon_{2,n}} r \right) = v_{2,n} ( \varepsilon_{1,n} r) + 2 \log \varepsilon_{1,n},$$

we see that $(\xi_{1,n} (r), \xi_{2,n} (r))$ satisfies \eqref{6112} together with the estimates \eqref {6120} \eqref {6121}.\\
 Furthermore,   

$$e^{\xi_{2,n} (r)} = \left( \frac{\varepsilon_{1,n}}{\varepsilon_{2,n}} \right)^2 e^{v_{2,n}^{*} \left( \frac{\varepsilon_{1,n}}{\varepsilon_{2,n}} r \right)} \to 0 \text{ uniformly in } [0,+\infty),$$

while, $\xi_{1,n}$ satisfies:

\begin{equation*}
\begin{cases}
-\Delta \xi_{1,n} = r^{2N} e^{\xi_{1,n}} - \tau e^{\xi_{2,n}}, \quad r>0\\
\max_{r \geq 0} \xi_{1,n}=\xi_{1,n} \left( \frac{R_{1,n}}{\varepsilon_{1,n}} \right) = 0 \\
\int_{0}^{+\infty} r^{2N+1} e^{\xi_{1,n} (r)} \, dr \leq C,
\end{cases}
\end{equation*}

with  a suitable  constant $C>0$.  Therefore, $\xi_{1,n}$ is locally uniformly bounded (see \eqref{6121}), and by standard elliptic estimates and a diagonalization process, we obtain (along a subsequence): 

\begin{equation}\label{6133}
\xi_{1,n} \to \xi \; \text{ in } C^{2}_{loc},\quad \text{ and }\quad \frac{R_{1,n}}{\varepsilon_{1,n}}\to R_0, \quad n \to +\infty,
\end{equation}

with $\xi = \xi (r)$ the \underline{unique} radial solution of the following  "singular" Liouville equation:

\begin{equation*}
\begin{cases}
-\Delta \xi = r^{2N} e^{\xi}\\
 \max_{r \geq 0} \xi_{1}=\xi_{1}(R_{0}) = 0\\
\int_{0}^{+\infty} r^{2N+1} e^{\xi (r)} \, dr \leq C.
\end{cases}
\end{equation*}

From the classification result in \cite{pt}, we know that, 

\begin{equation}\label{6134}
\xi (r) = \log \left( \frac{1}{1+\frac{r^{2(N+1)}}{8(N+1)^2}} \right)^2 \; \text{ and } \; \int_{0}^{+\infty} r^{2N+1} e^{\xi (r)} \, dr = 4 (N+1)
\end{equation}

and we can use \eqref{6133} and \eqref{6134}, in order to we derive in particular that $R_0 = 0$, that is:

$$
\frac{R_{1,n}}{\varepsilon_{1,n}} \to 0,
$$

as claimed. 

\qed

\textbf{The proof of Theorem \ref{teoB}}

Since $v_{2,n}^{*} (r) \leq C,$ we can use Lemma \ref{lem2}, together with Theorem \ref{teobt} in order to conclude that, along a subsequence, the following holds : 

\begin {equation}\label{6137a}
r^{2N} e^{v_{1,n}^{*}} \to 8 \pi (N+1) \delta_0 \quad \text { weakly in the sense of measures on compact sets of } \mathbb{R}^{2}, 
\end{equation}
and 
\begin{equation}\label{6137b}
v_{2,n}^{*} \to v_{2}^{*} \, \text{ in } C^{2}_{loc} (\mathbb{R}^{2} \setminus \{ 0 \}),
\end{equation}

as $n \to \infty$, and $v_{2}^{*} $ defines a (radial) solution (in the sense of distributions) of  the following problem:

\begin{equation}\label{6137}
\begin{cases}
-\Delta v_{2}^{*} = e^{v_{2}^{*}} - 8 \pi (N+1)\tau \delta_0, \text{ in }\mathbb{R}^{2}\\
\int_{\mathbb{R}^{2}} e^{v_{2}^{*}} dx \ < \infty.
\end{cases}
\end{equation}

By the fact that  $\max_{r \geq 0}  \; v_{2}^{*}(r) = 0,$ we can write down the explicit expression for $v_{2}^{*} = v_{2}^{*} (r)$   in terms of the limit value $l$ in \eqref{6129},  In particular we know that,

$$
\int_{0}^{+\infty}  e^{v_{2}^{*} (r)} \, r dr = \beta_{2}^{*} (\tau) = 4  + 8 \tau(N+1):=\beta_{2}^*(\tau),
$$

see \cite{pt} or Theorem 2.2.1 in \cite{tar3}).

Consequently, from Fatou's lemma we deduce that,

\begin{equation}\label{6138}
\lim_{n \to \infty} \int_{0}^{+\infty}  e^{v_{2,n}^{*} (r)} \, rdr = \beta_{2} \geq \beta_{2}^{*} (\tau).
\end{equation}

While, by means of Lebesgue dominated convergence, we have: 
$$\forall \, \varepsilon > 0 \; \exists \; R_{\varepsilon} > 0 \text{ and } \; n_{\varepsilon} \in \mathbb{N}:$$

\begin{equation}\label{6139}
\int_{0}^{R} r e^{v_{2,n}^{*} (r)} \, dr \geq \beta_{2}^{*} (\tau) - \varepsilon, \quad \forall R \geq R_{\varepsilon} \text{ and } \; \forall n \geq n_{\varepsilon}.
\end{equation}

\begin{equation}\label{6140}
\textbf{ Claim 1: } \text{ if } \tau \in (0,\tau_{0}^{(1)}] \text{ then the pair } (\beta_1, \beta_2) \text{ satisfies: } \; \beta_1 = 4(N+1) \text{ and } \beta_2 = \beta_{2}^{*}(\tau).
\end{equation}

Indeed for $\tau \in (0,\tau_{0}^{(1)}],$  we can use  \eqref{coro5282} to obtain that,

\begin{equation}\label{6141}
\varphi_{2}^{+} (\beta_{2}^{*}) = 4 (N+1) \leq \beta_1 = \varphi_{2}^{+} (\beta_{2}) \leq \varphi_{2}^{+} (\beta_{2}^{*}),
\end{equation}

where, the last inequality follows by  \eqref{6138} and the monotonicity of $\varphi_{2}^{+}$. Clearly,  \eqref{6140} easily follows by \eqref{6141}.

\begin{equation}\label{6142}
\textbf{ Claim 2: } \text{ if } \tau \in (\tau_{0}^{(1)},\tau_{1}^{(1)}) \text{ then } \beta_1 = \beta_{1}^{**} \text{ and } \beta_{2} = \beta_{2}^{*}
\end{equation}

To establish \eqref{6142}, we notice that from \eqref{corocoro110} in this case there holds:

\begin{equation}\label{6143}
\beta_{1}^{**} (\tau) = \varphi_{2}^{+} (\beta_{2}^{*} (\tau)) \geq \varphi_{2}^{+} (\beta_{2}) = \beta_1. 
\end{equation}

While, from part $(i)$ of Proposition \ref{prop41} we have:

\begin{equation}\label{6144}
\beta_2 - \tau \beta_1 \geq \beta_{2}^{*} (\tau) - \tau \beta_{1}^{**} (\tau) > 2.
\end{equation}

We fix $\varepsilon_0 > 0$ sufficiently small, so that: 

\begin{equation}\label{6145}
\beta_{2}^{*} (\tau) - \tau \beta_{1}^{**} (\tau) > 2 (1 + \varepsilon_0),
\end{equation}

and then, we let $n_0 \in \mathbb{N}$ and $R_0 > 1$ sufficiently large, so that  $\forall \, n \geq n_0$ there holds,

\begin{equation}\label{6146}
\int_{0}^{R_0} r e^{v_{2,n}^{*} (r)} \, dr \geq \beta_{2}^{*} - \varepsilon_0,
\end{equation}

(see \eqref{6139}), and

\begin{equation}\label{6147}
\int_{0}^{\infty} r^{2N+1} e^{v_{1,n}^{*} (r)} \, dr < \beta_{1}^{**} + \varepsilon_0,
\end{equation}

(see \eqref{6143}).

As a consequence, for every $r > R_0$ and $n \geq n_0$ we can estimate:

\begin{equation}\label{6147bis}
\begin{split}
&v_{2,n}^{*} (r) - v_{2,n}^{*} (R_{0}) = - \int_{R_0}^{r} \frac{1}{t} \left( \int_{0}^{t} (s e^{v_{2,n}^{*} (s)} - \tau s^{2N+1} e^{v_{1,n}^{*} (s)}) \, ds \right)\, dt\\
&= - (\log r) \int_{0}^{r} (s e^{v_{2,n}^{*} (s)} - \tau s^{2N+1} e^{v_{1,n}^{*} (s)}) \, ds + (\log R_0) \int_{0}^{R_0} (s e^{v_{2,n}^{*} (s)} - \tau s^{2N+1} e^{v_{1,n}^{*} (s)}) \, ds\\
&+ \int_{R_0}^{r} (\log s) (s e^{v_{2,n}^{*} (s)} - \tau s^{2N+1} e^{v_{1,n}^{*} (s)}) \, ds.
\end{split}
\end{equation}

Thus, from \eqref{6147bis},  we obtain:

\begin{equation}\label{6148}
v_{2,n}^{*} (r) - v_{2,n}^{*} (R_{0}) \leq - (\log\frac{ r}{R_0})( \int_{0}^{R_0} s e^{v_{2,n}^{*} (s)}ds - \tau \int_{0}^{r}s^{2N+1} e^{v_{1,n}^{*} (s)} \, ds).
\end{equation}

Therefore, by recalling that $v_{2,n}^{*} (R_{0}) \leq 0$, we can use \eqref{6146} and \eqref{6147} and \eqref{6148} to find a suitable constant $C_0 > 0$ such that ,

\begin{equation}\label{6149}
v_{2,n}^{*} (r) \leq - (\log r) (\beta_{2}^{*} (\tau) - \tau \beta_{1}^{**} (\tau) - (1+\tau) \varepsilon_0) + C_0, \quad\text{ for } n \geq n_\varepsilon,\quad r \geq R_0,
\end{equation}
where, form \eqref{6145} we know that,

\begin{equation}\label{6150}
(\beta_{2}^{*} (\tau) - \tau \beta_{1}^{**} (\tau) - (1+\tau) \varepsilon_0) > 2+(1-\tau)\varepsilon_0 > 2.  
\end{equation}

As a consequence of \eqref{6149} and \eqref{6150}, we see that, for every $\varepsilon >0$ we can find $R_\varepsilon > 0$ and $n_\varepsilon \in \mathbb{N}$ such that,

\begin{equation*}
\int_{R_\varepsilon}^{+\infty} r e^{v_{2,n}^{*} (r)} \, dr \leq \frac{\varepsilon}{2}, \quad \forall \, n \geq n_\varepsilon
\end{equation*}

By taking into account that,

\begin{equation}\label{6150bis}
\lim_{n \to +\infty} \int_{0}^{R_\varepsilon} r e^{v_{2,n}^{*} (r)} \, dr = \int_{0}^{R_\varepsilon} r e^{v_{2}^{*} (r)} \, dr \leq  \int_{0}^{+\infty} r e^{v_{2}^{*} (r)} \, dr = \beta_{2}^{*},
\end{equation}

and \eqref{6138}, we can take $n_\varepsilon $ larger if necessary, so that $\forall \, n \geq n_\varepsilon$ the following holds:

\begin{equation}\label{6150a}
\int_{0}^{R_\varepsilon} r e^{v_{2,n}^{*} (r)} \, dr \leq \; \beta_{2}^{*} + \frac{\varepsilon}{2}, \quad \text{ and } \quad \beta_{2}^{*} - \varepsilon < \int_{0}^{+\infty} r e^{v_{2,n}^{*} (r)} \, dr.
\end{equation}
In conclusion, for every $\varepsilon > 0,$ we have shown that,
\begin{equation}\label{6150b}
\beta_{2}^{*} - \varepsilon <\int_{0}^{+\infty} r e^{v_{2,n}^{*} (r)} \, dr  < \beta_{2}^{*} + \varepsilon, \quad \forall \, n \geq n_\varepsilon ;
\end{equation}

and so: $\beta_{2}^{*} = \beta_{2}$ and $\beta_1 = \varphi_{2}^{+} (\beta_{2}^{*}) = \beta_{1}^{**}$ as claimed. \\

\begin{equation}\label{6151}
\textbf{ Claim 3: } \text{ if } \tau \in [\tau_{1}^{(1)},1) \text{ then } \beta_2 = \underline{\beta}_{2} \text{ and } \beta_{1} = \overline{\beta}_{1}
\end{equation}

The case $\tau = \tau_{1}^{(1)}$ follows by a limiting argument form Claim 2. Indeed, as $\tau \nearrow \tau_{1}^{(1)}$, we have: $\beta_{2}^{*} (\tau) \to \beta_{2}^{*} (\tau_{1}^{(1)}) = \underline{\beta}_{2} (\tau_{1}^{(1)})$ and $\beta_{1}^{**} (\tau) \to \beta_{1}^{**} (\tau_{1}^{(1)}) = \overline{\beta}_{1} (\tau_{1}^{(1)})$. So it will suffice to prove \eqref{6142} when $\tau \in (\tau_{1}^{(1)},1).$

Since in this case: $\beta_{2}^{*} (\tau) - \tau \beta_{1}^{**} (\tau) < 2$, then the inequality \eqref{6138} must hold with the strict sign, that is:
\begin{equation}\label{6152} 
 \beta_2 > \beta_2^{*}.
 \end{equation}

To establish \eqref{6151}, we need to show that $\beta_{2,\tau} = \beta_2 - \tau \beta_1 = 2$. To this purpose, we recall that $\beta_2 - \tau \beta_1 \geq 2$, and so arguing by contradiction, we suppose that actually: $\beta_{2,\tau}=\beta_2 - \tau \beta_1 > 2.$

Let us define (via Kelvin transform):
$$
\hat{v}_{i,n}(r)=v^{*}_{i,n}\left(\dfrac{1}{r}\right)+\beta^{n}_{i,\tau}{\rm{log}}\left(\dfrac{1}{r}\right) \quad i=1, 2,
$$
which satisfies:
\begin{equation}\label{kelvin}
\begin{cases}
-\Delta\hat{v}_{1,n}=r^{\beta^{n}_{1,\tau}-2(N+2)}e^{\hat{v}_{1,n}}-\tau r^{\beta^{n}_{2,\tau}-4}e^{\hat{v}_{2,n}}\\
-\Delta\hat{v}_{2,n}=r^{\beta^{n}_{2,\tau}-4}e^{\hat{v}_{2,n}}-\tau r^{\beta^{n}_{1,\tau}-2(N+2)}e^{\hat{v}_{1,n}}\\
\end{cases}
\end{equation}
with
\begin {equation} \label{kalvin1}
\beta^{n}_{1,\tau} = \beta_{1,n} -\tau \beta_{2,n} \quad \text { and } \quad \beta^{n}_{2,\tau} = \beta_{2,n} -\tau \beta_{1,n},
\end {equation}
and 
\begin{equation}\label{kelvinbis}
\int^{\infty}_{0}s^{\beta^{n}_{1,\tau}-2(N+2)}e^{\hat{v}_{1,n}(s)}ds = \beta_{1, n} \quad \text{ and } \quad \int^{\infty}_{0}s^{\beta^{n}_{2,\tau}-4}e^{\hat{v}_{2,n}(s)}ds = \beta_{2,n}.
\end{equation}

  
We set,
$$
\beta^{\infty}_{1}:=\lim_{r\rightarrow0}(\liminf_{n\rightarrow\infty}\int^{r}_{0}s^{\beta^{n}_{1,\tau}-2(N+2)}e^{\hat{v}_{1,n}}ds),
$$
and
$$
\beta^{\infty}_{2}:=\lim_{r\rightarrow0}(\liminf_{n\rightarrow\infty}\int^{r}_{0}s^{\beta^{n}_{2,\tau}-4}e^{\hat{v}_{2,n}}ds).
$$
Recalling that, $\hat{v}_{2,n}$ is uniformly bounded in $ C^{2,\alpha}_{loc}(\mathbb{R}^2\setminus\left\{0\right\}),$ while for every compact set $\Omega\subset\subset \mathbb{R}^2\setminus\left\{0\right\}$ we have: $\displaystyle{\max_{\Omega}\hat{v}_{1,n}\rightarrow-\infty},$ we may deduce that, along a  subsequence, the following holds: 
$$\hat{v}_{2,n}\rightarrow\hat{v}_2 \quad \text{ uniformly in} \; C^{2}( \mathbb{R}^2\setminus\left\{0\right\}),$$
and 
$$
r^{\beta^{n}_{1,\tau}-2(N+2)}e^{\hat{v}_{1,n}}\rightharpoonup\beta^{\infty}_{1}\delta_{0} \quad \text{weakly in the sense of measures, locally in}\, \mathbb{R}^2.
$$
as $n\to \infty.$ 
Furthermore, as $\beta_{2,\tau} > 2$, we have also that,
$$
r^{\beta^{n}_{2,\tau}-4}e^{\hat{v}_{2,n}}\rightharpoonup\beta^{\infty}_{2}\delta_{0}+r^{\beta_{2,\tau}-4}e^{\hat{v}_2}  \quad \text{ as } n\to \infty,
$$
weakly in the sense of measure on compact set of $\mathbb{R}^2.$
In particular, $ \hat{v}_2$ defines a radial solution for the problem:
\begin{equation}\label{5*}
\begin{cases}
-\Delta\hat{v}_{2}=|x|^{\beta_{2,\tau} - 4}e^{\hat{v}_{2}}+(\beta^{\infty}_{2}-\tau\beta^{\infty}_{1})\delta_{0} \quad \text{ in } \mathbb{R}^2\\
\int_{\rr^{2}}|x|^{\beta_{2,\tau} - 4}e^{\hat{v}_{2}} < \infty.
\end{cases}
\end{equation}
As already mentioned, all solutions of \eqref{5*} are completely classified in \cite{pt} and in particular, in order to fulfill the integrability condition, we  must have:

\begin{equation}\label{cbe}
\beta_{2,\tau}-(\beta^{\infty}_{2}-\tau\beta^{\infty}_{1})>2.
\end{equation}

To proceed further, we fix $\varepsilon>0$ sufficiently small and  let $n_{\varepsilon}\in \nn$ and $R_{\varepsilon}>>1$ sufficiently large so that, for any $n\geq n_{\varepsilon}$ and $R\geq R_{\varepsilon}$ the following holds:
\begin{equation}\label{intg}
 \beta^{n}_{2,\tau} > \beta_{2,\tau} - \dfrac{\varepsilon}{4}, \quad \;  \int^{\infty}_{R}se^{v^{*}_{2,n}}ds\leq\beta^{\infty}_{2}+\dfrac{\varepsilon}{4}\quad\mbox{ and }\quad
\int^{\infty}_{R}s^{2N+1}e^{v^{*}_{1,n}}ds\geq\beta^{\infty}_{1}-\dfrac{\varepsilon}{4\tau}.
\end{equation}

Therefore, for $r>R_{\varepsilon}$ and $n\geq n_{\varepsilon}$ we obtain:
\begin{equation}
\begin{aligned}
v&^{*}_{2,n}(r)-v^{*}_{2,n}(R_{\varepsilon})=-\int^{r}_{R_{\varepsilon}}\dfrac{1}{t}(\int^{t}_{0}se^{v^{*}_{2,n}}ds)dt+\tau
\int^{r}_{R_{\varepsilon}}\dfrac{1}{t}(\int^{t}_{0}s^{2N+1}e^{v^{*}_{1,n}}ds)dt\\
&=\left[-{\rm{log}}\,t\int^{t}_{0}se^{v^{*}_{2,n}}ds\right]^{t=r}_{t=R_{\varepsilon}}+\int^{r}_{R_{\varepsilon}} t {\rm{log}}\,te^{v^{*}_{2,n}}dt+\tau\left[{\rm{log}}\,t \int^{t}_{0}s^{2N+1}e^{v^{*}_{1,n}}ds\right]^{t=r}_{t=R_{\varepsilon}}-\tau\int^{r}_{R_{\varepsilon}}t^{2N+1}{\rm{log}}\,t e^{v^{*}_{1,n}}dt\\
&\leq-{\rm{log}}\,r\left(\int^{r}_{0}se^{v^{*}_{2,n}}ds-\tau\int^{r}_{0}s^{2N+1}e^{v^{*}_{1,n}}ds\right)+\int^{r}_{R_\varepsilon}{\rm log}\,s\left(se^{v^{*}_{2,n}}-\tau s^{2N+1}e^{v^{*}_{1,n}}\right)ds+C_\varepsilon
\end{aligned}
\end{equation}
with $C_{\varepsilon}>0$ a suitable constant depending on $\varepsilon$ only.\\
Therefore, by recalling that $\beta^{n}_{2,\tau}=\int^{\infty}_{0}\left(se^{v^{*}_{2,n}}-\tau s^{2N+1}e^{v^{*}_{1,n}}\right)ds$ we find:
\begin{equation}
\begin{aligned}
v&^{*}_{2,n}(r)-v^{*}_{2,n}(R_{\varepsilon})+\beta^{n}_{2,\tau}{\rm log}\,r\\
&={\rm log}\,r\int^{\infty}_{r}se^{v^{*}_{2,n}}ds-\tau{\rm log}\,r\int^{\infty}_{r}s^{2N+1}e^{v^{*}_{1,n}}+\int^{r}_{R_{\varepsilon}}s{\rm log}\,s e^{v^{*}_{2,n}}-\tau\int^{r}_{R_{\varepsilon}}s^{2N+1}{\rm log}\,s e^{v^{*}_{1,n}}ds+C_{\varepsilon}\\
&\leq{\rm log}\,r\int^{\infty}_{r}se^{v^{*}_{2,n}}ds-\tau{\rm log}\,r\int^{\infty}_{r}s^{2N+1}e^{v^{*}_{1,n}}+{\rm log}\,r\int^{r}_{R_{\varepsilon}}s e^{v^{*}_{2,n}}-\tau\int^{r}_{R_{\varepsilon}}s^{2N+1}{\rm log}\,s e^{v^{*}_{1,n}}ds+C_{\varepsilon}\\
&={\rm log}\,r\int^{\infty}_{R_{\varepsilon}}se^{v^{*}_{2,n}}ds-\tau{\rm log}\,r\int^{\infty}_{r}s^{2N+1}e^{v^{*}_{1,n}}-\tau\int^{r}_{R_{\varepsilon}}s^{2N+1}{\rm log}\,s e^{v^{*}_{1,n}}ds+C_{\varepsilon}\\
&\leq{\rm log}\,r\left[\int^{\infty}_{R_{\varepsilon}}se^{v^{*}_{2,n}}ds-\tau\int^{\infty}_{r}s^{2N+1}e^{v^{*}_{1,n}}\right]+C_{\varepsilon}
\end{aligned}
\end{equation}
and by means of  \eqref{intg}, we derive:
$$
v^{*}_{2,n}(r)\leq-{\rm log}\,r\left[\beta_{2,\tau}-(\beta^{\infty}_{2}-\tau\beta^{\infty}_{1})-\varepsilon\right]+C_{\varepsilon}
$$
with $C_{\varepsilon}>0$ a suitable constant depending on $\varepsilon$ only.
At this point, we can use \eqref{cbe}, to obtain that, for $\varepsilon > 0$ sufficiently small, there holds: 
\begin{equation}\label{x}
v^{*}_{2,n}(r)\leq-({\rm log}\,r)\left(2+\varepsilon\right)+C_{\varepsilon}, \quad \text{ for } r\geq R_\varepsilon \quad \text{ and } \;  n\geq n_\varepsilon.
\end{equation}
But this is impossible, since as above it yields that: $\beta_2 = \beta_2^{*}$,  in contradiction to \eqref{6152}.


Thus we must have that necessarily: $\beta_2 - \tau \beta_1 = 2$, that is $\beta_2 = \underline{\beta}_2 (\tau)$ and $\beta_1 = \overline{\beta}_1 (\tau),$ and  also Claim 3 is established.

Since Claim 1, 2 and 3 hold along any sequence, clearly they imply that part $ (i)$ of Theorem \ref{teoB} holds.

In order to establish part $(ii)$ we argue similarly, only now  we take a sequence: 

\begin{equation}\label{6154}
\alpha_n \to -\infty
\end{equation}

and as above, we let $v_{i,n} (r) = v_i(r, \alpha_n)$, $i=1,2$ and suppose that,

$$\beta_{1,n} := \int_{0}^{+\infty} s^{2N+1} e^{v_{1,n} (s)} \, ds \to \beta_1 \text{ and } \beta_{2,n} := \int_{0}^{+\infty} s e^{v_{2,n} (s)} \, ds \to \beta_2$$

with $\beta_1$ and $\beta_2$ satisfying \eqref{6119a} and \eqref{6119b}.

In view of \eqref{6154}, it is convenient to let, 
$$t_n = e^{-\frac{\alpha_n}{2(N+1)}} \to + \infty, \quad \text{ as } n\to \infty, $$ 
and consider the following rescaled version of $(v_{1,n} , v_{2,n})$ :
\begin{equation}\label{6155}
\begin{split}
&\hat{v}_{1,n} (r) = v_{1,n} (t_n r) - v_{1,n} (0) = v_{1,n} (t_n r) - \alpha_n = v_{1,n} (t_n r) + 2 (N+1) \log t_n,\\
&\hat{v}_{2,n} (r) = v_{2,n} (t_n r) + 2 \log t_n;
\end{split}
\end{equation}

Thus, $(\hat{v}_{1,n}, \hat{v}_{2,n})$ is a solution of the system of ODE's in \eqref{6112} and it satisfies:

\begin{equation}\label{156}
\begin{split}
&\int_{0}^{+\infty} s^{2N+1} e^{\hat{v}_{1,n} (s)} \, ds  = \int_{0}^{+\infty} s^{2N+1} e^{v_{1,n} (s)} \, ds = \beta_{1,n} \to \beta_1\\
&\int_{0}^{+\infty} s e^{\hat{v}_{2,n} (s)} \, ds  = \int_{0}^{+\infty} s e^{v_{2,n} (s)} \, ds = \beta_{2,n} \to \beta_2\\
&\hat{v}_{1,n} (0) = 0 \text{ and } \hat{v}_{2,n} (0) = 2 \log t_{n} \to +\infty
\end{split}
\end{equation}

as $n \to \infty.$ Therefore, the arguments above can be applied to  $(\hat{v}_{1,n}, \hat{v}_{2,n})$, simply with the role of $v_{1,n}$ now played by $\hat{v}_{2,n}$ and that of $v_{2,n}$  played by $\hat{v}_{1,n}$.

Hence, with the obvious modifications, one can carry out the same blow-up analysis and  as above (along a subsequence) arrive at the following conclusion:

\begin{equation*}
\int_{0}^{+\infty} s^{2N+1} e^{\hat{v}_{1,n} (s)} \, ds  = \int_{0}^{+\infty} s^{2N+1} e^{v_{1,n} (s)} \, ds \to \beta_{1,-\infty} = \begin{cases}
\beta^{*}_{1}(\tau) \quad \text{ for } \tau \in (0, \tau^{(2)}_{1})\\
\underline{\beta}_{1}(\tau) \quad \text{ for } \tau \in [\tau^{(2)}_{1},1)
\end{cases}
\end{equation*}
while,
\begin{equation*}
\int_{0}^{+\infty} s e^{\hat{v}_{2,n} (s)} \, ds  = \int_{0}^{+\infty} s e^{v_{2,n} (s)} \, ds \to \varphi_{1}^{+} (\beta_1) = \beta_{2,-\infty} = \begin{cases}
4 \quad \text{ for } \tau \in (0, \tau^{(2)}_{0})\\
\beta^{**}_{2}(\tau) \quad \text{ for } \tau \in (\tau^{(2)}_{0}, \tau^{(2)}_{1})\\
\overline{\beta}_{2}(\tau) \quad \text{ for } \tau \in (\tau^{(2)}_{2},1)
\end{cases}
\end{equation*}

as $n \to +\infty$, and this completes the proof of Theorem \ref{teoB}.

\qed

Next, we proceed to show that actually the given condition \eqref{618} ( or equivalently \eqref{619}) on the pair $(\beta_1, \beta_2)$ is also \underline{necessary} for the solvability of \eqref{31}-\eqref{32bis}.
According to \eqref{613} and  \eqref{613*}, it is clear that we need to be concerned only with the case:  $\tau \,\in ( 0,  \tau_1^{(1)} ).$

To this purpose we establish the following:

\begin{theo}\label{teoC}
\begin{equation}\label{51}
\begin{aligned} 
&(i)\;  \text { If } \tau \in (0, \frac{1}{2}) \; \text{ then } \beta_i \leq \beta_i^{*} \quad i=1, 2, \; \text{ and } \; \beta_2 =\varphi_1^{+}(\beta_1) \geq \max \{ \beta^{**}_2,  4 \},\\
&\text { or  equivalently } \quad \beta_1 =\varphi_2^{+}(\beta_2) \geq \max \{ \beta^{**}_1,  4(N+1) \}.\\
&(ii)\;  \text { If }\quad  \tau \in ( \frac{1}{2}, \tau^{(2)}_1) \quad \text{ then } \quad \beta_2\leq \beta_2^{**}  \quad \text{ and }\, \beta_1\geq \beta_1^{*} > 4(N+1).\\
&(iii)\; \text { If } \quad \tau \in ( \frac{1}{2}, \tau^{(1)}_1) \quad \text{ then } \quad \beta_1\leq \beta_1^{**}  \quad \text{ and }\quad  \beta_2\geq \beta_2^{*} > 4.
\end{aligned}
\end{equation}
\end{theo}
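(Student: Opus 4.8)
The plan is to reduce Theorem \ref{teoC} to a one–sided bound on the one–parameter family of Remark \ref{rmk22}. By the scaling invariance \eqref{38}, every radial solution of \eqref{31}–\eqref{32bis} is, after normalisation, one of the solutions $(v_1(r,\alpha),v_2(r,\alpha))$ of the Cauchy problem \eqref{6112}, so the set of admissible fluxes is exactly the image of the continuous curve $\alpha\mapsto(\beta_1(\alpha),\beta_2(\alpha))$. This curve lies on the arc $\mathcal{E}$ and, by Lemma \ref{lemmadueall}, is pinned to the branch $\beta_2=\varphi_1^+(\beta_1)$; hence each of the equivalent inequalities grouped in \eqref{51} amounts to a \emph{single} one–sided bound on one coordinate, e.g. $\beta_2(\alpha)\le\beta_2^*(\tau)$ in case $(i)$. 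The companion inequalities then follow from the strict monotonicity of $\varphi_1^\pm,\varphi_2^\pm$ in Lemma \ref{lemmaalln} together with \eqref{corocoro110}–\eqref{corocoro111}. Recalling the remark preceding the statement, I would only treat $\tau\in(0,\tau_1^{(1)})$, split into the three sub–ranges of \eqref{51}, and in each case isolate the one sharp inequality, reading off the rest from the geometry of $\mathcal{E}$.

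First I would extract the exact Pohozaev relations by letting $f_i$ be as in \eqref{219} and passing to the limit $r\to\infty$ in \eqref{223}, which yields
\[
\tfrac12\beta_1\bigl(\beta_1-4(N+1)\bigr)=\tau\int_0^\infty f_2\,df_1,
\qquad
\tfrac12\beta_2(\beta_2-4)=\tau\int_0^\infty f_1\,df_2 ,
\]
whose sum reproduces \eqref{33}. These identities already give the crude bounds $\beta_2\le 4+2\tau\beta_1$ and $\beta_1\le 4(N+1)+2\tau\beta_2$, but one checks they are saturated only in the fully concentrated regime and are not sharp: combining them only forces $\beta_2\le\beta_2^*/(1-4\tau^2)>\beta_2^*$. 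Indeed the target bound $\beta_2\le\beta_2^*$ is equivalent to $\int_0^\infty\bigl(f_1-4(N+1)\bigr)\,df_2\le0$, which requires that the mass $df_2=s\,e^{v_2}\,ds$ be carried where $f_1$ is small. This is a statement about the relative concentration scales of $v_1$ and $v_2$, so the integral identities alone cannot close the argument.

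The core of the proof therefore rests on the blow–up analysis of Theorem \ref{teoB}. I would argue by contradiction: assuming the relevant one–sided bound fails, set $S:=\sup_\alpha\beta_2(\alpha)>\beta_2^*$ and pick $\alpha_n$ with $\beta_2(\alpha_n)\to S$, normalised so that $v_{2,n}(0)=0$. If $\alpha_n\to+\infty$ (resp. $-\infty$), Theorem \ref{teoB} forces $S=\beta_2^*$ (resp. $S=\beta_2^-<\beta_2^*$), both impossible. Hence $\alpha_n$ stays bounded and the extremal flux is realised by a genuine finite–energy solution. On this solution I would run the concentration–scale comparison of Lemma \ref{lem2} together with the tail estimate \eqref{6147bis}–\eqref{6150}, or, when $\beta_{2,\tau}>2$, the Kelvin–transform argument \eqref{kelvin}–\eqref{x}, to show that the super/sub–level structure of $v_2$ at infinity forces the effective Dirac mass of the $-\tau|x|^{2N}e^{v_1}$ term to saturate at $8\pi(N+1)\tau$; by the Liouville classification \eqref{6134} and Theorem \ref{teobt} this is compatible only with $\beta_2=\beta_2^*$ attained \emph{in the limit}, contradicting its being attained at a finite solution. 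Cases $(ii)$ and $(iii)$ are handled identically with $\beta_i^*$ replaced by $\beta_i^{**}$.

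The hard part will be exactly this last step: transferring a bound that a priori controls only the limits $\lim_{\alpha\to\pm\infty}\beta_i(\alpha)$ into a bound valid for every finite $\alpha$, i.e. excluding an interior extremum of $\alpha\mapsto\beta_i(\alpha)$ overshooting $\beta_i^*$ or $\beta_i^{**}$. Since the authors explicitly defer the full linear non–degeneracy of radial solutions, I would avoid differentiating in $\alpha$ and instead phrase the contradiction purely through the scales $R_{1,n},R_{2,n},\varepsilon_{1,n},\varepsilon_{2,n}$ of \eqref{6124}: a finite–$\alpha$ maximiser would be forced to have $R_{1,n}/R_{2,n}\to0$ with a genuine second bubble for $v_1$, which the uniform estimates \eqref{6120}–\eqref{6121} and the Alexandrov–Bol bound (Theorem \ref{ab}, applicable since $-\Delta v_2\le e^{v_2}$) rule out.
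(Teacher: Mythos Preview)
Your proposal has a genuine gap at the step you yourself flag as ``the hard part''. The blow--up machinery of Theorem \ref{teoB}, Lemma \ref{lem2}, Theorem \ref{teobt}, and the tail/Kelvin estimates all analyse \emph{sequences} $(v_{1,n},v_{2,n})$ with $\alpha_n\to\pm\infty$; they say nothing about a single fixed solution. Once you have shown that a putative extremiser of $\beta_2(\alpha)$ above $\beta_2^*$ must occur at some finite $\alpha_0$, you are left with one ordinary solution $(v_1(\cdot,\alpha_0),v_2(\cdot,\alpha_0))$: there is no sequence, no scales $R_{i,n},\varepsilon_{i,n}$ tending anywhere, and no bubble to which Theorem \ref{teobt} could be applied. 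Your sentence ``a finite--$\alpha$ maximiser would be forced to have $R_{1,n}/R_{2,n}\to 0$ with a genuine second bubble'' conflates a static object with a blowing--up family. Continuous dependence on the initial datum shows that any sequence $\alpha_n\to\alpha_0$ converges smoothly and cannot produce concentration. So the argument, as written, never rules out an interior overshoot of the curve $\alpha\mapsto\beta_i(\alpha)$, and Theorem \ref{teoC} does not follow from Theorem \ref{teoB}.

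The paper's proof is of a completely different nature: it is a direct ODE/monotonicity argument valid for \emph{every} radial solution, independent of Theorem \ref{teoB}. After the change of variable $r=e^t$, one sets $f(t)=\int_{-\infty}^t e^{2(N+1)s+z(s)}ds$, $g(t)=\int_{-\infty}^t e^{2s+u(s)}ds$ and introduces two explicit auxiliary functions $R_0(t),R_1(t)$ (built from $f,g$ and the exponentials) whose derivatives factor as $(1-2\tau)$ times a term of definite sign. For $\tau\in(0,\tfrac12)$ one assumes $\beta_1\ge\beta_1^*$, shows that the function $H(t)=e^{2(N+1)t+z}+\tfrac12 f(f-\beta_1^*)$ has a unique zero $t_0$, and evaluates $R_1(t_0)-R_0(t_0)$ in two ways: the integral of its derivative gives a strictly positive value, while direct substitution (using the Pohozaev identity \eqref{newffyughoioyiohkkjkgihjlkhjkjgj} and $H(t_0)=0$) gives a non--positive value. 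This contradiction yields $\beta_1<\beta_1^*$. Parts $(ii)$ and $(iii)$ use the same $R_0,R_1$ but with the sign of $(1-2\tau)$ reversed and the endpoint values $R_i(+\infty)$ compared instead. The key input you were missing is precisely this pair of monotone quantities; the blow--up analysis plays no role in the necessity statement.
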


\proof It is convenient to introduce the following change of variable: $r=e^{t}$, and so consider  the functions:
\begin{equation}\label{52}
z(t):=v_1(e^{t})\quad\text{and}\quad u(t):=v_2(e^{t}), \;\;t\in\mathbb{R};
\end{equation}
satisfying:
\begin{equation}\label{53}
\begin{cases}
\frac{d^2z}{dt^2}+e^{2(N+1)t+z}-\tau e^{2t+u}=0
\quad\quad \text{for}\;\;t\in\mathbb{R} \\
\frac{d^2u}{dt^2}+e^{2t+u}-\tau e^{2(N+1)t+z}=0
\quad\quad \text{for}\;\;t\in\mathbb{R}\\
\frac{d z}{dt}(-\infty)=\frac{d
u}{dt}(-\infty)=0,\quad z(-\infty)\in\mathbb{R},\quad u(-\infty)\in\mathbb{R} \\
\int_{\mathbb{R}}e^{2(N+1)t+z}dt =\beta_1\quad\text{and}\quad
\int_{\mathbb{R}}e^{2t+u}dt =\beta_2.
\end{cases}
\end{equation}

In analogy to \eqref{220} we set: 
\begin{equation}\label{newffyugihyh}
f(t):=\int_{-\infty}^{t}e^{2(N+1)s+z(s)}ds\quad\text{and}\quad
g(t):=\int_{-\infty}^{t}e^{2s+u(s)}ds,
\end{equation}
so that, $f(t)$ and $g(t)$ define positive and strictly increasing
functions, with $f(-\infty)=g(-\infty)=0$ and 
$f(+\infty)=\beta_1$, $g(+\infty)=\beta_2.$ Furthermore,  problem \eqref{53} can be written equivalently as follows: 

\begin{equation}\label{Eq:1.9newhoioihiujkfiuugjhoihoijh}
\begin{cases}
-\frac{dz}{dt}=f(t)-\tau g(t)
\quad\quad \text{for}\;\;t\in\mathbb{R} \\
-\frac{du}{dt}=g(t)-\tau f(t)\quad\quad \text{for}\;\;t\in\mathbb{R}.
\end{cases}
\end{equation}

Moreover, by setting:
\begin{multline}\label{newffyughoioyiohkkjkgihjlkhj}
\Psi_0(t):=e^{2(N+1)t+z(t)}+e^{2t+u(t)} - 2(N+1)f(t)-2g(t)+\frac{1}{2}f^2(t)+\frac{1}{2}g^2(t)-\tau
f(t)g(t)\;\;\forall t\in\mathbb{R},
\end{multline}
\begin{equation}\label{newffyughoioyiohkkjkgihjlkhjhlkjkgh}
\Psi_1(t):=e^{2(N+1)t+z(t)}-2(N+1)f(t)+\frac{1}{2}f^2(t)\quad\forall
t\in\mathbb{R},
\end{equation}
and
\begin{equation}\label{newffyughoioyiohkkjkgihjlkhjjkhjk}
\Psi_2(t):=e^{2t+u(t)}-2g(t)+\frac{1}{2}g^2(t)\quad\forall
t\in\mathbb{R},
\end{equation}
then the conditions \eqref{222} and \eqref{223} can be expressed simply as follows:
\begin{equation}\label{Eq:1.9newhoioihiujkfiuugjhoihoijhjojuoihuyg}
\Psi_0(t)=0,\quad\Psi_1(t)>0\quad\text{and}\quad
\Psi_2(t)>0\quad\forall t\in(-\infty,+\infty].
\end{equation}
Also notice that, by virtue of  \eqref{newffyughoioyiohkkjkgihjlkhj} and
\eqref{Eq:1.9newhoioihiujkfiuugjhoihoijhjojuoihuyg}, we can derive the following identities:
\begin{multline}\label{newffyughoioyiohkkjkgihjlkhjkjgj}
0=e^{2(N+1)t+z(t)}+e^{2t+u(t)}+\frac{1}{2}f(t)\Big(f(t)-4(N+1)\Big)+\frac{1}{2}g(t)\Big(g(t)-4\Big)-\tau
f(t)g(t)\\
=e^{2(N+1)t+z(t)}+\frac{1}{2}f(t)\Big(f(t)-\big(4(N+1)+8\tau\big)\Big)+\frac{1}{2}\Big(g(t)-2\tau
f(t)\Big)\Big(g(t)-4\Big)+e^{2t+u(t)}, \quad\forall t\in\mathbb{R}.
\end{multline}

At this point, we introduce the function:
\begin{multline}\label{newffyughoioyiohkkjkgihjlkhjhiijlkjiohhiuhghfgyhgcghfjjgghhkjklhhjk}
R_{0}(t)= 2\tau e^{2(N+1)t+z(t)}\Big( g(t)-4\Big)+
\Big(f(t)-4(N+1)\Big)\bigg(e^{2(N+1)t+z(t)}+\frac{1}{2}f(t)\Big(f(t)-\big(4(N+1)+8\tau\big)\Big)\bigg)
\;\forall t\in\mathbb{R},
\end{multline}
and by using \eqref{Eq:1.9newhoioihiujkfiuugjhoihoijh}, after straightforward calculations we can check that,
\begin{equation}\label{newffyughoioyiohkkjkgihjlkhjhiiljklhhoiiuiuygvufggfgfytjkghgghlhhhkhlhh}
\frac{dR_{0}}{dt}(t)= -(1-2\tau)e^{2(N+1)t+z(t)}\bigg(\frac{1}{2}(2\tau+1)g(t)\Big(g(t)-4\Big)+e^{2t+u(t)}\bigg),\\
\;\forall t\in\mathbb{R}.
\end{equation}
Similarly for the function:
\begin{equation}\label{newffyughoioyiohkkjkgihjlkhjhiijlkjiohhiuhnewjhkklhhgkujghjhkjjhhlghjlhkjhkhj}
R_{1}(t)=2\tau e^{2t+u(t)}\left(
f(t)-(4(N+1)+8\tau)\right)\\+g(t)\left(e^{2t+u(t)}+\frac{1}{2}(g(t)-4)(g(t)-
2\tau(4(N+1)+8\tau))\right) \;\forall t\in\mathbb{R},
\end{equation}
we have:
\begin{equation}\label{newffyughoioyiohkkjkgihjlkhjhiiljklhhoiiuiuynewhjklhjhjghjhjhjflhkjhlkhj}
\frac{dR_{1}}{dt}(t)= -(1-2\tau)e^{2t+u(t)}\left\{
e^{2(N+1)t+z(t)}+\frac{1}{2}(1+2\tau) f(t)\left(f(t)-
(8\tau+4(N+1))\right)\right\}.
\end{equation}
In particular  by \eqref{newffyughoioyiohkkjkgihjlkhjhiiljklhhoiiuiuygvufggfgfytjkghgghlhhhkhlhh}
and \eqref{newffyughoioyiohkkjkgihjlkhjhiiljklhhoiiuiuynewhjklhjhjghjhjhjflhkjhlkhj} we obtain:

\begin{multline}\label{newffyughoioyiohkkjkgihjlkhjhiiljklhhoiiuiuynewhjklhjhjghjhjhjfoiyiugghjjlkhjhlhhhhjjkjhhh}
\frac{dR_{1}}{dt}(t)-\frac{dR_{0}}{dt}(t)=
(1-2\tau)e^{2(N+1)t+z(t)}\bigg(\frac{1}{2}(2\tau+1)g(t)\Big(g(t)-4)\Big)+e^{2t+u(t)}\bigg)\\
-(1-2\tau)e^{2t+u(t)}\left\{
e^{2(N+1)t+z(t)}+\frac{1}{2}(1+2\tau) f(t)\left(f(t)-
(8\tau+4(N+1))\right)\right\}\\
=(1-4\tau^2)\Bigg\{e^{2(N+1)t+z(t)}\left(\frac{1}{2}g(t)(g(t)-4)+e^{2t+u(t)}\right)\\
-e^{2t+u(t)}\left( e^{2(N+1)t+z(t)}+\frac{1}{2} f(t)(f(t)-
(8\tau+4(N+1))) \right) \Bigg\}, \;\forall t\in\mathbb{R}.
\end{multline}

We start to analyse the case where: $0< \tau<1/2,$ and by contradiction we assume that,
$$\beta_1\geq  \beta_1^{*} = 4N+1) + 8\tau.$$
Then, for the function:
\begin{equation}\label{kyhhlkjljlojgfgjhfjkhhj}
H(t):=e^{2(N+1)t+z(t)}+\frac{1}{2} f(t)\Big(f(t)-
\big(4(N+1)+ 8\tau \big)\Big) = e^{2(N+1)t+z(t)}+\frac{1}{2} f(t)(f(t)-\beta^{*}_1)
\end{equation}
we find that, 
\begin{equation}\label{kyhhlkjljlojgfgjhfjkhhjgngn}
H(+\infty) > 0 \ \text { and } \quad H'(t) = \tau e^{2(N+1)t+z(t)}(g(t)- 4).
\end{equation}
Therefore, if we let $t_1\in\mathbb{R}$ the unique value such that: $g(t_1)=4,$ we see that $H(t)$ is decreasing for  $t< t_1,$ and in particular  $H(t)<0,$
for every $t\leq t_1$.\\ 
While $H(t)$ is increasing for $t> t_1,$ and since $H(\infty) > 0$,  we find a unique $t_0 > t_1$ where the function $H$ vanishes. More precisely,
\begin{equation}\label{kyhhlkjljlojgfgjhfityuhk}
H(t_0)=0 \quad \text{ and in particular: } \quad f(t_0) < \beta^{*}_1=4(N+1) + 8\tau,   \quad  g(t_0)>4,
\end{equation}
and moreover,
\begin{equation}\label{kyhhlkjljlojgfgjhfjkhhjhlhkjkgjk}
H(t):=e^{2(N+1)t+z(t)}+\frac{1}{2} f(t)\Big(f(t)-
\big(4(N+1)+8\tau\big)\Big) < 0, \quad\forall t < t_0.
\end{equation}

As a consequence of \eqref{newffyughoioyiohkkjkgihjlkhjhiiljklhhoiiuiuynewhjklhjhjghjhjhjfoiyiugghjjlkhjhlhhhhjjkjhhh}, \eqref{Eq:1.9newhoioihiujkfiuugjhoihoijhjojuoihuyg} 
and \eqref{kyhhlkjljlojgfgjhfjkhhjhlhkjkgjk} we derive:
\begin{multline}\label{kyhhlkjljlojgfgjhfjkgjgkhjhjgfjh}
R_1(t_0)-R_0(t_0)=
(1-4\tau^2)\int\limits_{-\infty}^{t_0}\Bigg\{e^{2(N+1)t+z(t)}\bigg(\frac{1}{2}g(t)\Big(g(t)-4)\Big)+e^{2t+u(t)}\bigg)\\
-e^{2t+u(t)} \left( e^{2(N+1)t+z(t)}+\frac{1}{2} f(t)(f(t)-
(8\tau+4(N+1))) \right)\Bigg\}dt=\\
(1-4\tau^2)\int\limits_{-\infty}^{t_0}\bigg\{e^{2(N+1)t+z(t)}\Psi_2(t)
-e^{2t+u(t)}H(t)\bigg\}dt>0.
\end{multline}

On the other hand, by \eqref{newffyughoioyiohkkjkgihjlkhjhiijlkjiohhiuhghfgyhgcghfjjgghhkjklhhjk}, and \eqref{kyhhlkjljlojgfgjhfityuhk}, we see that,
\begin{equation}\label{newffyughoioyiohkkjkgihjlkhjhiijlkjiohhiuhghfgyhgcghfjjgghhkjklhhjkghjhlkk}
R_{0}(t_0):= 2\tau e^{2(N+1)t_0+z(t_0)}\Big(
g(t_0)-4\Big) > 0,
\end{equation}
while by \eqref{newffyughoioyiohkkjkgihjlkhjhiijlkjiohhiuhnewjhkklhhgkujghjhkjjhhlghjlhkjhkhj},
\eqref{newffyughoioyiohkkjkgihjlkhjkjgj} and
\eqref{kyhhlkjljlojgfgjhfityuhk} we drive:

\begin{multline}\label{newffyughoioyiohkkjkgihjlkhjhiijlkjiohhiuhnewjhkklhhgkujghjhkjjhhlghjlhkjhkhjhmghghj}
R_{1}(t_0)\leq
g(t_0)\left(e^{2t_0+u(t_0)}+\frac{1}{2}(g(t_0)-4)(g(t_0)-
2\tau(4(N+1)+8\tau))\right)\\ \leq
g(t_0)\left(e^{2t_0+u(t_0)}+\frac{1}{2}(g(t_0)-4)(g(t_0)-
2\tau
f(t_0))\right)=\\-g(t_0)\left(e^{2(N+1)t_0+z(t_0)}+\frac{1}{2}f(t_0)(f(t_0)-(4(N+1)+8\tau))\right)=0
\end{multline}
and so, 
$$R_{1}(t_0)-R_{0}(t_0)\leq 0,$$
in contradiction with \eqref{kyhhlkjljlojgfgjhfjkgjgkhjhjgfjh}. Similarly, for $0< \tau<1/2$ we can check in an analogous way that $\beta_2 < \beta^{*}_2$ and $(i)$ is established.\\

Next, we check $(ii)$, so we assume that: $\tau \in (\frac{1}{2}, \tau^{(2)}_1)$ and we proceed to show that: $\beta_2 < \beta^{**}_2.$ 
To this purpose, as above, we argue by contradiction and suppose that,
\begin{equation}\label{5*}
\beta_2 \geq \beta^{**}_2. 
\end{equation}
Therefore from, \eqref{corocoro112} we have:
\begin{equation}\label{5**}
\beta_{1}=\varphi^{+}_{2}(\beta_{2}) \leq \varphi^{+}_{2}(\beta^{**}_{2}) = \beta^{*}_{1},
\end{equation}
and so in this case, $H(+\infty) \leq 0.$ Thus, for the function $H$ in \eqref{kyhhlkjljlojgfgjhfjkhhj}  now we have that,
$$H(t) < 0 \quad\forall t\in\mathbb{R}.$$ 
At this point, by recalling that $\tau > \frac{1}{2}$, from \eqref{newffyughoioyiohkkjkgihjlkhjhiiljklhhoiiuiuynewhjklhjhjghjhjhjfoiyiugghjjlkhjhlhhhhjjkjhhh} we obtain:
\begin{equation}\label{5bis}
\frac{dR_{1}}{dt}(t)-\frac{dR_{0}}{dt}(t)=(1-4\tau^2)\bigg\{e^{2(N+1)t+z(t)}\Psi_2(t)
-e^{2t+u(t)}H(t)\bigg\} < 0.
\end{equation}
On the other hand, under the given assumption, we see that,
\begin{equation}\label{5bis*}
R_1(+\infty) - R_0(+\infty)=\frac{1}{2}\beta_2(\beta_2 - 4)(\beta_2 - \beta^{**}_{2}) - \frac{1}{2}\beta_1(\beta_1 - 4(N+1))(\beta_1 - \beta^{*}_{1}) \geq 0,  
\end {equation}
and, $R_1(-\infty) = R_0(-\infty)= 0.$ Therefore in view of \eqref{5bis} we reach again a contradiction, and $(ii)$ is established\\ 
Finally, we can check easily that  $(iii)$ follows by similar arguments, and  the proof is completed.

\qed

At this point,  Theorem\ref{teoA} readily follows as a consequence of Theorem \ref{teoB}, which takes care about the "sufficient" part, while Theorem \ref{teoC}  provides the proof of the "necessary" condition.


\end{document}